\numberwithin{equation}{section}
\theoremstyle{plain} 
\newtheorem{theorem}{Theorem}[section]
\newtheorem{corollary}[theorem]{Corollary}
\newtheorem{proposition}[theorem]{Proposition}
\theoremstyle{definition} 
\theoremstyle{definition} 
\newtheorem{ex}[theorem]{Example}
\newtheorem*{ex*}{Example}
\theoremstyle{remark} 
\theoremstyle{remark} 
\newtheorem*{remark*}{Remark}
\def\subsubsubsection{\@startsection{subsubsubsection}{4}{\z@}{-3.25ex plus -1ex minus -.2ex}{1.5ex plus .2ex}{\normalsize}}
\newcommand{\beqa}{\begin{eqnarray}}
\newcommand{\eeqa}{\end{eqnarray}}
\newcommand{\bseq}{\begin{subequations}}
\newcommand{\eseq}{\end{subequations}}
\newcommand{\dd}{\partial}
\newcommand{\var}{{\,\operatorname{VaR}}}
\newcommand{\cvar}{{\,\operatorname{CVaR}}}
\newcommand{\id}{{\,\operatorname{id}}}
\newcommand{\opt}{{\,\operatorname{opt}}}
\renewcommand{\dd}{{\,\operatorname{d}}}
\newcommand{\supp}{\operatorname{supp}}
\newcommand{\amin}{\operatorname{arg\,min}\limits_{t\in\R}B_\al(X;p)(t)}
\renewcommand{\amin}{\mathop{\operatorname{argmin}}\limits_{t\in\R}B_\al(X;p)(t)}
\newcommand{\ql}{{}_{\al-1}Q(X;p)}
\newcommand{\qlbe}{{}_{\be-1}Q(X;p)}
\newcommand{\qlz}{{}_0Q(X;p)}
\newcommand{\pl}[1]{{}_{#1-1}P(X;t)}
\newcommand{\al}{\alpha}
\newcommand{\Ga}{\Gamma}
\newcommand{\si}{\sigma}
\newcommand{\ka}{\kappa}
\newcommand{\la}{\lambda}
\newcommand{\ga}{\gamma}
\newcommand{\de}{\delta}
\newcommand{\be}{\beta}
\newcommand{\La}{\Lambda}
\renewcommand{\th}{\theta}
\renewcommand{\Psi}{\overline{\Phi}}
\newcommand{\alle}{\overset{\al+1}{\le}}
\newcommand{\st}{\mathrel{\overset{\mathrm{st}}{\le}}}
\newcommand{\sst}{\mathrel{\overset{\mathrm{st}}{<}}}
\newcommand{\LL}{\mathcal{L}}
\newcommand{\XX}{\mathcal{X}}
\newcommand{\ii}[1]{\,\mathsf{I}\{#1\}} 
\newcommand{\D}[1]{D^{(#1)}}  
\renewcommand{\D}{\mathrel{\overset{\mathrm{D}}\to}}
\renewcommand{\D}[1]{\mathrel{\underset{#1}{\overset{\mathrm{D}}\longrightarrow}}}
\renewcommand{\P}{\operatorname{\mathsf{P}}} 
\newcommand{\E}{\operatorname{\mathsf{E}}}
\newcommand{\R}{\mathbb{R}}
\newcommand{\N}{\mathbb{N}}
\renewcommand{\H}[1]{\mathcal{H}^{#1}}
\newcommand{\vp}{\varepsilon}
\newcommand{\tA}{{\tilde{A}}}
\newcommand{\tp}{{\tilde{p}}}
\newcommand{\tX}{{\tilde{X}}}
\newcommand{\tY}{{\tilde{Y}}}
\newcommand{\M}{{\mathrm{M}}}
\renewcommand{\le}{\leqslant}
\renewcommand{\ge}{\geqslant}
\renewcommand{\leq}{\leqslant}
\newcommand{\li}[1]{{{#1}^*}^{-1}}
  \newcommand{\fJ}{\operatorname{\raisebox{.8pt}{\fbox{\scriptsize \mathsf{H}}}}}
 \renewcommand{\fJ}{\mathrel{\raisebox{.0pt}{\fbox{\scriptsize H}}}}
 \renewcommand{\fJ}{\mathrel{\raisebox{.0pt}{\fbox{\scriptsize $\operatorname{H}$}}}}
 \renewcommand{\fJ}{\mathrel{\raisebox{.0pt}{\fbox{\scriptsize $\operatorname{\mathsf{H}}$}}}}
\begin{document}

\begin{frontmatter}

\title{An optimal three-way stable and monotonic spectrum of bounds on 
quantiles: \\ 
a spectrum of coherent measures of financial risk and economic inequality  
}
\runtitle{Bounds on quantiles}

%

\begin{aug}
\author{\fnms{Iosif} \snm{Pinelis}
}
\runauthor{Iosif Pinelis}


\address{Department of Mathematical Sciences\\
Michigan Technological University\\
Houghton, Michigan 49931, USA\\
E-mail: \printead[ipinelis@mtu.edu]{e1}}
\end{aug}

\begin{abstract}
A certain spectrum $\big(P_\al(X;x)\big)_{\al\in[0,\infty]}$ of upper bounds on the tail probability $\P(X\ge x)$, with $P_0(X;x)=\P(X\ge x)$ and $P_\infty(X;x)$ being the best possible exponential upper bound on $\P(X\ge x)$, is 
shown to be stable and monotonic in $\al$, $x$, and $X$, where $x$ is a real number and $X$ is a random variable.  
The bounds $P_\al(X;x)$ are optimal values in certain minimization problems. 
The corresponding spectrum $\big(Q_\al(X;p)\big)_{\al\in[0,\infty]}$ of upper bounds on the $(1-p)$-quantile of $X$ is shown as well to be stable and monotonic in $\al$, $p$, and $X$, with $Q_0(X;p)$ equal the largest $(1-p)$-quantile of $X$. 
In fact, $P_\al(X;x)$ and $Q_\al(X;p)$ are nondecreasing in $X$ with respect to the stochastic dominance of any order $\ga\in[1,\al+1]$. 
It is shown that for small enough values of $p$ the quantile bounds $Q_\al(X;p)$ are close enough to the true quantiles $Q_0(X;p)$ provided that the right tail of the distribution of $X$ is light enough and regular enough, depending on $\al$. 
Moreover, it is shown that the quantile bounds $Q_\al(X;p)$ 
possess the crucial property of the subadditivity in $X$ if $\al\in[1,\infty]$, as well as the 
positive homogeneity and translation invariance properties, and thus constitute a continuous spectrum of so-called coherent measures of risk. 
A number of other useful properties of the bounds $P_\al(X;x)$ and $Q_\al(X;p)$ are established. In particular, it is shown that, quite similarly to the bounds $P_\al(X;x)$ on the tail probabilities, the quantile bounds $Q_\al(X;p)$ are the optimal values in certain minimization problems. 
This allows for a comparatively easy incorporation of the bounds $P_\al(X;x)$ and $Q_\al(X;p)$ into more specialized optimization problems, with additional restrictions, say on the distribution of the random variable $X$. 
It is shown that the mentioned minimization problems for which $P_\al(X;x)$ and $Q_\al(X;p)$ are the optimal values are in a certain sense dual to each other; in the special case $\al=\infty$ this corresponds to the bilinear Legendre--Fenchel duality. 
In finance, the $(1-p)$-quantile $Q_0(X;p)$ is known as the value-at-risk (VaR), whereas the value of $Q_1(X;p)$ is known as the conditional value-at-risk (CVaR) and also as  
the expected shortfall (ES), average value-at-risk (AVaR), and expected tail loss (ETL). 
Also in the present paper, a short proof of the well-known Rockafellar--Uryasev--Pflug theorem that VaR is a minimizer in the Rockafellar--Uryasev variational representation of CVaR is provided. More generally, the minimizers in the variational representation of $Q_\al(X;p)$ are described in detail for any $\al\in[1,\infty)$.   
A generalization of the Cillo--Delquie necessary and sufficient condition for the so-called mean-risk (M-R) 
to be nondecreasing with respect to the stochastic dominance of order $1$ is presented, with a short proof. 
Moreover, a necessary and sufficient condition for the M-R measure  
to be coherent is given. 
It is shown that the quantile bounds $Q_\al(X;p)$ can be used as measures of economic inequality. 
The spectrum parameter $\al$ may be considered an index of sensitivity: the greater is the value of $\al$, the greater is the sensitivity of the function $Q_\al(\cdot;p)$ to risk/inequality. 
The problems of effective computation of $P_\al(X;x)$ and $Q_\al(X;p)$ are considered. 
\end{abstract}

  
%

\setattribute{keyword}{AMS}{AMS 2010 subject classifications:}

\begin{keyword}[class=AMS]
\kwd[Primary ]{52A41}
\kwd{60E15}
\kwd{26A51}
\kwd{26B25}
\kwd{91B30}
\kwd{91B82}
\kwd[; secondary ]{60E15}
\kwd{90C25}
\kwd{90C26}
\kwd{49J45}
\kwd{49J55}
\kwd{49K30}
\kwd{49K40}
\kwd{39B62}
\end{keyword}









\begin{keyword}
\kwd{probability inequalities}
\kwd{extremal problems}
\kwd{tail probabilities}
\kwd{quantiles}
\kwd{coherent measures of risk}
\kwd{measures of economic inequality}
\kwd{value-at-risk (VaR)}
\kwd{conditional value-at-risk (CVaR)}
\kwd{expected shortfall (ES)}
\kwd{average value-at-risk (AVaR)}
\kwd{expected tail loss (ETL)}
\kwd{mean-risk (M-R)}
\kwd{Gini's mean difference}
\kwd{stochastic dominance}
\kwd{stochastic orders}
\end{keyword}

\end{frontmatter}

\settocdepth{chapter}

\tableofcontents 

\settocdepth{subsubsection}

\theoremstyle{plain} 




\section{An optimal three-way stable and three-way monotonic spectrum of upper bounds on tail probabilities}\label{tails} 


Consider the family $(h_\al)_{\al\in[0,\infty]}$ of functions $h_\al\colon\R\to\R$ given by the formula 
\begin{equation}\label{eq:f}
	h_\al(u):=
	\left\{
	\begin{alignedat}{2}
	&\ii{u\ge0}&&\text{\ \ if }\al=0, \\ 
	&(1+u/\al)_+^\al&&\text{\ \ if }0<\al<\infty, \\ 
	&e^u&&\text{\ \ if }\al=\infty 
	\end{alignedat}
	\right.
\end{equation}
for all $u\in\R$. 
Here, as usual, $\ii{\cdot}$ denotes the indicator function, $u_+:=0\vee u$ and $u_+^\al:=(u_+)^\al$ for all real $u$. 

Obviously, the function $h_\al$ is nonnegative and nondecreasing for each $\al\in[0,\infty]$,  and it is also continuous for each $\al\in(0,\infty]$. 
Moreover, it is easy to see that, for each $u\in\R$, 
\begin{equation}\label{eq:f incr,cont}
	h_\al(u) \text{ is nondecreasing and continuous in }\al\in[0,\infty].  
\end{equation}

Next, let us use the functions $h_\al$ as generalized moment functions and thus introduce the generalized moments 
\begin{equation}\label{eq:A new}
	A_\al(X;x)(\la):=\E h_\al\big(\la(X-x)\big).   
\end{equation}
Here and in what follows, unless otherwise specified, $X$ is any random variable (r.v.), $x\in\R$, $\al\in[0,\infty]$, and $\la\in(0,\infty)$. 
Since $h_\al\ge0$, the expectation in \eqref{eq:A new} is always defined, but may take the value $\infty$. 
It may be noted that in the particular case $\al=0$ one has 
\begin{equation}\label{eq:A_0}
	A_0(X;x)(\la)=\P(X\ge x), 
\end{equation}
which does not actually depend on $\la\in(0,\infty)$. 

Now one can introduce the expressions 
\begin{equation}\label{eq:P new}
P_\al(X;x):=\inf_{\la\in(0,\infty)}A_\al(X;x)(\la)
=\left\{
	\begin{alignedat}{2}
	&\P(X\ge x)&&\text{\ \ if }\al=0, \\ 
	&\inf_{\la\in(0,\infty)}\E\big(1+\la(X-x)/\al)_+^\al&&\text{\ \ if }0<\al<\infty, \\ 
	&\inf_{\la\in(0,\infty)}\E e^{\la(X-x)}&&\text{\ \ if }\al=\infty.  
	\end{alignedat}
\right. 
\end{equation}
By \eqref{eq:f incr,cont}, $A_\al(X;x)(\la)$ and $P_\al(X;x)$ are nondecreasing in $\al\in[0,\infty]$.  
In particular, 
\begin{equation}\label{eq:0<al}
	P_0(X;x)=\P(X\ge x)\le P_\al(X;x).   
\end{equation}
It will be shown later (see Proposition~\ref{prop:P cont in al}) that $P_\al(X;x)$ also largely inherits the property of $h_\al(u)$ of being continuous in $\al\in[0,\infty]$. 

The definition \eqref{eq:P new} can be rewritten as  
\begin{equation}\label{eq:P old}
P_\al(X;x)=\inf_{t\in T_\al}\tA_\al(X;x)(t),
\end{equation}
where 
\begin{align}
T_\al&:=
\left\{
\begin{alignedat}{2}
&\R &&\text{\ \ if $\al\in[0,\infty)$,}\\ 
&(0,\infty) &&\text{\ \ if $\al=\infty$}  
\end{alignedat}
\right. \label{eq:T_al}\\
\intertext{and} 
\tA_\al(X;x)(t)&:=
\left\{
\begin{alignedat}{2}
&\frac{\E(X-t)_+^\al}{(x-t)_+^\al} && \text{ if } \al\in[0,\infty), \\ 
&\E e^{(X-x)/t} && \text{ if } \al=\infty;    
\end{alignedat}
\right. \label{eq:A old} 
\end{align}
here and subsequently, we also use the conventions $0^0:=0$ and $\frac a0:=\infty$ for all $a\in[0,\infty]$. 
The alternative representation \eqref{eq:P old} of $P_\al(X;x)$ follows because (i) $A_\al(X;x)(\la)=\tA_\al(X;x)(x-\al/\la)$ for $\al\in(0,\infty)$, 
(ii) $A_\infty(X;x)(\la)=\tA_\infty(X;x)(1/\la)$, and (iii) $P_0(X;x)=
\P(X\ge x) 
=\inf_{t\in(-\infty,x)}\P(X>t)=\inf_{t\in(-\infty,x)}\tA_0(X;x)(t)$. 

In view of \eqref{eq:P old}, one can see (cf.\ \cite[Corollary~2.3]{pin98}) that, for each $\al\in[0,\infty]$, $P_\al(X;x)$ is the optimal (that is, least possible) upper bound on the tail probability $\P(X\ge x)$ given the generalized moments $\E g_{\al;t}(X)$ for all $t\in T_\al$, where 
\begin{equation}\label{eq:g_al,t}
g_{\al;t}(u):=
\left\{
\begin{alignedat}{2}
&(u-t)_+^\al&&\text{ if }\al\in[0,\infty), \\ 
&e^{u/t}&&\text{ if }\al=\infty.  
\end{alignedat}
\right.
\end{equation}
In fact (cf.\ e.g.\ \cite[Proposition 3.3]{pin-hoeff}), the bound $P_\al(X;x)$ remains optimal given the larger class of generalized moments $\E g(X)$ for all functions $g\in\H\al$, where 
\begin{equation}\label{eq:H}
\H\al:=\big\{g\in\R^\R\colon
g(u)=\textstyle{\int\nolimits_\R} g_{\al;t}(u)\,\mu(dt)
\text{ for some $\mu\in\M_\al$ and all $u\in\R$}
\big\}, 
\end{equation}
$\M_\al$ denotes the set of all nonnegative Borel measures on $T_\al$, and, as usual, $\R^\R$ stands for the set of all real-valued functions on $\R$. 
By \cite[Proposition 1(ii)]{pin99} and \cite[Proposition~3.4]{pin-hoeff}, 
\begin{equation}\label{eq:F-al-beta}
0\le\al<\be\le\infty\quad\text{implies}\quad\H\al\supseteq\H\be.
\end{equation}
This provides the other way to come to the mentioned conclusion that  
\begin{equation}\label{eq:mono in al}
	P_\al(X;x)\text{ is nondecreasing in }\al\in[0,\infty]. 
\end{equation}

By \cite[Proposition~1.1]{binom}, the class $\H\al$ of generalized moment functions can be characterized as follows in the case when $\al$ is a natural number: for any $g\in\R^\R$, one has $g\in\H\al$ if and only if $g$ has finite derivatives $g^{(0)}:=g,g^{(1)}:=g',\dots,g^{(\al-1)}$ on $\R$ such that $g^{(\al-1)}$ is convex on $\R$ and $\lim_{x\to-\infty}g^{(j)}(x)=0$ for $j=0,1,\dots,\al-1$. 
Also, by \cite[Proposition~3.4]{pin-hoeff}, $g\in\H\infty$ if and only if $g$ is infinitely differentiable 
on $\R$, and $g^{(j)}\ge0$ on $\R$ and $\lim_{x\to-\infty}g^{(j)}(x)=0$ for all $j = 0,1,\dots$. 

Thus, the greater the value of $\al$, the narrower and easier to deal with is the class $\H\al$ and the smoother are the functions comprising $\H\al$. However, the greater the value of $\al$, the farther away is the bound $P_\al(X;x)$ from the true tail probability $\P(X\ge x)$. 

Of the bounds $P_\al(X;x)$, the loosest and easiest one to get is $P_\infty(X;x)$, the so-called exponential upper bound on the tail probability $\P(X\ge x)$. It is used very widely, in particular when $X$ is the sum of independent r.v.'s $X_i$, in which case one can rely on the factorization 
$A_\al(X;x)(\la)=e^{-\la x}\prod_i\E e^{\la X_i}$. 
A bound very similar to $P_3(X;x)$ was introduced in \cite{eaton2} in the case when $X$ the sum of independent bounded r.v.'s; see also \cite{T2original,dufour-hallin,T2}. 
For any $\al\in(0,\infty)$, the bound $P_\al(X;x)$ is a special case of a more general bound given in \cite[Corollary~2.3]{pin98}; see also \cite[Theorem~2.5]{pin98}. For some of the further developments in this direction see \cite{pin99,bent-liet02,bent-ap,normal,asymm,bent-64pp,pin-hoeff}. 
The papers mentioned in this paragraph used the representation \eqref{eq:P old} of $P_\al(X;x)$, rather than the new representation \eqref{eq:P new}. 
The new representation appears, not only of more unifying form, but also more convenient as far as such properties of $P_\al(X;x)$ as the monotonicity in $\al$ and the continuity in $\al$ and in $X$ are concerned; cf.\ \eqref{eq:f incr,cont} and the proofs of Propositions~\ref{prop:P cont in al} and \ref{prop:P cont in X}; those proofs, as well as proofs of most of the other statements in this paper, are given in Appendix~\ref{proofs}. 
Yet another advantage of the representation \eqref{eq:P new} is that, for $\al\in[1,\infty)$, the function $A_\al(X;x)(\cdot)$ inherits the convexity property of $h_\al$, which facilitates the minimization of $A_\al(X;x)(\la)$ in $\la$, as needed to find $P_\al(X;x)$ by \eqref{eq:P new}; relevant details on the remaining ``difficult case'' $\al\in(0,1)$ can be found in Section~\ref{P comput}. 

On the other hand, the ``old'' representation \eqref{eq:P old} of $P_\al(X;x)$ is more instrumental in establishing the mentioned connection with the classes 
$\H\al$ of generalized moment functions; in proving part~\eqref{it:conc} of Proposition~\ref{prop:P,+}; and in discovering and 
proving Theorem~\ref{prop:Q=}.

\begin{center}***\end{center}

Some of the more elementary properties of $P_\al(X;x)$ are presented in 

\begin{proposition}\label{prop:P} \ 
\begin{enumerate}[(i)]
	\item $P_\al(X;x)$ is nonincreasing in $x\in\R$. 
	\item If $\al\in(0,\infty)$ and $\E X_+^\al=\infty$, then $P_\al(X;x)=\infty$ for all $x\in\R$. 
	\item If $\al=\infty$ and $\E e^{\la X}=\infty$ for all real $\la>0$, then $P_\infty(X;x)=\infty$ for all $x\in\R$. 
	\item If $\al\in(0,\infty)$ and $\E X_+^\al<\infty$, then $P_\al(X;x)\to1$ as $x\to-\infty$ and $P_\al(X;x)\to0$ as $x\to\infty$, so that $0\le P_\al(X;x)\le1$ for all $x\in\R$. 
	\item If $\al=\infty$ and $\E e^{\la_0 X}<\infty$ for some real $\la_0>0$, then $P_\al(X;x)\to1$ as $x\to-\infty$ and $P_\al(X;x)\to0$ as $x\to\infty$, so that $0\le P_\al(X;x)\le1$ for all $x\in\R$. 
\end{enumerate}
\end{proposition}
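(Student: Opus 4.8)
The plan is to prove the five parts of Proposition~\ref{prop:P} mostly by direct analysis of the defining formula \eqref{eq:P new}, exploiting the explicit monotonicity of $h_\al$ and standard convergence theorems. Parts (i), (ii)/(iii), and (iv)/(v) are somewhat independent, so I would treat them in that order, handling the $\al\in(0,\infty)$ and $\al=\infty$ cases in parallel where the arguments are analogous.

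For part (i), monotonicity in $x$, I would work directly from \eqref{eq:P new}. For fixed $\la\in(0,\infty)$, the map $x\mapsto A_\al(X;x)(\la)=\E h_\al(\la(X-x))$ is nonincreasing in $x$, since $h_\al$ is nondecreasing and $\la>0$ (for $\al=\infty$ this reads $\E e^{\la(X-x)}$, manifestly decreasing in $x$; for $\al\in(0,\infty)$, $(1+\la(X-x)/\al)_+^\al$ is pointwise nonincreasing in $x$). Taking the infimum over $\la$ preserves this: the infimum of a family of nonincreasing functions is nonincreasing. So (i) follows with essentially no computation.

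For parts (ii) and (iii), I would argue by a lower bound forcing $A_\al(X;x)(\la)=\infty$ for every $\la$. In case (ii), if $\E X_+^\al=\infty$, then for any fixed $\la>0$ and any $x$, the integrand $(1+\la(X-x)/\al)_+^\al$ grows like $(\la/\al)^\al X_+^\al$ for large $X$, so $\E(1+\la(X-x)/\al)_+^\al=\infty$ by comparison (the difference from a multiple of $X_+^\al$ is dominated by lower-order terms with finite expectation on the event $X$ large, while the part where $X$ is bounded contributes a finite amount); hence the infimum over $\la$ is $\infty$. Case (iii) is similar and even cleaner: $\E e^{\la(X-x)}=e^{-\la x}\E e^{\la X}=\infty$ for every $\la>0$, so $P_\infty(X;x)=\inf_\la\infty=\infty$.

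For parts (iv) and (v), the $x\to\infty$ and $x\to-\infty$ limits are the substantive content. I expect the $x\to\infty$ direction (limit $0$) to be the main obstacle, because one must exhibit a choice of $\la=\la(x)$, or pass to a limit under the infimum, making $A_\al(X;x)(\la)$ small; a clean route is to bound $P_\al(X;x)\le A_\al(X;x)(\la)$ for a single well-chosen fixed $\la$ and let $x\to\infty$. In case (iv), with $\E X_+^\al<\infty$, fix any $\la>0$ and apply dominated convergence: as $x\to\infty$ the integrand $(1+\la(X-x)/\al)_+^\al\to0$ pointwise and is dominated by an integrable bound (for $x\ge0$ one has $(1+\la(X-x)/\al)_+^\al\le(1+\la X_+/\al)^\al$, integrable since $\E X_+^\al<\infty$), giving $A_\al(X;x)(\la)\to0$, hence $P_\al(X;x)\to0$; in case (v) the same argument uses the single exponent $\la_0$ with $\E e^{\la_0 X}<\infty$ as the dominating envelope. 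For the $x\to-\infty$ limit (limit $1$), the lower bound $P_\al(X;x)\ge P_0(X;x)=\P(X\ge x)\to1$ from \eqref{eq:0<al} handles one side; for the matching upper bound I would use that $h_\al(u)\le 1$ for $u\le0$ together with, say, $A_\al(X;x)(\la)=\E h_\al(\la(X-x))$ split over $\{X\ge x\}$ and $\{X<x\}$, and choose $\la=\la(x)\to0^+$ as $x\to-\infty$ so that $\la(X-x)\to0$ on the bulk of the mass, making $\E h_\al(\la(X-x))\to1$ by dominated convergence. This last step, engineering $\la(x)\to0$ at the right rate so that the dominating function stays integrable while the integrand tends to the constant $1$, is where the care is needed; the bound $0\le P_\al\le1$ then follows from part (i) and the two limits.
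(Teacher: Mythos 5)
Your proposal is correct and follows essentially the same route as the paper: part (i) from the pointwise monotonicity of each $A_\al(X;x)(\la)$ in $x$, parts (ii)--(iii) by forcing the generalized moment to be infinite for every value of the dual parameter, the limit $0$ at $+\infty$ from a single well-chosen test value, and the limit $1$ at $-\infty$ by squeezing between $\P(X\ge x)$ and an upper bound of $1$. The only differences are presentational: the paper obtains its explicit bounds from the $t$-parametrized form \eqref{eq:P old}--\eqref{eq:A old} \big(e.g.\ $P_\al(X;x)\le\E X_+^\al/x_+^\al$ with $t=0$, and $P_\infty(X;x)\le e^{-\la_0x}\E e^{\la_0X}$\big) rather than from dominated convergence in $\la$, and for the upper bound near $-\infty$ it simply notes that $P_\al(X;x)\le\lim_{\la\downarrow0}A_\al(X;x)(\la)=h_\al(0)=1$ for every fixed $x$, which yields the same conclusion without your rate-matching of $\la(x)\to0$ against $x\to-\infty$.
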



In view of Proposition~\ref{prop:P}, it will be henceforth assumed by default that the tail bounds $P_\al(X;x)$ -- as well as the quantile bounds $Q_\al(X;p)$, to be introduced in Section~\ref{quantiles}, and also the corresponding expressions $A_\al(X;x)(\la)$, $\tA_\al(X;x)(t)$, $B_\al(X;p)(t)$, and $\ql$ as in \eqref{eq:A new}, \eqref{eq:A old}, \eqref{eq:B}, and \eqref{eq:ql} -- are defined and considered only for r.v.'s $X\in\XX_\al$  (unless indicated otherwise), where 
\begin{equation*}
	\XX_\al:=
	\left\{
	\begin{alignedat}{2}
	&\XX&&\text{\ \ if }\al=0, \\ 
	&\big\{X\in\XX\colon\E X_+^\al<\infty\big\}&&\text{\ \ if }\al\in(0,\infty), \\ 
	&\big\{X\in\XX\colon\La_X\ne\emptyset\big\}&&\text{\ \ if }\al=\infty,  
	\end{alignedat}
	\right.
\end{equation*}
$\XX$ is the set of all real-valued r.v.'s on a given probability space (implicit in this paper), 
and  
\begin{equation}\label{eq:La_X}
	\La_X:=\big\{\la\in(0,\infty)\colon\E e^{\la X}<\infty\big\}.  
\end{equation}
Observe that the set $\XX_\al$ is a convex cone: for any $\th\in[0,\infty)$ and any $X$ and $Y$ in $\XX_\al$, the r.v.'s $\th X$ and $X+Y$ are in $\XX_\al$. Indeed, the conclusion that $\th X\in\XX_\al$ for any $\th\in[0,\infty)$ and $X\in\XX_\al$ is obvious. Concerning the conclusion that $X+Y\in\XX_\al$ for any $X$ and $Y$ in $\XX_\al$, use the inequalities $\E(X+Y)_+^\al\le\E(X_++Y_+)^\al\le\E X_+^\al+\E Y_+^\al$ if $\al\in(0,1]$, Minkowski's inequality $\|X_++Y_+\|_\al\le\|X_+\|_\al+\|Y_+\|_\al$ if $\al\in[1,\infty)$, and H\"older's inequality $\E e^{\la(X+Y)}\le\big(\E e^{p\la X}\big)^{1/p} 
\big(\E e^{q\la X}\big)^{1/q}$ for any positive $p$ and $q$ such that $\frac1p+\frac1q=1$. 
Here, as usual, $\|Z\|_\al:=(\E|Z|^\al)^{1/\al}$, the $\LL^\al$-norm of a r.v.\ $Z$ -- which is actually a norm if and only if $\al\ge1$. 
Also, it is obvious that the cone $\XX_\al$ contains all real constants. 

It follows 
from Proposition~\ref{prop:P} and \eqref{eq:0<al} that 
\begin{equation}\label{eq:P decr in x}
		\text{$P_\al(X;x)$ is nonincreasing in $x\in\R$, 
		with $P_\al(X;(-\infty)+)=1$ and $P_\al(X;\infty-)=0$.}
\end{equation}
Here, as usual, $f(a+)$ an $f(a-)$ denote the right and left limits of $f$ at $a$. 

One can say more in this respect. 
To do that, introduce 
\begin{equation}\label{eq:x_*,p_*}
	x_*:=x_{*,X}:=\sup\supp X\quad\text{and}\quad p_*:=p_{*,X}:=\P(X=x_*). 
\end{equation}
Here, as usual, $\supp X$ denotes the support set of (the distribution of the r.v.) $X$; speaking somewhat loosely, $x_*$ is the maximum value taken by the r.v.\ $X$, and $p_*$ is the probability with which this value is taken. It is of course possible that $x_*=\infty$, in which case necessarily $p_*=0$, since the r.v.\ $X$ was assumed to be 
real-valued. 


Introduce also 
\begin{equation}\label{eq:x_al}
	x_\al:=x_{\al,X}:=\inf E_\al(1),
\end{equation}
where 
\begin{equation}\label{eq:J_al}
	E_\al(p):=E_{\al,X}(p):=\{x\in\R\colon P_\al(X;x)<p\}. 
\end{equation}
Recall that, according to the standard convention, for any subset $E$ of $\R$, $\inf E=\infty$ if and only if $E=\emptyset$. 

Now one can state 


\begin{proposition}\label{prop:P,+} \ 
\begin{enumerate}[(i)]
	\item\label{it:x ge x_*} 
	For all $x\in[x_*,\infty)$ one has 
$
	P_\al(X;x)=P_0(X;x)=\P(X\ge x)=\P(X=x)=p_*\ii{x=x_*}. 
$
	\item\label{it:x<x_*} For all $x\in(-\infty,x_*)$ one has 
$P_\al(X;x)>0$. 
	\item\label{it:conc} The function $(-\infty,x_*]\cap\R\ni x\mapsto P_\al(X;x)^{-1/\al}$ is continuous and convex if $\al\in(0,\infty)$; 
we use the conventions $0^{-a}:=\infty$  and $\infty^{-a}:=0$ for all real $a>0$; 
concerning the continuity of functions with values in the set $[0,\infty]$, 
we use the natural topology on this set.  	
Also, the function $(-\infty,x_*]\cap\R\ni x\mapsto-\ln P_\infty(X;x)$ is continuous and convex, with the convention $\ln0:=-\infty$. 
	\item\label{it:cont} 
If $\al\in(0,\infty]$ then the function 
$(-\infty,x_*]\cap\R\ni x\mapsto P_\al(X;x)$ is continuous. 
	\item\label{it:left-cont} The function 
$\R\ni x\mapsto P_\al(X;x)$ is left-continuous. 
	\item\label{it:x_al incr} $x_\al$ is nondecreasing in $\al\in[0,\infty]$, and $x_\al<\infty$ for all $\al\in[0,\infty]$. 
	\item\label{it:EX}
If $\al\in[1,\infty]$ then $x_\al=\E X$; even 
for $X\in\XX_\al$, it is of course possible that $\E X=-\infty$, in which case $P_\al(X;x)<1$ for all real $x$. 
	\item\label{it:x_al<x_*} $x_\al\le x_*$, and $x_\al=x_*$ if and only if $p_*=1$. 
	\item\label{it:J_al} $E_\al(1)=(x_\al,\infty)\ne\emptyset$.  
	\item\label{it:P_al,x<x_al} $P_\al(X;x)=1$ for all $x\in(-\infty,x_\al]$.  
	\item\label{it:str decr} 
If $\al\in(0,\infty]$ then $P_\al(X;x)$ is strictly decreasing in $x\in[x_\al,x_*]\cap\R$.   	
\end{enumerate}  
\end{proposition}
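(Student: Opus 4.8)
The plan is to build everything on the ``old'' representation \eqref{eq:P old}, $P_\al(X;x)=\inf_{t\in T_\al}\tA_\al(X;x)(t)$, together with the two elementary bounds $P_\al(X;x)\ge P_0(X;x)=\P(X\ge x)$ from \eqref{eq:0<al} and $P_\al(X;x)\le1$ from Proposition~\ref{prop:P}. The structural heart is part~\eqref{it:conc}; once it is in place, parts~\eqref{it:cont}, \eqref{it:left-cont}, and \eqref{it:str decr} follow by soft arguments, and the statements about $x_\al$ reduce to the monotonicity \eqref{eq:mono in al} in $\al$ plus left-continuity. So I would establish the parts roughly in their stated order, treating \eqref{it:conc} as the crux.

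For part~\eqref{it:x ge x_*}, fix $x\ge x_*$. Since $X\le x_*$ almost surely, in the ratio $\tA_\al(X;x)(t)=\E(X-t)_+^\al/(x-t)_+^\al$ I would let $t\uparrow x_*$ (for $\al\in(0,\infty)$), resp.\ let $t\downarrow0$ in $\E e^{(X-x)/t}$ (for $\al=\infty$); the integrands are bounded by $1$, so dominated convergence gives $\limsup_{}\le p_*\ii{x=x_*}$, which matches the lower bound $P_\al(X;x)\ge P_0(X;x)=p_*\ii{x=x_*}$. Part~\eqref{it:x<x_*} is immediate: for $x<x_*=\sup\supp X$ one has $P_\al(X;x)\ge\P(X\ge x)\ge\P(X>x)>0$.

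For part~\eqref{it:conc} I would pass to the transformed bound. For $\al\in(0,\infty)$, raising \eqref{eq:P old} to the power $-1/\al$ turns the infimum into a supremum,
\[
P_\al(X;x)^{-1/\al}=\sup_{t\in\R}\frac{(x-t)_+}{\|(X-t)_+\|_\al},
\]
a supremum of functions of $x$ each nonnegative and piecewise linear, hence convex; a supremum of convex functions is convex. For $\al=\infty$ the same device gives $-\ln P_\infty(X;x)=\sup_{\la>0}\big(\la x-\ln\E e^{\la X}\big)$, a supremum of affine functions, again convex. Convexity yields continuity on the interior $(-\infty,x_*)$ automatically. The one delicate point, and the step I expect to require the most care, is continuity from the left at the endpoint $x_*$: here I would sandwich, using $\limsup_{x\uparrow x_*}P_\al(X;x)\le\inf_t\tA_\al(X;x_*)(t)=P_\al(X;x_*)$ (fix $t$ and let $x\uparrow x_*$ in $\tA_\al$) against $\liminf_{x\uparrow x_*}P_\al(X;x)\ge\lim_{x\uparrow x_*}\P(X\ge x)=\P(X\ge x_*)=p_*$, the last equality by continuity from above of the measure. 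Part~\eqref{it:cont} then follows by composing the continuous convex $g:=P_\al(X;\cdot)^{-1/\al}$ (resp.\ $-\ln P_\infty(X;\cdot)$) with the continuous maps $p\mapsto p^{-\al}$ (resp.\ $u\mapsto e^{-u}$) under the stated conventions, and part~\eqref{it:left-cont} combines \eqref{it:cont} on $(-\infty,x_*]$ with the constancy $P_\al\equiv0$ on $(x_*,\infty)$ from \eqref{it:x ge x_*} (and, for $\al=0$, the standard left-continuity of $x\mapsto\P(X\ge x)$).

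For the statements about $x_\al$: monotonicity in part~\eqref{it:x_al incr} is immediate from $P_\al\le P_\be$ (so $E_\be(1)\subseteq E_\al(1)$ and the infima are ordered), and finiteness from $\tA_\al(X;x)(t)\to0$ as $x\to\infty$ for a fixed admissible $t$, which puts large $x$ into $E_\al(1)$. Part~\eqref{it:EX} is where convexity of $h_\al$ (for $\al\ge1$) enters through Jensen: $A_\al(X;x)(\la)\ge h_\al(\la(\E X-x))\ge1$ whenever $x\le\E X$, giving $P_\al(X;x)=1$; while for $x>\E X$ the one-sided derivative $\lim_{\la\downarrow0}\big(A_\al(X;x)(\la)-1\big)/\la=\E(X-x)<0$, justified by monotone/dominated convergence of the difference quotients $\big(h_\al(\la(X-x))-1\big)/\la$ (whose pointwise limit is $X-x$ since $h_\al'(0)=1$), forces $A_\al(X;x)(\la)<1$ for small $\la>0$ and hence $P_\al(X;x)<1$; the same derivative argument, being $-\infty$ when $\E X=-\infty$, shows $P_\al(X;x)<1$ for every $x$ in that case. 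For $\al=\infty$ I would replace Jensen by $\ln\E e^{\la X}\ge\la\E X$ and the derivative by the slope $\E X$ of $\la\mapsto\ln\E e^{\la X}$ at $0$. For part~\eqref{it:x_al<x_*}: $x>x_*$ gives $P_\al(X;x)=0<1$ by \eqref{it:x ge x_*}, so $x_\al\le x_*$; if $p_*=1$ then $P_\al\equiv1$ on $(-\infty,x_*]$ so $x_\al=x_*$, while if $p_*<1$ there is mass below $x_*$, so $P_\al(X;x)<1$ for some $x<x_*$ (using \eqref{it:conc}), giving $x_\al<x_*$. Since $P_\al$ is nonincreasing in $x$, $E_\al(1)$ is an up-set, hence a ray; for $x<x_\al$ we get $P_\al(X;x)=1$, and left-continuity \eqref{it:left-cont} forces $P_\al(X;x_\al)=1$, so $x_\al\notin E_\al(1)$, yielding parts~\eqref{it:J_al} and \eqref{it:P_al,x<x_al}. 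Finally, for part~\eqref{it:str decr}, the convex $g$ of \eqref{it:conc} equals its minimal value $1$ only on $(-\infty,x_\al]$: any flat stretch of $g$ to the right of $x_\al$ would, by convexity together with the flat left tail $g\equiv1$ on $(-\infty,x_\al]$, force $g\equiv1$ past $x_\al$, contradicting $x_\al=\inf E_\al(1)$; hence $g$ is strictly increasing on $[x_\al,x_*]$ and $P_\al(X;x)$ is strictly decreasing there.
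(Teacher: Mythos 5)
Your proposal is correct and follows essentially the same route as the paper: the supremum-of-convex-functions representation of $P_\al(X;x)^{-1/\al}$ (resp.\ $-\ln P_\infty(X;x)$) as the crux, the upper/lower-semicontinuity sandwich at $x_*$, left-continuity propagated to $x_\al$, and convexity forcing strict monotonicity; your use of $t\uparrow x_*$ in \eqref{eq:P old} for part~\eqref{it:x ge x_*} and your direct Jensen-plus-derivative argument for every $\al\in[1,\infty]$ in part~\eqref{it:EX} are just reparametrizations of the paper's $\la\to\infty$ and endpoint-plus-monotonicity-in-$\al$ arguments. The one spot needing a patch is part~\eqref{it:str decr} when $x_\al=-\infty$: there is then no ``flat left tail $g\equiv1$ on $(-\infty,x_\al]$'' to contradict, and you must instead use that $g\to1$ (resp.\ $\to0$) as $x\to-\infty$, since a flat stretch of a convex nondecreasing $g$ at level $c>1$ forces $g\ge c$ on the whole half-line to its left -- which is exactly how the paper phrases it.
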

 
This proposition will be useful when establishing continuity properties of the quantile bounds considered in Section~\ref{quantiles}. 
For $\al\in(1,\infty)$, parts \eqref{it:x ge x_*}, \eqref{it:cont}, \eqref{it:EX}, \eqref{it:P_al,x<x_al}, and \eqref{it:str decr} of Proposition~\ref{prop:P,+} are contained in \cite[Proposition~3.2]{pin-hoeff}.  

One may also note here that, by \eqref{eq:P decr in x} and part~\eqref{it:left-cont} of Proposition~\ref{prop:P,+}, the function 
$P_\al(X;\cdot)$ may be regarded as the tail function of some r.v.\ $Z_\al$: $P_\al(X;u)=\P(Z_\al\ge u)$ for all real $u$. 

\begin{ex}\label{ex:P,X_ab} 
Some parts of Propositions~\ref{prop:P} and \ref{prop:P,+} are illustrated in the following picture with graphs of the function $P_\al(X;\cdot)$ for various values of $\al$ in the important case when the r.v.\ $X$ takes only two values. Then, by the translation invariance property stated below in Theorem~\ref{th:P}, without loss of generality (w.l.o.g.) $\E X=0$. 
Thus, $X=X_{a,b}$, where $a$ and $b$ are positive real numbers and $X_{a,b}$ is a r.v.\ with the uniquely determined zero-mean distribution on the set $\{-a,b\}$. 

\vspace*{6pt}
\noindent
\begin{parbox}{.38\textwidth}
	{ \includegraphics[width=.36\textwidth]{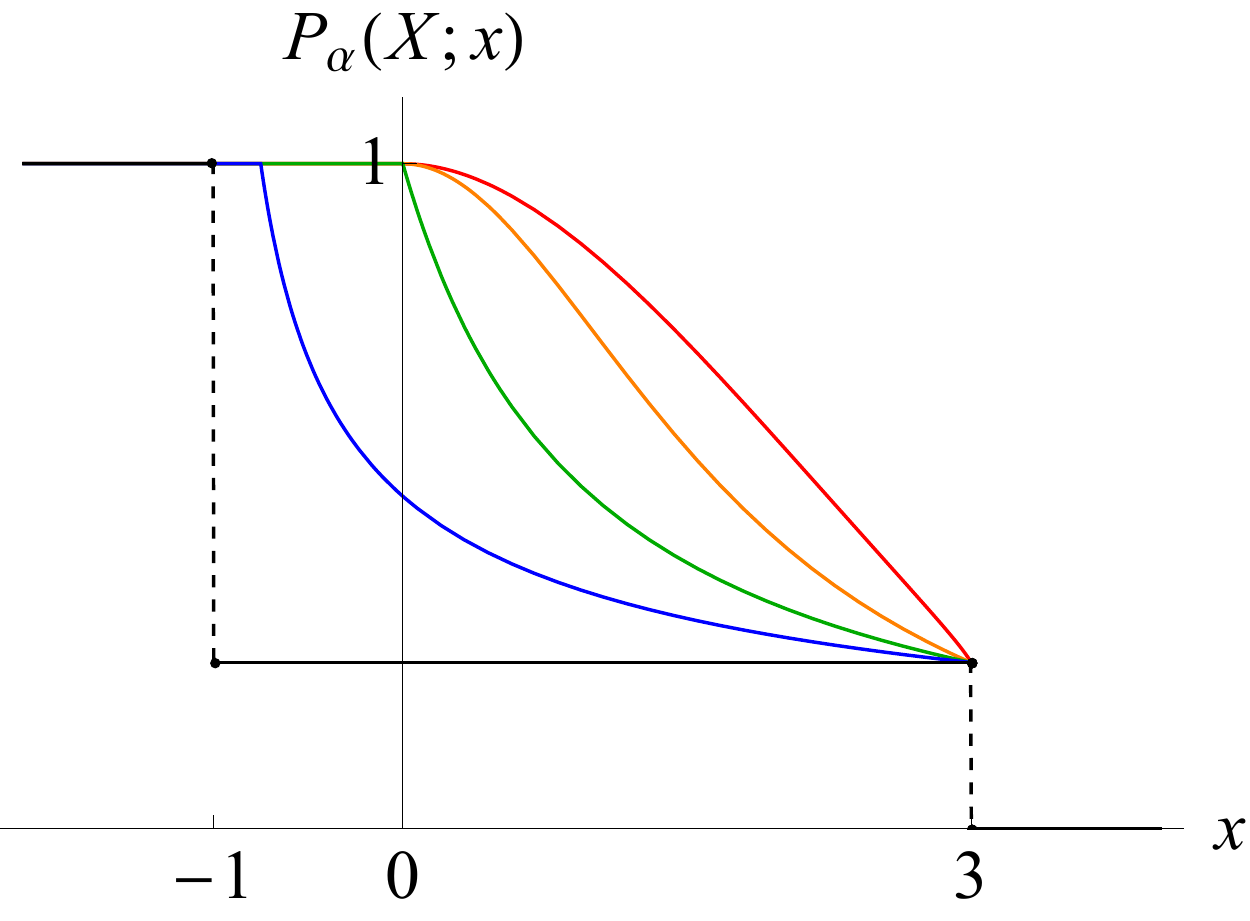} }
\end{parbox}
\begin{parbox}{.6\textwidth}
{
Explicit expressions for $P_\al(X_{a,b};x)$ were given in \cite[Section~3, Example]{pin-hoeff} in the cases $\al\in(1,\infty)$ and $\al=\infty$. 
In the case $\al\in(0,1]$, one can see that 
$P_\al(X_{a,b};x)$ equals $\frac a{a+b}\, (\frac{a+b}{a+x})^{\al }$ 
for $x\in[x_\al,x_*]$, $1$ for $x\in(-\infty,x_\al]$, and $0$ for $x\in(x_*,\infty)$, with $x_\al=a \big((\frac{a}{a+b})^{\frac{1}{\al }-1}-1\big)$ and $x_*=b$. 
Graphs $\{\big(x,P_\al(X_{a,b};x)\big)\colon -2\le x\le 4\}$ are shown here on the left, with 
$a=1$ and $b=3$, for values of $\al$ equal $0$ (black), $\frac12$ (blue), $1$ (green), $2$ (orange), and $\infty$ (red). 
In particular, here $x_{1/2}=-\frac34$. 
}
\end{parbox}
\end{ex}

\begin{proposition}\label{prop:P cont in al} \ 
$P_\al(X;x)$ is continuous in $\al\in[0,\infty]$ in the following sense: 
Suppose that 
$(\al_n)$ is any sequence in $[0,\infty)$ converging to $\al\in[0,\infty]$, with $\be:=\sup_n\al_n$ 
and $X\in\XX_\be$; 
then $P_{\al_n}(X;x)\to P_\al(X;x)$.   
\end{proposition}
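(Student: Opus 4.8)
The plan is to use the monotonicity of $P_\al(X;x)$ in $\al$ recorded in \eqref{eq:mono in al}, which reduces the claim to the two one-sided estimates
\[
\limsup_n P_{\al_n}(X;x)\le P_\al(X;x)\quad\text{and}\quad \liminf_n P_{\al_n}(X;x)\ge P_\al(X;x).
\]
Before starting, I would record three easy reductions. Since $\al_n\to\al$, the sequence is bounded iff $\al<\infty$, so $\be=\infty$ exactly when $\al=\infty$; the moment condition $X\in\XX_\be$ passes down to give $X\in\XX_\g$ for every $\g\in[0,\be]$ (Lyapunov's inequality, and $u^\g\le C_\g e^{\la_0 u}$ when $\be=\infty$), so all of $P_{\al_n}(X;x)$ and $P_\al(X;x)$ are defined and lie in $[0,1]$ by Proposition~\ref{prop:P}(iv),(v). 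Finally, if $x\ge x_*$ then $P_\al(X;x)=p_*\ii{x=x_*}$ is independent of $\al$ by Proposition~\ref{prop:P,+}\eqref{it:x ge x_*}, so I may assume $x<x_*$, i.e.\ $\P(X>x)>0$.

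For the upper estimate I would, given $\vp>0$, choose $\la^*$ with $A_\al(X;x)(\la^*)\le P_\al(X;x)+\vp$, taking $\la^*\in\La_X$ when $\al=\infty$ (legitimate since $A_\infty(X;x)(\la)=\infty$ off $\La_X$, so the infimum in \eqref{eq:P new} runs effectively over $\La_X$). By \eqref{eq:f incr,cont} one has $h_{\al_n}(\la^*(X-x))\to h_\al(\la^*(X-x))$ pointwise, dominated by $h_\be(\la^*(X-x))$, which is integrable because $X\in\XX_\be$ (with $\la^*\in\La_X$ when $\be=\infty$). Dominated convergence then gives $A_{\al_n}(X;x)(\la^*)\to A_\al(X;x)(\la^*)$, whence $\limsup_n P_{\al_n}(X;x)\le A_\al(X;x)(\la^*)\le P_\al(X;x)+\vp$; sending $\vp\se0$ finishes this half.

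The lower estimate is where the real work lies. For $\al=0$ it is immediate from $\al_n\ge0$ and \eqref{eq:mono in al}. For $\al\in(0,\infty]$ I would set $L:=\liminf_n P_{\al_n}(X;x)$ (assuming $L<\infty$, else there is nothing to prove), pass to a subsequence along which $P_{\al_n}(X;x)\to L$, pick near-minimizers $\la_n$ with $A_{\al_n}(X;x)(\la_n)\le P_{\al_n}(X;x)+1/n$, and then pass to a further subsequence with $\la_n\to\la_*\in[0,\infty]$; the point is to show $L\ge P_\al(X;x)$ in each of the three cases for $\la_*$. Fixing $\al_1\in(0,\al)$ and using that $h_\g(u)$ is nondecreasing in $\g$ gives $A_{\al_n}(X;x)(\la)\ge A_{\al_1}(X;x)(\la)$ for large $n$, and since $\P(X>x)>0$ the right-hand side tends to $\infty$ as $\la\to\infty$; this coercivity, uniform in $n$, rules out $\la_*=\infty$. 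If $\la_*\in(0,\infty)$, then joint continuity of $h_\g(u)$ in $(\g,u)\in(0,\infty]\times\R$ gives $h_{\al_n}(\la_n(X-x))\to h_\al(\la_*(X-x))$ a.s., and Fatou's lemma yields $L\ge A_\al(X;x)(\la_*)\ge P_\al(X;x)$. If $\la_*=0$, then $h_{\al_n}(\la_n(X-x))\to h_\al(0)=1$ a.s., and Fatou gives $L\ge1\ge P_\al(X;x)$.

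I expect the main obstacle to be exactly this last step: because the infimum defining $P_{\al_n}(X;x)$ need not be attained, the near-minimizers $\la_n$ could in principle run off to $0$ or $\infty$, wrecking a naive interchange of $\lim_n$ and $\inf_\la$. Compactifying the $\la$-range and splitting into cases is the device that controls this — the coercivity inherited from the single function $A_{\al_1}(X;x)(\cdot)$ kills escape to $\infty$, escape to $0$ is harmless since it only forces $L\ge1$, and the interior case is handled by the joint continuity of $h_\g$ together with Fatou's lemma. Combining the two one-sided estimates then gives $P_{\al_n}(X;x)\to P_\al(X;x)$.
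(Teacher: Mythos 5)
Your proof is correct, and the two halves split the same way as the paper's ($\limsup$ via pointwise convergence of $A_{\al_n}(X;x)(\la)$ by dominated convergence, which is identical), but your treatment of the harder $\liminf$ half is genuinely different. The paper first confines the infimum to a common compact interval $[0,\la^*]$ with $\la^*=\sup_n\la_{\max,\al_n}<\infty$ (the explicit threshold \eqref{eq:lamax} beyond which $A_\ga>1$), and then splits on the value of $\al$: for $\al\in(0,1]$ it proves equicontinuity of $\la\mapsto A_{\al_n}(X;x)(\la)$ uniformly in $n$ (inequality \eqref{eq:equi}) and invokes Arzel\`a--Ascoli to upgrade pointwise to uniform convergence, while for $\al\in[1,\infty]$ it uses convexity of $A_{\al_n}(X;x)(\cdot)$ in $\la$ together with a cited stability-of-infima result for convex functions. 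You instead run the direct method: extract near-minimizers $\la_n$, compactify to $\la_*\in[0,\infty]$, kill $\la_*=\infty$ by the coercive minorant $A_{\al_1}(X;x)(\cdot)$ (which plays exactly the role of the paper's $\la_{\max}$ bound, both resting on $\P(X>x)>0$), absorb $\la_*=0$ via $h_\al(0)=1\ge P_\al(X;x)$, and handle the interior case by joint continuity of $(\ga,u)\mapsto h_\ga(u)$ plus Fatou. What your route buys is uniformity in $\al\in(0,\infty]$ -- no case split at $\al=1$, no equicontinuity estimate, no external reference -- at the cost of the three-way analysis of $\la_*$; the paper's route buys uniform convergence of the whole functions $A_{\al_n}(X;x)(\cdot)$ on $[0,\la^*]$, which is a slightly stronger intermediate conclusion than convergence of their infima.
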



In view of parts (ii) and (iii) of Proposition~\ref{prop:P}, the 
condition 
$X\in\XX_\be$ in Proposition~\ref{prop:P cont in al} is essential. 


Let us now turn to the question of stability of $P_\al(X;x)$ with respect to (the distribution of) $X$. 
First here, recall that one of a number of mutually equivalent definitions of the convergence in distribution, $X_n\D{n\to\infty} X$, of a sequence of r.v.'s $X_n$ to a r.v.\ $X$ is the following:  
$\P(X_n\ge x)\underset{n\to\infty}\longrightarrow\P(X\ge x)$ for all real $x$ such that $\P(X=x)=0$. 

We shall also need the following uniform integrability condition: 
\begin{align}								&\text{$\sup_n\E(X_n)_+^\al\ii{X_n>N}\underset{N\to\infty}\longrightarrow0$ if $\al\in(0,\infty)$,} 	\label{it:int,al<infty} \\  
	&\text{$\sup_n\E e^{\la X_n}\ii{X_n>N}\underset{N\to\infty}\longrightarrow0$ for each $\la\in\La_X$ if $\al=\infty$}.  	\label{it:int,al=infty} 
\end{align}
 
\begin{proposition}\label{prop:P cont in X} \ 
Suppose that $\al\in(0,\infty]$. 
Then
$P_\al(X;x)$ is continuous in $X$ in the following sense. 
Take any sequence $(X_n)_{n\in\N}$ of real-valued r.v.'s such that $X_n\D{n\to\infty} X$ and 
the uniform integrability condition \eqref{it:int,al<infty}--\eqref{it:int,al=infty} is satisfied. 
Then one has the following. 
\begin{enumerate}[(i)]
	\item\label{it:P, n->infty} 
The convergence 	
\begin{equation}\label{eq:P, n->infty}
	P_\al(X_n;x)\underset{n\to\infty}\longrightarrow P_\al(X;x)  
\end{equation}  
takes place for all real $x\ne x_*$, where $x_*=x_{*,X}$ as in \eqref{eq:x_*,p_*}; thus, by parts \eqref{it:x ge x_*} and \eqref{it:cont} of Proposition~\ref{prop:P,+}, \eqref{eq:P, n->infty} holds for all real $x$ that are points of continuity of the function $P_\al(X;\,\cdot)$. 
	\item\label{it:P,x_*} 
The convergence \eqref{eq:P, n->infty} holds for $x=x_*$ as well provided that $\P(X_n=x_*)\underset{n\to\infty}\longrightarrow\P(X=x_*)$. 
In particular, \eqref{eq:P, n->infty} holds for $x=x_*$ if $\P(X=x_*)=0$.   
\end{enumerate} 
\end{proposition}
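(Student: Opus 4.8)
The plan is to establish the two one-sided estimates $\limsup_n P_\al(X_n;x)\le P_\al(X;x)$ and $\liminf_n P_\al(X_n;x)\ge P_\al(X;x)$ separately, the second being the \emph{main obstacle}. Note first that the hypothesis \eqref{it:int,al<infty}--\eqref{it:int,al=infty} forces $\sup_n\E(X_n)_+^\al<\infty$ when $\al\in(0,\infty)$, and $\La_X\subseteq\La_{X_n}$ for every $n$ when $\al=\infty$ (choose $N$ so large that the relevant supremum in \eqref{it:int,al<infty}--\eqref{it:int,al=infty} is $<1$); hence each $X_n\in\XX_\al$ and all the quantities below are well defined.

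For the upper bound the key step is the pointwise convergence $A_\al(X_n;x)(\la)\to A_\al(X;x)(\la)$ for each fixed $\la\in(0,\infty)$ (each $\la\in\La_X$ when $\al=\infty$). To obtain it I would set $g(u):=h_\al(\la(u-x))$, which is nonnegative, continuous, and vanishing as $u\to-\infty$, and insert a continuous cutoff $\phi_N$ that equals $1$ on $(-\infty,N]$, equals $0$ on $[N+1,\infty)$, and is affine in between. Since $g\phi_N$ is bounded and continuous, $\E g(X_n)\phi_N(X_n)\to\E g(X)\phi_N(X)$ by $X_n\D{n\to\infty}X$. The discarded tail $\E g(X_n)\ii{X_n>N}$ is, for $N$ large, at most $C\,\E(X_n)_+^\al\ii{X_n>N}$ when $\al<\infty$ and equal to $e^{-\la x}\,\E e^{\la X_n}\ii{X_n>N}$ when $\al=\infty$, hence uniformly small in $n$ by precisely \eqref{it:int,al<infty}--\eqref{it:int,al=infty}; the matching tail for $X$ is small by dominated convergence, since $A_\al(X;x)(\la)<\infty$. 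A routine $\vp/3$ argument then yields the pointwise convergence, and taking the infimum over $\la$ gives $\limsup_n P_\al(X_n;x)\le A_\al(X;x)(\la)$ for every admissible $\la$, whence $\limsup_n P_\al(X_n;x)\le P_\al(X;x)$.

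For the lower bound, pointwise convergence of $A_\al(X_n;x)(\cdot)$ does not by itself control the infima, so I would pass to a Skorokhod representation, replacing $X_n,X$ by $Y_n,Y$ with $Y_n\to Y$ a.s.\ and the same laws. Since for $x>x_*$ one has $P_\al(X;x)=0$ and the bound is trivial, assume $x<x_*$, so that $\P(Y>x)=\P(X>x)>0$. Put $L:=\liminf_n P_\al(X_n;x)$; pass to a subsequence along which $P_\al(X_n;x)\to L$, choose $\la_n$ with $A_\al(X_n;x)(\la_n)\le P_\al(X_n;x)+1/n$ (so $A_\al(X_n;x)(\la_n)\to L$), and extract a further subsequence with $\la_n\to\la_*\in[0,\infty]$. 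If $\la_*\in(0,\infty)$, then $h_\al(\la_n(Y_n-x))\to h_\al(\la_*(Y-x))$ a.s.\ by continuity, so Fatou's lemma gives $A_\al(X;x)(\la_*)\le L$ and hence $P_\al(X;x)\le L$. If $\la_*=0$, then $h_\al(\la_n(Y_n-x))\to h_\al(0)=1$ a.s., so Fatou gives $L\ge1\ge P_\al(X;x)$ by Proposition~\ref{prop:P}. If $\la_*=\infty$, then $h_\al(\la_n(Y_n-x))\to\infty$ on $\{Y>x\}$, and since $\P(Y>x)>0$ Fatou gives $L=\infty$. In each case $L\ge P_\al(X;x)$, which together with the upper bound proves part~\eqref{it:P, n->infty}.

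For part~\eqref{it:P,x_*} the only new issue is the lower bound at the atom $x=x_*$, where the case $\la_*=\infty$ degenerates because $\P(X>x_*)=0$. Here I would instead use the elementary inequality $P_\al(X_n;x_*)\ge\P(X_n\ge x_*)$ from \eqref{eq:0<al}: the Portmanteau theorem applied to the closed set $[x_*,\infty)$ gives $\limsup_n\P(X_n\ge x_*)\le\P(X\ge x_*)=p_*$, while the added hypothesis $\P(X_n=x_*)\to p_*$ together with $\P(X_n\ge x_*)\ge\P(X_n=x_*)$ forces $\liminf_n\P(X_n\ge x_*)\ge p_*$; thus $\P(X_n\ge x_*)\to p_*$ and $\liminf_n P_\al(X_n;x_*)\ge p_*=P_\al(X;x_*)$. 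The matching upper bound $\limsup_n P_\al(X_n;x_*)\le p_*$ follows from the same pointwise-convergence argument as in part~\eqref{it:P, n->infty}, which uses no continuity-point assumption, and the proof is complete.
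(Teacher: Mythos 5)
Your proof is correct, and for the key step it takes a genuinely different route from the paper's. The upper bound $\limsup_n P_\al(X_n;x)\le P_\al(X;x)$ and the treatment of part~(\ref{it:P,x_*}) coincide with the paper's argument (the paper simply invokes a standard theorem on convergence of expectations under uniform integrability for the pointwise convergence $A_\al(X_n;x)(\la)\to A_\al(X;x)(\la)$ that you re-derive by truncation). The divergence is in the lower bound for $x<x_*$: the paper first brackets the minimization, showing as in \eqref{eq:P,tA,lamax} that the infimum over $\la$ may be taken over the fixed compact interval $[0,\sup_n\la_{\max,\al,X_n}]$, and then converts pointwise convergence of $A_\al(X_n;x)(\cdot)$ into convergence of the infima --- via the equicontinuity estimate \eqref{eq:equi} and the Arzel\`a--Ascoli theorem when $\al\in(0,1]$, and via convexity in $\la$ together with a cited stability result for infima of convex functions when $\al\in[1,\infty]$. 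You instead pick near-minimizers $\la_n$, pass to a subsequence with $\la_n\to\la_*$ in the compact set $[0,\infty]$, and settle the three cases by Skorokhod representation and Fatou's lemma, correctly using $\P(X>x)>0$ (valid precisely because $x<x_*$) to force $L=\infty$ when $\la_*=\infty$, and $P_\al(X;x)\le 1=h_\al(0)$ (Proposition~\ref{prop:P}, parts (iv)--(v)) when $\la_*=0$. Your route is more self-contained --- no Arzel\`a--Ascoli, no external lemma on infima of convex functions, and no case split on $\al$ --- whereas the paper's approach yields the stronger intermediate fact of uniform convergence of $A_\al(X_n;x)(\cdot)$ on the bracketing interval. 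The only cosmetic omission is the final clause of part~(\ref{it:P,x_*}): one should add that $\P(X=x_*)=0$ forces $\P(X_n=x_*)\to 0$ by the portmanteau theorem, so the main clause applies.
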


Note that in the case $\al=0$ the convergence \eqref{eq:P, n->infty} may fail to hold, not only for $x=x_*$, but for all real $x$ such that $\P(X=x)>0$. 

\begin{center}***\end{center}

Let us now discuss matters of monotonicity of $P_\al(X;x)$ in $X$, with respect to various orders on the mentioned set $\XX$ of all real-valued r.v.'s $X$. 
Using the family of function classes $\H\al$, defined by \eqref{eq:H}, one can introduce a family of stochastic orders, say $\alle$, on the set $\XX$ 
by the formula 
\begin{equation*}
	X\alle Y\overset{\text{def}}\iff \E g(X)\le\E g(Y)\text{ for all }g\in\H\al, 
\end{equation*}
where $\al\in[0,\infty]$ and $X$ and $Y$ are in $\XX$. 
To avoid using the term ``order'' with two different meanings in one phrase, let us refer to the relation $\alle$ as the \emph{stochastic dominance of order $\al+1$}, rather than the stochastic order of order $\al+1$. 
In view of \eqref{eq:H}, it is clear that 
\begin{equation}\label{eq:alle}
		X\alle Y\iff \E g_{\al;t}(X)\le\E g_{\al;t}(Y)\text{ for all }t\in T_\al,   
\end{equation}
so that, in the case when $\al=m-1$ for some natural number $m$, the order $\alle$ coincides with the ``$m$-increasing-convex'' order $\leq_{m-\text{icx}}$ as defined e.g.\ in \cite[page~206]{shaked-shanti}. 
In particular, 
\begin{equation}\label{eq:st}
\begin{aligned}
	X\overset1\le Y&\iff \E g(X)\le\E g(Y)\text{ for all nonnegative nondecreasing functions }   g\colon\R\to\R \\ 
	&\iff \P(X>t)\le\P(Y>t)\text{ for all }t\in\R \\ 
		&\iff \P(X\ge t)\le\P(Y\ge t)\text{ for all }t\in\R
	\iff X\st Y, 
\end{aligned}	
\end{equation}
where $\st$ denotes the usual stochastic dominance of order $1$, and 
\begin{equation}\label{eq:le2}
\begin{aligned}
	X\overset2\le Y&\iff \E g(X)\le\E g(Y)\text{ for all nonnegative nondecreasing convex functions } g\colon\R\to\R \\ 
	&\iff \E(X-t)_+\le\E(Y-t)_+\text{ for all }t\in\R, 
\end{aligned}	
\end{equation}
so that $\overset2\le$ coincides with the usual stochastic dominance of order $2$. 
Also, 
\begin{equation}\label{eq:st iff}
	\text{$X\st Y$ iff $X_1\le Y_1$ for some r.v.'s $X_1$ and $Y_1$ which are copies in distribution of $X$ and $Y$,}
\end{equation}
respectively.  

By \eqref{eq:F-al-beta}, the orders $\alle$ are graded in the sense that 
\begin{equation}\label{eq:le-al-beta}
\text{if $X\alle Y$ for some $\al\in[0,\infty]$, then $X\overset{\be+1}\le Y$ for all $\be\in[\al,\infty]$.} 
\end{equation}

A stochastic order, which is a ``mirror image'' of the order $\alle$, but only for nonnegative r.v.'s, 
was presented by Fishburn in \cite{fishburn80}; note \cite[Theorem~2]{fishburn80} on the relation with a ``bounded'' version of this order, previously introduced and studied in \cite{fishburn76}. 
Denoting the corresponding 
Fishburn \cite{fishburn80} order by $\le_{\al+1}$, one has 
\begin{equation}\label{eq:fishburn}
X\le_{\al+1}Y\iff (-Y)\alle(-X),    
\end{equation}
for nonnegative r.v.'s $X$ and $Y$. 
However, as shown in this paper (recall Proposition~\ref{prop:P}) the condition of the nonnegativity of the r.v.'s is not essential; without it, one can either deal with infinite expected values or, alternatively, require that they be finite. 
The case when $\al$ is an integer was considered, in a different form, in \cite{atkinson08}. 

One may also consider 
the order $\le_\al^{-1}$ 
defined by the condition that $X\le_\al^{-1} Y$ if and only if $X$ and $Y$ are nonnegative r.v.'s and 
$F_X^{(-\al)}(p)\le F_Y^{(-\al)}(p)$ for all $p\in(0,1)$, where $\al\in(0,\infty)$, 
\begin{align}
	F_X^{(-\al)}(p)&:=\frac1{\Ga(\al)}\int_{[0,p)}(p-u)^{\al-1}\dd F_X^{-1}(u), 
	\label{eq:check Q} \\
	F_X^{-1}(p)&:=\inf\{x\in[0,\infty)\colon\P(X\le x)\ge p\}=-Q(-X;p)    
	\label{eq:F^{-1}}
\end{align}
with $Q(\cdot;\cdot)$ as in \eqref{eq:Q_0}, and the integral in \eqref{eq:check Q} is understood as the Lebesgue integral with respect to the nonnegative Borel measure $\mu_X^{-1}$ on $[0,1)$ defined by the condition that $\mu_X^{-1}\big([0,p)\big)=F_X^{-1}(p)$ for all $p\in(0,1)$; cf.\ \cite{muliere-scarsini,ortobelli_etal06}. 
Note that $F_X^{(-1)}(p)=F_X^{-1}(p)$. 
For nonnegative r.v.'s, the order $\le_{\al+1}^{-1}$  
coincides with the order $\le_{\al+1}$ if $\al\in\{0,1\}$; again see  \cite{muliere-scarsini,ortobelli_etal06}. 
%
Even for nonnegative r.v.'s, it seems unclear how the orders $\le_{\al+1}$ and $\le_{\al+1}^{-1}$ relate to each other for positive real $\al\ne1$; see e.g.\ the discussion following Proposition~1 in \cite{muliere-scarsini} and Note\,${}^1$ on page~100 in \cite{ortobelli_etal09}. 
%


The following theorem summarizes some of the properties of the tail probability bounds $P_\al(X;x)$ established above and also adds a few simple properties of these bounds. 

\begin{theorem}\label{th:P}
The following properties of the tail probability bounds $P_\al(X;x)$ are valid. 
\begin{description}
	\item[Model-independence: ] $P_\al(X;x)$ depends on the r.v.\ $X$ only through the distribution of $X$. 	 
	\item[Monotonicity in $X$: ] $P_\al(\cdot\,;x)$ is nondecreasing with respect to the stochastic dominance of order $\al+1$: for any r.v.\ $Y$ such that $X\alle Y$, one has $P_\al(X;x)\le P_\al(Y;x)$. Therefore, $P_\al(\cdot\,;x)$ is nondecreasing with respect to the stochastic dominance of any order $\ga\in[1,\al+1]$; in particular,   
for any r.v.\ $Y$ such that $X\le Y$, one has $P_\al(X;x)\le P_\al(Y;x)$. 
	\item[Monotonicity in $\al$: ] $P_\al(X;x)$ is nondecreasing in $\al\in[0,\infty]$. 
	\item[Monotonicity in $x$: ] $P_\al(X;x)$ is nonincreasing in $x\in\R$. 	
	\item[Values: ] $P_\al(X;x)$ takes only values in the interval $[0,1]$. 
	\item[$\al$-concavity in $x$: ] $P_\al(X;x)^{-1/\al}$ is convex in $x$ if $\al\in(0,\infty)$, and $\ln P_\al(X;x)$ is concave in $x$ if $\al=\infty$. 
	\item[Stability in $x$: ] $P_\al(X;x)$ is continuous in $x$ at any point $x\in\R$ -- except the point $x=x_*$ when $p_*>0$. 	
	\item[Stability in $\al$: ] Suppose that 
	a sequence $(\al_n)$ is as in 
	Proposition~\ref{prop:P cont in al}. 
	Then $P_{\al_n}(X;x)\to P_\al(X;x)$. 	 	
	\item[Stability in $X$: ] Suppose that $\al\in(0,\infty]$ and a sequence $(X_n)$ is as in Proposition~\ref{prop:P cont in X}. Then $P_\al(X_n;x)\to P_\al(X;x)$. 
	\item[Translation invariance: ] $P_\al(X+c;x+c)=P_\al(X;x)$ for all real $c$. 
	\item[Consistency: ] $P_\al(c;x)=P_0(c;x)=\ii{c\ge x}$ for all real $c$; that is, if the r.v.\ $X$ is the constant $c$, 
	then all the tail probability bounds $P_\al(X;x)$ precisely equal the true tail probability $\P(X\ge x)$. 
	\item[Positive homogeneity: ] $P_\al(\ka X;\ka x)=P_\al(X;x)$ for all real $\ka>0$.  
\end{description}
\end{theorem}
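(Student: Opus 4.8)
The plan is to treat Theorem~\ref{th:P} as a consolidation: most of the listed properties have already been established in the preceding propositions, so I would first dispatch those by citation and then verify the genuinely new items by short direct computations from the definition \eqref{eq:P new}. Model-independence is immediate, since $A_\al(X;x)(\la)=\E h_\al\big(\la(X-x)\big)$ in \eqref{eq:A new} depends on $X$ only through its law and $P_\al$ is an infimum of these expressions over $\la$. Monotonicity in $\al$ is exactly \eqref{eq:mono in al}; monotonicity in $x$ is part~(i) of Proposition~\ref{prop:P}; the ``Values'' claim follows from parts~(iv)--(v) of Proposition~\ref{prop:P} together with the standing assumption $X\in\XX_\al$ (and the trivial case $\al=0$, where $P_0=\P(X\ge x)\in[0,1]$); $\al$-concavity in $x$ is part~\eqref{it:conc} of Proposition~\ref{prop:P,+}; and stability in $\al$ and in $X$ are Propositions~\ref{prop:P cont in al} and \ref{prop:P cont in X}, respectively.

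For stability in $x$ I would combine parts~\eqref{it:cont}, \eqref{it:left-cont}, and \eqref{it:x ge x_*} of Proposition~\ref{prop:P,+}: continuity on $(-\infty,x_*]\cap\R$ forces the left limit at $x_*$ to equal $p_*$, while part~\eqref{it:x ge x_*} gives $P_\al(X;x)=0$ for $x>x_*$, so the only possible discontinuity is the drop from $p_*$ to $0$ at $x=x_*$, which is present exactly when $p_*>0$.

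The new content comprises four items. Translation invariance, consistency, and positive homogeneity reduce to one-line manipulations of \eqref{eq:P new}: the identities $A_\al(X+c;x+c)(\la)=A_\al(X;x)(\la)$ and $A_\al(\ka X;\ka x)(\la)=A_\al(X;x)(\ka\la)$ give translation invariance and positive homogeneity upon taking infima over $\la\in(0,\infty)$, using that $\la\mapsto\ka\la$ is a bijection of $(0,\infty)$; consistency is the evaluation $\inf_{\la>0}h_\al\big(\la(c-x)\big)=\ii{c\ge x}$, or equivalently the special case $x_*=c$, $p_*=1$ of parts~\eqref{it:x ge x_*} and \eqref{it:P_al,x<x_al} of Proposition~\ref{prop:P,+}. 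The most substantive item is monotonicity in $X$. Here I would argue from the ``old'' representation \eqref{eq:P old}: for $\al\in[0,\infty)$ one has $\tA_\al(X;x)(t)=\E g_{\al;t}(X)/(x-t)_+^\al$ with $g_{\al;t}\in\H\al$ and a denominator free of $X$, while for $\al=\infty$ one has $\tA_\infty(X;x)(t)=e^{-x/t}\,\E g_{\infty;t}(X)$; hence, by \eqref{eq:alle}, $X\alle Y$ forces $\tA_\al(X;x)(t)\le\tA_\al(Y;x)(t)$ for every $t\in T_\al$, and taking the infimum in $t$ yields $P_\al(X;x)\le P_\al(Y;x)$.

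The main obstacle, modest as it is, lies in the extension to every order $\ga\in[1,\al+1]$: the grading \eqref{eq:le-al-beta} must be invoked in the correct direction, namely that a dominance of the lower order $\ga$ (with $\ga-1\le\al$) implies dominance of the higher order $\al+1$, so that $X\overset{\ga}\le Y$ implies $X\alle Y$ and the previous paragraph applies. The pointwise case $X\le Y$ is then subsumed, since $X\le Y$ gives $X\st Y$ by \eqref{eq:st iff} and $\st$ is the order $\overset1\le$ of \eqref{eq:st}, which is the smallest in the family. Everything else being either cited or a one-line verification, I expect keeping this direction-of-grading logic straight to be the only point requiring genuine care.
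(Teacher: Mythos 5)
Your proposal is correct and follows essentially the same route as the paper: every item is either dispatched by citing the corresponding earlier result (monotonicity in $x$ from Proposition~\ref{prop:P}, $\al$-concavity and stability in $x$ from Proposition~\ref{prop:P,+}, stability in $\al$ and in $X$ from Propositions~\ref{prop:P cont in al} and \ref{prop:P cont in X}, monotonicity in $\al$ from \eqref{eq:mono in al}) or verified directly from \eqref{eq:P new}, and your monotonicity-in-$X$ argument via \eqref{eq:P old}--\eqref{eq:A old}, \eqref{eq:g_al,t}, \eqref{eq:alle}, and the grading \eqref{eq:le-al-beta} (applied in the correct direction) is exactly the paper's. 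The explicit one-line identities you give for translation invariance, positive homogeneity, and consistency merely spell out what the paper declares immediate from the definition.
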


A property similar to the model-independence was called ``neutrality'' in \cite[page~97]{yaari87}. 


\section{An optimal three-way stable and three-way monotonic spectrum of upper bounds on  quantiles}\label{quantiles} 
Take any 
\begin{equation}\label{eq:p}
p\in(0,1)	
\end{equation}
and introduce the generalized inverse (with respect to $x$) of the bound $P_\al(X;x)$ by the formula 
\begin{equation}\label{eq:Q}
	Q_\al(X;p):=\inf E_{\al,X}(p)=\inf\big\{x\in\R\colon P_\al(X;x)<p\big\};  
\end{equation}
where $E_{\al,X}(p)$ is as in \eqref{eq:J_al}. 
In particular, in view of the equality in \eqref{eq:0<al}, 
\begin{equation}\label{eq:Q_0}
	Q(X;p):=Q_0(X;p)=\inf\big\{x\in\R\colon\P(X\ge x)<p\big\}=\inf\big\{x\in\R\colon\P(X>x)<p\big\}, 
\end{equation}
which is a $(1-p)$-quantile of (the distribution of) the r.v.\ $X$; actually, $Q(X;p)$ is the largest one in the set of all $(1-p)$-quantiles of $X$. 

It follows immediately from \eqref{eq:Q}, 
\eqref{eq:mono in al}, and \eqref{eq:Q_0} that 
\begin{equation}\label{eq:Q incr}
\begin{aligned}
	&\text{$Q_\al(X;p)$ is an upper bound on the quantile $Q(X;p)$, and} \\
	&\text{$Q_\al(X;p)$ is 
	nondecreasing in $\al\in[0,\infty]$.}
\end{aligned}	
\end{equation}
Thus, one has a monotonic spectrum of upper bounds, $Q_\al(X;p)$, on the quantile $Q(X;p)$, ranging from the tightest bound, $Q_0(X;p)=Q(X;p)$, to the loosest one, $Q_\infty(X;p)$, which latter is based on the exponential bound $P_\infty(X;x)=\inf_{\la>0}\E e^{\la(X-x)}$ on $\P(X\ge x)$. 
Also, it is obvious from \eqref{eq:Q} 
that 
\begin{equation}\label{eq:Q decr in p}
\text{$Q_\al(X;p)$ is nonincreasing in $p\in(0,1)$. }	
\end{equation}



\begin{proposition}\label{prop:inverse} \ 
Recall the definitions of $x_*$ and $x_\al$ in \eqref{eq:x_*,p_*} and \eqref{eq:x_al}. The following statements are true. 
\begin{enumerate}[(i)]
	\item\label{it:Q in R} $Q_\al(X;p)\in\R$.  
	\item\label{it:p le p_*} If $p\in(0,p_*]\cap(0,1)$ then $Q_\al(X;p)=x_*$.  
	\item\label{it:Q le x_*} $Q_\al(X;p)\le x_*$.  
	\item\label{it:Q to x_*} $Q_\al(X;p)\underset{p\downarrow0}\longrightarrow x_*$.  
	\item\label{it:p>p_*} If $\al\in(0,\infty]$, then the function 
\begin{equation}\label{eq:inv}
(p_*,1)\ni p\mapsto Q_\al(X;p)\in(x_\al,x_*)	
\end{equation}
is the unique inverse to the continuous strictly decreasing function 
\begin{equation}\label{eq:dir}
(x_\al,x_*)\ni x\mapsto P_\al(X;x)\in(p_*,1). 
\end{equation}
So, the function \eqref{eq:inv} too is continuous and strictly decreasing. 
	\item\label{it:y} If $\al\in(0,\infty]$, then for any $y\in\big(-\infty,Q_\al(X;p)\big)$ one has 
$P_\al(X;y)>p$. 
	\item\label{it:>EX} If $\al\in[1,\infty]$, then $Q_\al(X;p)>\E X$. 	
\end{enumerate}
\end{proposition}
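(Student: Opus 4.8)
The plan is to deduce each part of Proposition~\ref{prop:inverse} from the qualitative shape of the map $x\mapsto P_\al(X;x)$ already recorded in \eqref{eq:P decr in x} and Proposition~\ref{prop:P,+}. Since by \eqref{eq:P decr in x} this map is nonincreasing with $P_\al(X;(-\infty)+)=1$ and $P_\al(X;\infty-)=0$, and by part~\eqref{it:left-cont} of Proposition~\ref{prop:P,+} it is left-continuous, the remark following that proposition lets me regard it as the tail function of an auxiliary r.v.\ $Z_\al$, so that $Q_\al(X;p)=\inf\{x\colon\P(Z_\al\ge x)<p\}=Q_0(Z_\al;p)$ is the largest $(1-p)$-quantile of $Z_\al$. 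Part~\eqref{it:Q in R} is then immediate: because $P_\al(X;x)\to0<p$ as $x\to\infty$ the set $E_{\al,X}(p)$ is nonempty, and because $P_\al(X;x)\to1>p$ as $x\to-\infty$ it is bounded below, whence $Q_\al(X;p)\in\R$.

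For the location statements I would first dispatch \eqref{it:Q le x_*} and \eqref{it:p le p_*}. For $x>x_*$ part~\eqref{it:x ge x_*} of Proposition~\ref{prop:P,+} gives $P_\al(X;x)=0<p$, so $(x_*,\infty)\subseteq E_{\al,X}(p)$ and hence $Q_\al(X;p)\le x_*$, which is \eqref{it:Q le x_*}. If moreover $p\le p_*$, then for every $x\le x_*$ the fact that $P_\al(X;\cdot)$ is nonincreasing together with $P_\al(X;x_*)=p_*$ (again part~\eqref{it:x ge x_*}) yields $P_\al(X;x)\ge p_*\ge p$, so $E_{\al,X}(p)=(x_*,\infty)$ and $Q_\al(X;p)=x_*$, which is \eqref{it:p le p_*}. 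For \eqref{it:Q to x_*} I would use that $Q_\al(X;p)$ is nonincreasing in $p$ by \eqref{eq:Q decr in p} and bounded above by $x_*$, hence has a limit $L\le x_*$ as $p\downarrow0$; if $p_*>0$ then \eqref{it:p le p_*} gives $L=x_*$, while if $p_*=0$ then $L<x_*$ would force, for any $x\in(L,x_*)$, the relation $P_\al(X;x)<p$ for all $p>0$, i.e.\ $P_\al(X;x)=0$, contradicting part~\eqref{it:x<x_*} of Proposition~\ref{prop:P,+}; the case $x_*=\infty$ is handled identically.

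The heart of the matter is \eqref{it:p>p_*}, where $\al\in(0,\infty]$. Here parts~\eqref{it:cont}, \eqref{it:str decr}, \eqref{it:P_al,x<x_al}, and \eqref{it:x ge x_*} of Proposition~\ref{prop:P,+} show that $x\mapsto P_\al(X;x)$ is continuous and strictly decreasing on $(x_\al,x_*)$ with one-sided limits $1$ at $x_\al$ and $p_*$ at $x_*$, hence a continuous strictly decreasing bijection $(x_\al,x_*)\to(p_*,1)$; it therefore admits a unique inverse of the same type. To identify $Q_\al(X;\cdot)$ with this inverse on $(p_*,1)$ I would, for each such $p$, let $x_p\in(x_\al,x_*)$ be the unique solution of $P_\al(X;x_p)=p$ and check directly, using monotonicity and the endpoint values, that $E_{\al,X}(p)=(x_p,\infty)$, so that $Q_\al(X;p)=x_p$. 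Part~\eqref{it:y} is then the strict version of the trivial bound $P_\al(X;y)\ge p$ for $y<Q_\al(X;p)$: if equality held, strict decrease on $[x_\al,x_*]$ (part~\eqref{it:str decr}) would place every point of $(y,x_*)$ in $E_{\al,X}(p)$, forcing $Q_\al(X;p)\le y$, a contradiction.

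Finally, for \eqref{it:>EX} with $\al\in[1,\infty]$ I would invoke part~\eqref{it:EX} of Proposition~\ref{prop:P,+}, namely $x_\al=\E X$. When $p>p_*$, part~\eqref{it:p>p_*} places $Q_\al(X;p)$ in $(x_\al,x_*)$, so $Q_\al(X;p)>\E X$; when $p\le p_*$, part~\eqref{it:p le p_*} gives $Q_\al(X;p)=x_*$, and since $x_\al<x_*$ whenever $p_*<1$ (part~\eqref{it:x_al<x_*}) we again get the strict inequality. I expect the genuine obstacle to be exactly this strictness: it rests on the strict decrease and continuity of $P_\al(X;\cdot)$ at the \emph{endpoints} $x_\al,x_*$, and it degenerates precisely in the trivial case $p_*=1$ (that is, $X$ a.s.\ constant), where $x_\al=x_*=\E X=Q_\al(X;p)$ and only ``$\ge$'' survives; one should therefore read \eqref{it:>EX} under the tacit exclusion of the a.s.-constant $X$ (or with the $\E X=-\infty$ convention of part~\eqref{it:EX}, in which case \eqref{it:>EX} reduces to \eqref{it:Q in R}). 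Pinning down this endpoint behavior, and tracking the $x_*=\infty$ and $\E X=-\infty$ cases throughout, is where the real care is needed.
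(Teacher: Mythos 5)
Your proof is correct and follows essentially the same route as the paper's: every part is read off from the qualitative behavior of $P_\al(X;\cdot)$ recorded in \eqref{eq:P decr in x} and Proposition~\ref{prop:P,+}, with part~\eqref{it:p>p_*} obtained by exhibiting $(x_\al,x_*)\ni x\mapsto P_\al(X;x)$ as a continuous strictly decreasing bijection onto $(p_*,1)$ (checking the one-sided limits at $x_\al$ and $x_*$) and then identifying $Q_\al(X;\cdot)$ with its inverse via $E_{\al,X}(p)=(x_p,\infty)$. Your only local deviations --- proving \eqref{it:Q to x_*} directly for general $\al$ from part~\eqref{it:x<x_*} of Proposition~\ref{prop:P,+} rather than first for $\al=0$ and then lifting by monotonicity in $\al$, and arguing \eqref{it:y} by contradiction rather than by the paper's case split on $p\lessgtr p_*$ --- are equally valid. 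Your caveat about \eqref{it:>EX} is also a genuine catch: for a.s.\ constant $X$ one has $p_*=1$ and $Q_\al(X;p)=x_*=\E X$, so only ``$\ge$'' holds; the paper's one-line proof (via \eqref{eq:inv} and \eqref{eq:Q decr in p}) tacitly assumes $p_*<1$ and is subject to the same degenerate exception.
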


\begin{ex}\label{ex:Q,X_ab}\ 

\vspace*{6pt}
\noindent
\begin{parbox}{.38\textwidth}
	{ \includegraphics[width=.36\textwidth]{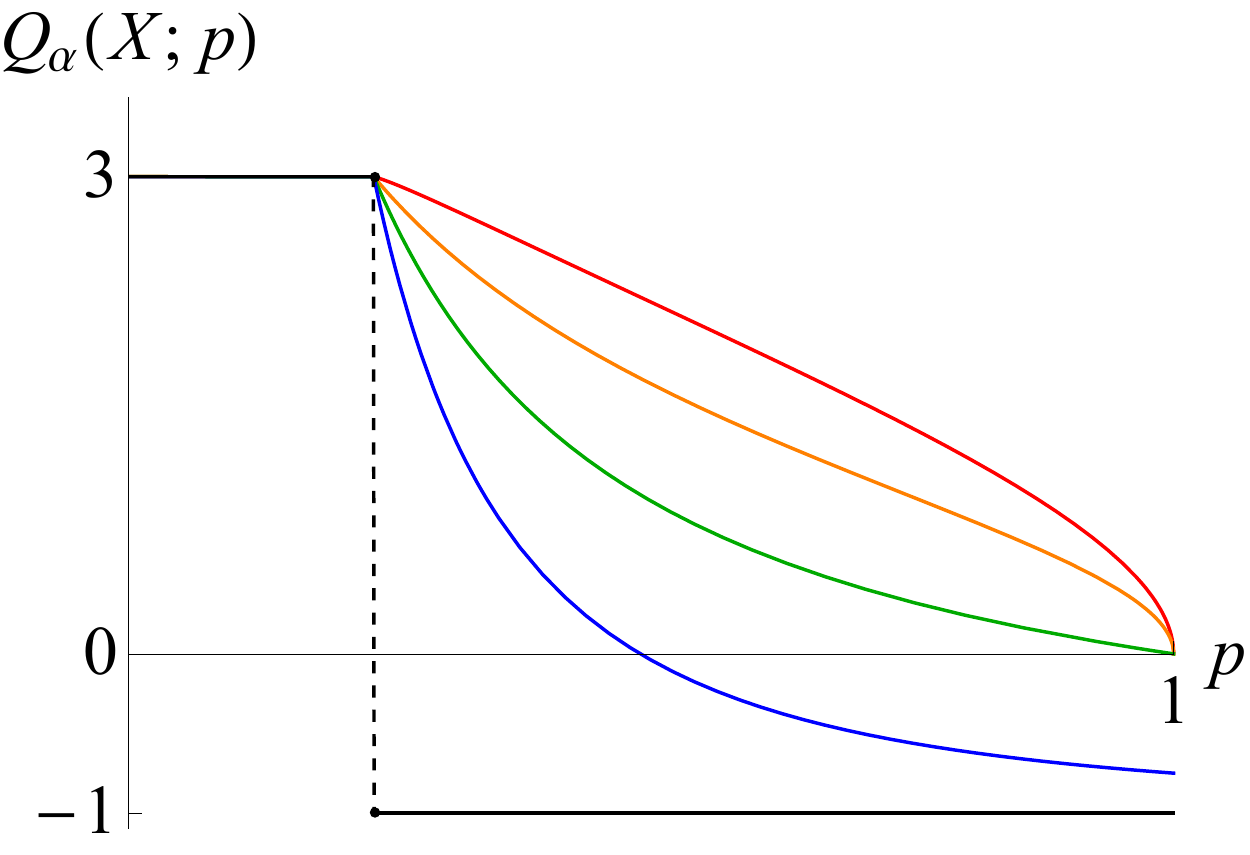} }
\end{parbox}
\begin{parbox}{.6\textwidth}
{Some parts of Proposition~\ref{prop:inverse} are illustrated in the picture here on the left, with graphs $\{\big(p,Q_\al(X;p)\big)\colon 0<p<1\}$ for a r.v.\ $X=X_{a,b}$ as in Example~\ref{ex:P,X_ab}, with the same $a=1$ and $b=3$, and the same values of $\al$, equal $0$ (black), $\frac12$ (blue), $1$ (green), $2$ (orange), and $\infty$ (red). 
One may compare this picture with the one in Example~\ref{ex:P,X_ab}, having in mind that the function $Q_\al(X;\cdot)$ is a generalized inverse to the function $P_\al(X;\cdot)$. 
}
\end{parbox}
\end{ex}


\vspace*{6pt}

The definition \eqref{eq:Q} of $Q_\al(X;p)$ is rather complicated, in view of the definition \eqref{eq:P new} of $P_\al(X;x)$. 
So, the following theorem will be useful, as it provides a more direct expression of $Q_\al(X;p)$; at that, one may again recall \eqref{eq:Q_0}, concerning the case $\al=0$. 

\begin{theorem}\label{prop:Q=}
For all $\al\in(0,\infty]$
\begin{equation}\label{eq:Q=inf}
Q_\al(X;p)=\inf_{t\in T_\al}B_\al(X;p)(t),
\end{equation} 
where $T_\al$ is as in \eqref{eq:T_al} and 
\begin{equation}\label{eq:B}
	B_\al(X;p)(t):=
\left\{	
\begin{alignedat}{2}
	&t+\frac{\|(X-t)_+\|_\al}{p^{1/\al}}&&\text{\ \ for $\al\in(0,\infty)$, }\\
	&t\,\ln\frac{\E e^{X/t}}p &&\text{\ \ for $\al=\infty$.}
\end{alignedat}	
\right.
\end{equation} 
\end{theorem}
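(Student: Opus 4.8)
The plan is to unfold the definition \eqref{eq:Q} of $Q_\al(X;p)$ as the generalized inverse of $x\mapsto P_\al(X;x)$, using the ``old'' representation \eqref{eq:P old}--\eqref{eq:A old} of $P_\al(X;x)$ rather than the ``new'' one \eqref{eq:P new}. The whole identity will follow once I show that, for each fixed $p$, the set $E_{\al,X}(p)=\{x\in\R\colon P_\al(X;x)<p\}$ from \eqref{eq:J_al} is exactly the open half-line $\big(\inf_{t\in T_\al}B_\al(X;p)(t),\infty\big)$; taking the infimum (with the usual convention $\inf\emptyset=\infty$) then gives \eqref{eq:Q=inf}. The heart of the matter, and really the only nonroutine step, is a term-by-term rearrangement showing that, for a single $t\in T_\al$, the inequality $\tA_\al(X;x)(t)<p$ is equivalent to $B_\al(X;p)(t)<x$.

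Consider first $\al\in(0,\infty)$, where $T_\al=\R$ and $\tA_\al(X;x)(t)=\E(X-t)_+^\al/(x-t)_+^\al$. If $x\le t$ then $(x-t)_+^\al=0$ and the convention $\frac a0:=\infty$ makes $\tA_\al(X;x)(t)=\infty\not<p$, so such $t$ never contribute; hence one may restrict to $t<x$, where $(x-t)_+^\al=(x-t)^\al>0$. For such $t$ the inequality $\tA_\al(X;x)(t)<p$ reads $\E(X-t)_+^\al<p\,(x-t)^\al$, i.e.\ $\|(X-t)_+\|_\al^\al<p\,(x-t)^\al$; since both sides are nonnegative and $u\mapsto u^{1/\al}$ is increasing, this is equivalent to $\|(X-t)_+\|_\al<p^{1/\al}(x-t)$, i.e.\ to $t+\|(X-t)_+\|_\al/p^{1/\al}<x$, which is exactly $B_\al(X;p)(t)<x$ by \eqref{eq:B}. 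Here $\|(X-t)_+\|_\al<\infty$ because $X\in\XX_\al$, and the restriction $t<x$ used above is in fact automatic: since the norm term is nonnegative, $B_\al(X;p)(t)\ge t$, so $B_\al(X;p)(t)<x$ already forces $t<x$.

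For $\al=\infty$, where $T_\al=(0,\infty)$ and $\tA_\infty(X;x)(t)=\E e^{(X-x)/t}$, the same rearrangement is carried out by factoring $\E e^{(X-x)/t}=e^{-x/t}\,\E e^{X/t}$. The inequality $\tA_\infty(X;x)(t)<p$ becomes $\E e^{X/t}/p<e^{x/t}$; taking logarithms (legitimate since both sides are positive) and multiplying by $t>0$ gives $t\ln\frac{\E e^{X/t}}p<x$, which is $B_\infty(X;p)(t)<x$ by \eqref{eq:B}. When $\E e^{X/t}=\infty$ the value $B_\infty(X;p)(t)=\infty$ simply does not contribute to the infimum, and $\La_X\ne\emptyset$ (i.e.\ $X\in\XX_\infty$) guarantees the infimum is effectively taken over a nonempty domain.

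Having the per-$t$ equivalence in hand, I assemble the set identity. By \eqref{eq:P old}, $P_\al(X;x)<p$ holds iff $\tA_\al(X;x)(t)<p$ for some $t\in T_\al$, hence iff $B_\al(X;p)(t)<x$ for some $t\in T_\al$, hence iff $\inf_{t\in T_\al}B_\al(X;p)(t)<x$. Therefore $E_{\al,X}(p)=\big(\inf_{t\in T_\al}B_\al(X;p)(t),\infty\big)$, and \eqref{eq:Q=inf} follows upon taking the infimum of this half-line. I do not expect any genuine obstacle: the argument is driven entirely by the elementary rearrangement, and the only care required is the bookkeeping around the conventions $\frac a0=\infty$ and $0^0=0$, the possible value $+\infty$ of the generalized moments, and the check that the auxiliary constraint $t<x$ is automatically met, so that the infima over $T_\al$ line up cleanly.
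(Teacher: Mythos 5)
Your proposal is correct and follows essentially the same route as the paper: the per-$t$ equivalence $\tA_\al(X;x)(t)<p\iff B_\al(X;p)(t)<x$ that you verify by elementary rearrangement is exactly the level-set duality \eqref{eq:T=T} on which the paper's proof rests, and the subsequent chain of equivalences leading to $E_{\al,X}(p)=\big(\inf_{t\in T_\al}B_\al(X;p)(t),\infty\big)$ and then to \eqref{eq:Q=inf} via \eqref{eq:Q} is the same. The only difference is that you spell out the algebra (including the degenerate cases $t\ge x$ and $\E e^{X/t}=\infty$) that the paper declares immediate from the definitions.
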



\begin{proof}[Proof of Theorem~\ref{prop:Q=}]
The proof is based on the simple observation, following immediately from the definitions \eqref{eq:A old} and \eqref{eq:B}, that the dual level sets for the functions $\tA_\al(X;x)$ and $B_\al(X;p)$ are the same: 
\begin{equation}\label{eq:T=T}
	T_{\tA_\al(X;x)}(p)=T_{B_\al(X;p)}(x)  
\end{equation}
for all $\al\in(0,\infty]$, $x\in\R$, and $p\in(0,1)$, where 
\begin{equation*}
	T_{\tA_\al(X;x)}(p):=\{t\in T_\al\colon\tA_\al(X;x)(t)<p\}\quad\text{and}\quad
	T_{B_\al(X;p)}(x):=\{t\in T_\al\colon B_\al(X;p)(t)<x\}. 
\end{equation*}
Indeed, by \eqref{eq:P old} and \eqref{eq:T=T},  
\begin{equation*}
\begin{aligned}
	P_\al(X;x)<p &\iff \inf_{t\in T_\al}\tA_\al(X;x)(t)<p \\ 
	&\iff T_{\tA_\al(X;x)}(p)\ne\emptyset	\iff T_{B_\al(X;p)}(x)\ne\emptyset	
&\iff x>\inf_{t\in T_\al}\,B_\al(X;p)(t).   	
\end{aligned}	
\end{equation*}
Now \eqref{eq:Q=inf} follows immediately by \eqref{eq:Q}. 
\end{proof}

Note that the case $\al=\infty$ of Theorem~\ref{prop:Q=} is a special case of \cite[Proposition 1.5]{pin-yt}, and the above proof of Theorem~\ref{prop:Q=} is similar to that of \cite[Proposition 1.5]{pin-yt}. 
Correspondingly, the duality presented in the above proof of Theorem~\ref{prop:Q=} is a generalization of the bilinear Legendre--Fenchel duality considered in \cite{pin-yt}. 
	
\begin{theorem}\label{th:coher}
The following properties of the quantile bounds $Q_\al(X;p)$ are valid. 
\begin{description}
	\item[Model-independence: ] $Q_\al(X;p)$ depends on the r.v.\ $X$ only through the distribution of $X$. 	
	\item[Monotonicity in $X$: ] $Q_\al(\cdot\,;p)$ is nondecreasing with respect to the stochastic dominance of order $\al+1$: for any r.v.\ $Y$ such that $X\alle Y$, one has $Q_\al(X;p)\le Q_\al(Y;p)$. Therefore, $Q_\al(\cdot\,;p)$ is nondecreasing with respect to the stochastic dominance of any order $\ga\in[1,\al+1]$; in particular,   
for any r.v.\ $Y$ such that $X\le Y$, one has $Q_\al(X;p)\le Q_\al(Y;p)$. 	 
	\item[Monotonicity in $\al$: ] $Q_\al(X;p)$ is nondecreasing in $\al\in[0,\infty]$. 
	\item[Monotonicity in $p$: ] $Q_\al(X;p)$ is nonincreasing in $p\in(0,1)$, and $Q_\al(X;p)$ is strictly decreasing in $p\in[p_*,1)\cap(0,1)$ if $\al\in(0,\infty]$. 	
	\item[Finiteness: ] $Q_\al(X;p)$ takes only (finite) real values. 
	\item[Concavity in $p^{-1/\al}$ or in $\ln\frac1p$: ] $Q_\al(X;p)$ is concave in $p^{-1/\al}$ if $\al\in(0,\infty)$, and $Q_\infty(X;p)$ is concave in $\ln\frac1p$. 
	\item[Stability in $p$: ] $Q_\al(X;p)$ is continuous in $p\in(0,1)$ if $\al\in(0,\infty]$. 	
	\item[Stability in $X$: ] Suppose that $\al\in(0,\infty]$ and a sequence $(X_n)$ is as in Proposition~\ref{prop:P cont in X}. Then $Q_\al(X_n;p)\to Q_\al(X;p)$. 
	\item[Stability in $\al$: ] Suppose that $\al\in(0,\infty]$ and a sequence $(\al_n)$ is as in 
	Proposition~\ref{prop:P cont in al}. 
	 Then $Q_{\al_n}(X;p)\to Q_\al(X;p)$. 	 	
	\item[Translation invariance: ] $Q_\al(X+c;p)=Q_\al(X;p)+c$ for all real $c$. 
	\item[Consistency: ] $Q_\al(c;p)=c$ for all real $c$; that is, if the r.v.\ $X$ is the constant $c$, 
	then all the quantile bounds $Q_\al(X;p)$ equal $c$. 
	\item[Sensitivity: ] Suppose here that $X\ge0$. If at that $\P(X>0)>0$, then $Q_\al(X;p)>0$ for all $\al\in(0,\infty]$; if, moreover, $\P(X>0)>p$, then $Q_0(X;p)>0$. 
	\item[Positive homogeneity: ] $Q_\al(\ka X;p)=\ka Q_\al(X;p)$ for all real $\ka\ge0$.  
	\item[Subadditivity: ] $Q_\al(X;p)$ is subadditive in $X$ if $\al\in[1,\infty]$; that is, for any other r.v.\ $Y$ (defined on the same probability space as $X$) one has 
	$$Q_\al(X+Y;p)\le Q_\al(X;p)+Q_\al(Y;p).$$ 
		\item[Convexity: ] $Q_\al(X;p)$ is convex in $X$ if $\al\in[1,\infty]$; that is, for any other r.v.\ $Y$ (defined on the same probability space as $X$) and any $t\in(0,1)$ one has $$Q_\al\big((1-t)X+tY;p\big)\le(1-t)Q_\al(X;p)+tQ_\al(Y;p).$$ 
\end{description}
\end{theorem}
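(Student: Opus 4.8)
The plan is to derive every item of Theorem~\ref{th:coher} from two sources: the generalized-inverse description \eqref{eq:Q} together with the properties of $P_\al$ collected in Theorem~\ref{th:P} and Proposition~\ref{prop:inverse}, and the explicit variational formula $Q_\al(X;p)=\inf_{t\in T_\al}B_\al(X;p)(t)$ of Theorem~\ref{prop:Q=}. Several items are already on record: model-independence is immediate from \eqref{eq:Q}, since $Q_\al$ is built solely from $P_\al$, which is model-independent; finiteness is part~\eqref{it:Q in R} of Proposition~\ref{prop:inverse}; monotonicity in $\al$ is \eqref{eq:Q incr}; and monotonicity in $p$ (including the strict part for $\al\in(0,\infty]$) is \eqref{eq:Q decr in p} together with part~\eqref{it:p>p_*} of Proposition~\ref{prop:inverse}. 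Consistency follows by feeding the identity $P_\al(c;x)=\ii{c\ge x}$ of Theorem~\ref{th:P} into \eqref{eq:Q}.

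Next I would treat the order- and limit-theoretic items by transporting the corresponding facts about $P_\al$ through the inversion \eqref{eq:Q}. For monotonicity in $X$: if $X\alle Y$ then $P_\al(X;x)\le P_\al(Y;x)$ for every $x$ by Theorem~\ref{th:P}, hence $\{x\colon P_\al(Y;x)<p\}\subseteq\{x\colon P_\al(X;x)<p\}$, and taking infima in \eqref{eq:Q} gives $Q_\al(X;p)\le Q_\al(Y;p)$; the extension to every order $\ga\in[1,\al+1]$ is the grading \eqref{eq:le-al-beta}. For the stability items I would use that, by part~\eqref{it:p>p_*} of Proposition~\ref{prop:inverse}, $Q_\al(X;\cdot)$ is on $(p_*,1)$ the genuine inverse of the continuous strictly decreasing map $P_\al(X;\cdot)$; pointwise convergence of $P_{\al_n}(X;\cdot)$ (Proposition~\ref{prop:P cont in al}) or of $P_\al(X_n;\cdot)$ (Proposition~\ref{prop:P cont in X}) to a continuous strictly decreasing limit then forces convergence of the inverses at each fixed $p$, which is the standard ``convergence of quantiles at continuity points of a continuous strictly monotone limit'' argument; the boundary regime $p\le p_*$ is handled by part~\eqref{it:p le p_*}. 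Stability in $p$ for $\al\in(0,\infty]$ is likewise Proposition~\ref{prop:inverse}\eqref{it:p>p_*} glued to the constant value $x_*$ on $(0,p_*]$. For sensitivity with $X\ge0$: since $h_\al(0)=1$ and $h_\al$ is nondecreasing, $\E h_\al(\la X)\ge1$ for all $\la>0$, so $P_\al(X;0)=1$ for $\al\in(0,\infty]$; hence $x_\al\ge0$, while $\P(X>0)>0$ forces $x_*>0$, so $Q_\al(X;p)>x_\al\ge0$ for $p\in(p_*,1)$ and $Q_\al(X;p)=x_*>0$ for $p\le p_*$; the case $\al=0$ is a direct computation with $\P(X\ge\cdot)$.

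The algebraic items I would read off the formula \eqref{eq:B}. Writing $v:=p^{-1/\al}$ (resp.\ $w:=\ln\frac1p$), each $B_\al(X;p)(t)$ is an affine function of $v$ (resp.\ of $w$) for fixed $t$, so $Q_\al(X;p)$, an infimum over $t$ of such affine functions, is concave in $p^{-1/\al}$ (resp.\ in $\ln\frac1p$). Translation invariance and positive homogeneity follow from the identities $B_\al(X+c;p)(t)=c+B_\al(X;p)(t-c)$ and $B_\al(\ka X;p)(t)=\ka\,B_\al(X;p)(t/\ka)$ (for $\ka>0$; the case $\ka=0$ is consistency), upon taking infima over $t\in T_\al$.

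The crux, and the main obstacle, is subadditivity for $\al\in[1,\infty]$, from which convexity is immediate via positive homogeneity, since $Q_\al\big((1-t)X+tY;p\big)\le Q_\al\big((1-t)X;p\big)+Q_\al(tY;p)=(1-t)Q_\al(X;p)+tQ_\al(Y;p)$. The strategy is: given $\vp>0$, pick near-minimizers $s,u\in T_\al$ with $B_\al(X;p)(s)<Q_\al(X;p)+\vp$ and $B_\al(Y;p)(u)<Q_\al(Y;p)+\vp$, and test the infimum for $X+Y$ at $t=s+u\in T_\al$. For $\al\in[1,\infty)$ the point is the single inequality $\|(X+Y-s-u)_+\|_\al\le\|(X-s)_+\|_\al+\|(Y-u)_+\|_\al$, which I would get from $(a+b)_+\le a_++b_+$ together with the monotonicity of $\|\cdot\|_\al$ on nonnegative variables and Minkowski's inequality --- and it is exactly Minkowski that needs $\al\ge1$, explaining the hypothesis; after cancelling the common $s+u$ this yields $B_\al(X+Y;p)(s+u)\le B_\al(X;p)(s)+B_\al(Y;p)(u)$. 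For $\al=\infty$, with $T_\infty=(0,\infty)$ and $t=s+u$, I would instead apply H\"older's inequality with exponents $\frac{s+u}{s}$ and $\frac{s+u}{u}$ to $e^{(X+Y)/(s+u)}=(e^{X/s})^{s/(s+u)}(e^{Y/u})^{u/(s+u)}$, obtaining $\E e^{(X+Y)/(s+u)}\le(\E e^{X/s})^{s/(s+u)}(\E e^{Y/u})^{u/(s+u)}$, which after forming $(s+u)\ln\frac{(\,\cdot\,)}p$ reduces, by the additivity of the logarithm, to the exact equality $B_\infty(X+Y;p)(s+u)=B_\infty(X;p)(s)+B_\infty(Y;p)(u)$. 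In either case $Q_\al(X+Y;p)\le B_\al(X;p)(s)+B_\al(Y;p)(u)<Q_\al(X;p)+Q_\al(Y;p)+2\vp$, and letting $\vp\downarrow0$ finishes. The only points needing care are that $X+Y\in\XX_\al$ (already noted, as $\XX_\al$ is a convex cone), so that all quantities are finite, and that the test point $s+u$ lies in $T_\al$, which holds for both $T_\al=\R$ $(\al\in[1,\infty))$ and $T_\infty=(0,\infty)$.
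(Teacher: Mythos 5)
Your proposal is correct, and for most items it runs parallel to the paper's proof (inversion of the properties of $P_\al$ via \eqref{eq:Q}, affineness of $B_\al(X;p)(t)$ in $p^{-1/\al}$ or $\ln\frac1p$ for concavity, the sandwich argument with $E_{\al,X}(p)$ and part~\eqref{it:y} of Proposition~\ref{prop:inverse} for the two stability-in-the-limit items). Two places differ genuinely. First, sensitivity: the paper handles $\al\in(0,1)$ by a rather delicate analysis of the derivative of $B_\al(X;p)(t)$ on $(-\infty,0]$ (reducing the sign question to a convexity inequality for $s\mapsto\ln\E e^{(1-\al)s\ln Y}$) and handles $\al=1$ via \eqref{eq:al=1}; your observation that $X\ge0$ forces $P_\al(X;0)=1$, hence $x_\al\ge0$, so that $Q_\al(X;p)>x_\al\ge0$ on $(p_*,1)$ and $Q_\al(X;p)=x_*>0$ on $(0,p_*]$, covers all $\al\in(0,\infty]$ at once and is markedly shorter; I checked it and it is sound. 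Second, subadditivity versus convexity: the paper establishes convexity first, by showing $B_\al(X;p)(t)$ is jointly convex in $(X,t)$ (Minkowski for $\al<\infty$, H\"older for $\al=\infty$) and invoking the inf-projection theorem, and then deduces subadditivity from positive homogeneity; you prove subadditivity directly by testing the infimum for $X+Y$ at $t=s+u$ with near-minimizers $s,u$, then deduce convexity. The two routes use exactly the same inequalities and are equivalent in content; yours avoids the citation to the inf-projection fact at the cost of an $\vp$-argument, and has the small advantage of making explicit why $\al\ge1$ is needed (Minkowski) and why $s+u$ is the right test point. One wording slip: for $\al=\infty$ the H\"older step yields the \emph{inequality} $B_\infty(X+Y;p)(s+u)\le B_\infty(X;p)(s)+B_\infty(Y;p)(u)$, not an ``exact equality'' (equality in H\"older would require proportionality of the integrands); this does not affect the argument, since the inequality is all that is needed. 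Likewise, your translation identity $B_\al(X+c;p)(t)=c+B_\al(X;p)(t-c)$ should read $B_\infty(X+c;p)(t)=c+B_\infty(X;p)(t)$ when $\al=\infty$, which again changes nothing after taking the infimum over $t\in T_\al$.
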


The inequality $Q_1(X;p)\le Q_\infty(X;p)$, in other notations, was mentioned (without proof) in \cite{rio06.17.13}; of course, this inequality is a particular, and important, case of the monotonicity of $Q_\al(X;p)$ in $\al\in[0,\infty]$. 
That $Q_\al(\cdot\,;p)$ is nondecreasing with respect to the stochastic dominance of order $\al+1$ was shown (using other notations) in  
\cite{degiorgi} in the case $\al=1$. 

The following strict 
monotonicity property 
complements the monotonicity property 
of $Q_\al(X;p)$ in $X$ stated in Theorem~\ref{th:coher}.  

\begin{proposition}\label{cor:str mono X} 
Suppose that a r.v.\ $Y$ is stochastically strictly greater than $X$ \big(which may be written as $X\sst Y$; cf.\ 
\eqref{eq:st}\big) in the sense that $X\st Y$ and for any $v\in\R$ there is some $u\in(v,\infty)$ such that $\P(X\ge u)<\P(Y\ge u)$. Then 
$Q_\al(X;p)<Q_\al(Y;p)$ if $\al\in(0,\infty]$. 
\end{proposition}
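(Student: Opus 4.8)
The plan is to pass to the variational representation of the quantile bounds provided by Theorem~\ref{prop:Q=} and to show that the pointwise comparison of the functions $B_\al(X;p)(\cdot)$ and $B_\al(Y;p)(\cdot)$ becomes strict at the point where the infimum defining $Q_\al(Y;p)$ is attained.

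First I would realize the dominance $X\st Y$ by a coupling: by \eqref{eq:st iff} there are r.v.'s $X_1$ and $Y_1$, distributed as $X$ and $Y$ respectively, with $X_1\le Y_1$ almost surely. The crucial observation is that the strict dominance $X\sst Y$ forces
\[
\P(X_1<Y_1,\,Y_1>t)>0\quad\text{for every }t\in\R.
\]
Indeed, were this probability $0$ for some $t_0$, then a.s.\ $X_1=Y_1$ on $\{Y_1>t_0\}$; since $\{Y_1\ge u\}\subseteq\{Y_1>t_0\}$ for $u>t_0$ and $X_1\le Y_1$ always, this would give $\P(X\ge u)=\P(Y\ge u)$ for all $u>t_0$, contradicting the assumption that $\P(X\ge u)<\P(Y\ge u)$ for some $u$ exceeding every prescribed level. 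From this, for each $t\in T_\al$ the a.s.\ inequality $(X_1-t)_+^\al\le(Y_1-t)_+^\al$ (resp.\ $e^{X_1/t}\le e^{Y_1/t}$ when $\al=\infty$) is strict on an event of positive probability, so that, recalling \eqref{eq:B} and that $X,Y\in\XX_\al$,
\[
B_\al(X;p)(t)<B_\al(Y;p)(t)\qquad\text{for all }t\in T_\al.
\]

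Next, by Theorem~\ref{prop:Q=} one has $Q_\al(X;p)=\inf_{t\in T_\al}B_\al(X;p)(t)$ and $Q_\al(Y;p)=\inf_{t\in T_\al}B_\al(Y;p)(t)$. The strict pointwise inequality above does not by itself separate the two infima, so the heart of the matter is to show that the infimum defining $Q_\al(Y;p)$ is \emph{attained} at some $t_\circ\in T_\al$. For $\al\in(0,\infty)$ the function $t\mapsto B_\al(Y;p)(t)=t+p^{-1/\al}\|(Y-t)_+\|_\al$ is continuous and coercive: it tends to $+\infty$ as $t\to+\infty$ (as $\|(Y-t)_+\|_\al\to0$) and as $t\to-\infty$ (bounding $\|(Y-t)_+\|_\al\ge(s-t)\,\P(Y\ge s)^{1/\al}$ and choosing $s$ with $\P(Y\ge s)^{1/\al}>p^{1/\al}$, so that the term $p^{-1/\al}\|(Y-t)_+\|_\al$ outweighs $t$), and hence attains its minimum. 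For $\al=\infty$ I would first note that the hypothesis $X\sst Y$ forces $x_{*,Y}=\infty$ — otherwise, taking $v=x_{*,Y}$, there is no $u>v$ with $\P(X\ge u)<\P(Y\ge u)$, since both tail probabilities vanish beyond $x_{*,Y}$ — so that $Q_\infty(Y;p)$ is finite while $B_\infty(Y;p)(t)\to x_{*,Y}=\infty$ as $t\to0+$ and $B_\infty(Y;p)(t)\to+\infty$ as $t\to\infty$; hence the infimum is again attained at an interior $t_\circ\in(0,\infty)$.

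Finally, combining the two steps,
\[
Q_\al(X;p)\le B_\al(X;p)(t_\circ)<B_\al(Y;p)(t_\circ)=Q_\al(Y;p),
\]
which is the desired strict inequality. The main obstacle is the attainment of the infimum defining $Q_\al(Y;p)$: once an interior minimizer $t_\circ$ is in hand, the strict pointwise gap at $t_\circ$ closes the argument immediately. The delicate point is thus the coercivity and boundary analysis, in particular ruling out the escape of minimizing sequences to the boundary $t\to0+$ when $\al=\infty$, which is exactly where the observation $x_{*,Y}=\infty$ is used.
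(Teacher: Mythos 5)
Your proof is correct, and its skeleton is the one the paper uses: reduce to the variational representation of Theorem~\ref{prop:Q=}, prove a strict pointwise inequality $B_\al(X;p)(t)<B_\al(Y;p)(t)$, and then exploit attainment of the infimum defining $Q_\al(Y;p)$ at some $t_\circ$ to conclude $Q_\al(X;p)\le B_\al(X;p)(t_\circ)<B_\al(Y;p)(t_\circ)=Q_\al(Y;p)$. Where you genuinely diverge is in the proof of the strict pointwise inequality: you couple $X$ and $Y$ via \eqref{eq:st iff} and show that strict stochastic dominance forces $\P(X_1<Y_1,\,Y_1>t)>0$ for every $t$, whence the integrands $(X_1-t)_+^\al\le(Y_1-t)_+^\al$ differ on a set of positive measure. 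The paper instead stays at the level of distributions: it uses the left-continuity of $u\mapsto\P(X\ge u)$ to produce a whole interval $[w,u]$ on which the tails of $X$ and $Y$ differ, and then the Fubini representation $\E(X-t)_+^\al=\int_\R\al(z-t)_+^{\al-1}\P(X\ge z)\,\dd z$ (and its exponential analogue) to convert that into a strict gap in the generalized moments. Your coupling route is arguably more elementary and self-contained; the paper's route avoids introducing a coupling and reuses the tail-integral identity it needs elsewhere. Likewise, you re-derive attainment by a coercivity argument, whereas the paper simply invokes Proposition~\ref{prop:att} (noting there is no circularity); your observation that $X\sst Y$ forces $x_{*,Y}=\infty$, which rules out the non-attainment case of that proposition for $\al=\infty$, is exactly the point the paper makes as well. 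One small caveat: for $\al=\infty$ your claim that $B_\infty(X;p)(t)<B_\infty(Y;p)(t)$ for \emph{all} $t\in(0,\infty)$ should be restricted to those $t$ with $\frac1t\in\La_Y$ (otherwise both sides may equal $+\infty$); since the minimizer $t_\circ$ necessarily satisfies $B_\infty(Y;p)(t_\circ)<\infty$, this does not affect the conclusion.
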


This proposition will 
be useful in the proof of Proposition~\ref{prop:al<1} below. 

Given the positive homogeneity, it is clear that the subadditivity and convexity properties of $Q_\al(X;p)$ easily follow from each other. 
In the statements in Theorem~\ref{th:coher} on these two mutually equivalent properties, it was assumed that $\al\in[1,\infty]$. One may ask whether this restriction is essential. The answer to this question is ``yes'': 

\begin{proposition}\label{prop:al<1}
There are r.v.'s $X$ and $Y$ such that 
for all $\al\in
[0,1)$ and all $p\in(0,1)$ 
one has $Q_\al(X+Y;p)>Q_\al(X;p)+Q_\al(Y;p)$,  
so that $Q_\al(X;p)$ is not subadditive (and, equivalently, not convex) in $X$. 
\end{proposition}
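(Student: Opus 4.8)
The plan is to produce one explicit pair $(X,Y)$ and to verify the strict inequality by means of the infimum representation \eqref{eq:Q=inf}--\eqref{eq:B} of Theorem~\ref{prop:Q=}. Before choosing the pair, I would record a structural constraint that dictates its shape. If $X$ and $Y$ were bounded above, then $x_{*,X+Y}\le x_{*,X}+x_{*,Y}$, and parts \ref{it:Q le x_*} and \ref{it:Q to x_*} of Proposition~\ref{prop:inverse} would give
\[
\lim_{p\downarrow0}\big[Q_\al(X+Y;p)-Q_\al(X;p)-Q_\al(Y;p)\big]
=x_{*,X+Y}-x_{*,X}-x_{*,Y}\le0,
\]
so strict superadditivity could not persist as $p\downarrow0$. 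Hence the counterexample must use r.v.'s unbounded above. Since $Q_\al$ is defined only on $\XX_\al$, the right tail must be light enough that $\E X_+^\al<\infty$ for every $\al\in[0,1)$, yet heavy enough to force strict superadditivity; this pins the tail at the critical order $\P(X>z)\asymp\ell(z)/z$ with $\ell$ slowly varying and $\ell(z)\to\infty$ (for instance $\ell=\ln$), for which indeed $X\in\XX_\al$ exactly when $\al\in[0,1)$.

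Next I would reduce the two-variable statement to a one-variable scaling inequality by a convenient choice of the joint law. Let $Z\ge0$ be unbounded with the critical tail above, let $\xi$ be a Bernoulli$(1/2)$ r.v.\ independent of $Z$, and set $X:=Z\,\ii{\xi=1}$ and $Y:=Z\,\ii{\xi=0}$. Then $X$ and $Y$ share the common law $\tfrac12\delta_0+\tfrac12\mathcal L(Z)$, while $X+Y=Z$, so that $Q_\al(X+Y;p)=Q_\al(Z;p)$. For $t\ge0$ one has $\E(X-t)_+^\al=\tfrac12\E(Z-t)_+^\al$, whence $B_\al(X;p)(t)=B_\al(Z;2p)(t)$ for $t\ge0$ by \eqref{eq:B}, and $B_\al(X;p)(t)\ge B_\al(Z;2p)(t)$ for all $t$. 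Consequently, once one checks that the minimizer in \eqref{eq:Q=inf} for $B_\al(Z;2p)(\cdot)$ is nonnegative (which holds once $2p$ is small enough), \eqref{eq:Q=inf} yields $Q_\al(X;p)=Q_\al(Z;2p)$, and the assertion becomes the scaling inequality
\[
Q_\al(Z;p)>2\,Q_\al(Z;2p),\qquad \al\in[0,1),\ p\in(0,\tfrac12).
\]

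Then I would split on $p$. For $p$ bounded away from $0$ (in particular all $p\in[\tfrac12,1)$, where $2p\ge1$), I would argue directly: $Q_\al(Z;p)>0$ while, because of the atom of mass $\tfrac12$ that the law of $X$ carries at $0$, a single evaluation of $B_\al(X;p)$ at a well-chosen $t\ge0$ already gives $2\,Q_\al(X;p)<Q_\al(Z;p)$. For $p$ small I would prove the scaling inequality from $Q_\al(Z;q)=\inf_{t}\big[t+\|(Z-t)_+\|_\al\,q^{-1/\al}\big]$: a pure Pareto tail $1/z$ makes the inequality degenerate to the equality $Q_0(Z;p)=2\,Q_0(Z;2p)$, and the slowly varying boost $\ell(z)\to\infty$ tips it strictly, uniformly in $\al$. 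The borderline case $\al=0$ can be made completely explicit by prescribing the quantile function itself, e.g.\ $Q_0(Z;p)=(1-\ln p)/p$ (which corresponds precisely to a critical tail with $\ell=\ln$), for which $Q_0(Z;p)/Q_0(Z;2p)=2\,(1-\ln p)/(1-\ln2-\ln p)>2$ for every $p\in(0,\tfrac12)$.

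The hard part is the uniformity of the scaling inequality $Q_\al(Z;p)>2\,Q_\al(Z;2p)$ simultaneously over all $\al\in[0,1)$ and all $p$, precisely because of the tension exposed in the first step: the requirement $X\in\XX_\al$ for every $\al<1$ forces the tail to sit exactly at the critical order, so the strict superadditivity is only marginal and must be extracted from the slowly varying factor rather than from a power gap. Turning the small-$p$ heuristic into an exact inequality valid for every admissible $p$, and controlling the $\al$-dependence of the minimizer in \eqref{eq:Q=inf} (including the sign check that makes the reduction $Q_\al(X;p)=Q_\al(Z;2p)$ exact), is the delicate step; here the concavity of $Q_\al(Z;\cdot)$ in $p^{-1/\al}$ from Theorem~\ref{th:coher} and the strict monotonicity of Proposition~\ref{cor:str mono X} are the natural tools for reducing the verification to boundary behavior.
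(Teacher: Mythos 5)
Your proposal is not a complete proof: its central step is exactly the part you leave open. Everything is reduced to the scaling inequality $Q_\al(Z;p)>2\,Q_\al(Z;2p)$ holding \emph{simultaneously} for all $\al\in[0,1)$ and all $p\in(0,\tfrac12)$, but you only verify it for $\al=0$ (via an explicit quantile function) and heuristically for small $p$; you yourself flag the uniform version as ``the delicate step.'' Since that inequality is the entire content of the counterexample, the argument is a plan rather than a proof. Two auxiliary reductions are also unfinished: the identity $Q_\al(X;p)=Q_\al(Z;2p)$ is justified only when the minimizer in \eqref{eq:Q=inf} is nonnegative, which you claim only for ``$2p$ small enough,'' and the range $p\in[\tfrac12,1)$ (where $Q_\al(Z;2p)$ is not even defined) is settled by the unsubstantiated assertion that ``a single evaluation of $B_\al(X;p)$ at a well-chosen $t\ge0$'' suffices --- an evaluation of $B_\al(X;p)$ only bounds $Q_\al(X;p)$ from above, and you would still need a matching \emph{lower} bound on $Q_\al(Z;p)$, which is not supplied. (Also, your opening ``structural'' argument that bounded r.v.'s cannot work is not conclusive: the limit of the superadditivity gap being $\le0$ as $p\downarrow0$ does not preclude the gap being $>0$ for every fixed $p>0$.)

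The missing idea is that no quantitative comparison of quantile bounds is needed at all: it suffices to compare $X+Y$ with $2X$ in the strict stochastic order and then invoke Proposition~\ref{cor:str mono X} together with positive homogeneity. That is the paper's route: take $X,Y$ i.i.d.\ Pareto with $\P(X\ge x)=(1+x_+)^{-1}$ (so $X\in\XX_\al$ precisely because $\al<1$), compute explicitly that $\P(X+Y\ge x)-\P(2X\ge x)=2(2+x)^{-2}\ln(1+x)>0$ for $x>0$, conclude $2X\sst X+Y$, and then for $\al\in(0,1)$ get $Q_\al(X+Y;p)>Q_\al(2X;p)=2Q_\al(X;p)$, with $\al=0$ handled separately by inverting the strictly ordered tail functions. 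Ironically, your own construction admits the same shortcut: with $\P(Z\ge z)=L(z)/z$ and $L$ strictly increasing one has $\P(2X\ge x)=\tfrac12\P(Z\ge x/2)=L(x/2)/x<L(x)/x=\P(X+Y\ge x)$ for $x>0$, so $2X\sst X+Y$ and Proposition~\ref{cor:str mono X} --- which you mention only in passing at the very end --- would close the argument with no scaling analysis, no case split in $p$, and no control of minimizers.
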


It is well known (see e.g.\ \cite{artzner.etal.99,pflug00,rocka-ur02}) that $Q(X;p)=Q_0(X;p)$ is not subadditive in $X$; it could therefore have been expected that 
$Q_\al(X;p)$ will not be subadditive in $X$ if $\al$ is close enough to $0$. 
In a quite strong and specific sense, Proposition~\ref{prop:al<1} justifies such expectations.

\begin{center}***\end{center}

In Figure~\ref{fig:}, 
the graphs of the quantile bounds $Q_\al(X;p)$ as functions of $\al\in(0,20]$ are given for (a) $p=0.05$ (left panel) and (b) $p=0.01$ (right panel), for the case when $X$ has the Gamma distribution with the scale parameter equal $1$ and values $0.5$, $1$, $2$, and $5$ of the shape parameter (say $a$) -- shown respectively in colors red, green, blue, and black. 
\begin{figure}[ht]
\centering
\subfigure[
$p=0.05$]{
    \includegraphics
    [width=2.5in]{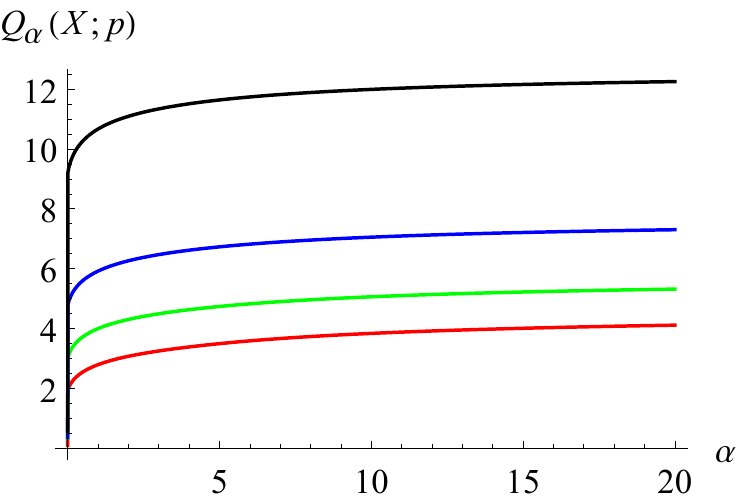}
    \label{fig:subfig1}
\rule[-10pt]{0pt}{0pt}    
}
\hspace*{1cm}
\subfigure[
$p=0.01$]{
    \includegraphics
    [width=2.5in]{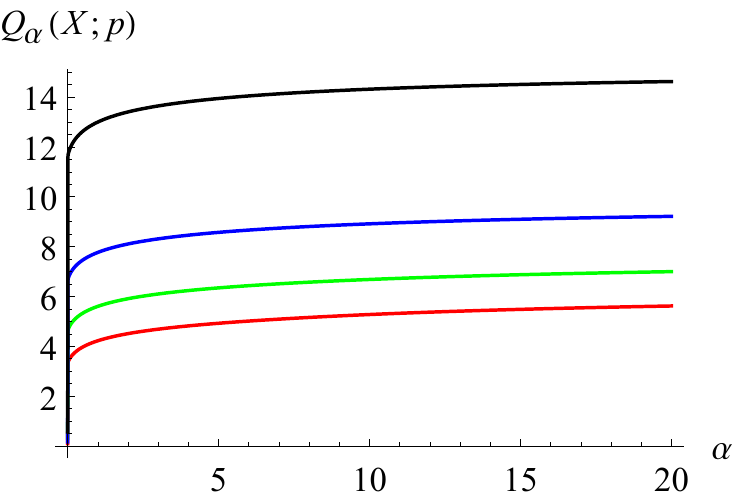}
    \label{fig:subfig2}
\rule[-10pt]{0pt}{0pt}     
}
\caption[Optional caption for list of figures]{
Graphs of the quantile bounds $Q_\al(X;p)$ as functions of $\al$. 
}
\label{fig:}
\end{figure}

These graphs illustrate the general 
monotonicity properties of $Q_\al(X;p)$ in $\al$, $X$, and $p$ stated in Theorem~\ref{th:coher}; recall here that the Gamma distribution is (i) stochastically increasing with respect to the shape parameter $a$ and (ii) close to normality for large values of $a$. 
It is also seen that $Q_\al(X;p)$ varies rather little in $\al$, so that the quantile bounds $Q_\al(X;p)$ are not too far from the corresponding true quantiles $Q(X;p)=Q_0(X;p)$; cf.\ the somewhat similar observation made in \cite[Theorem~2.8.]{T2}. 

In fact, it can be shown that for small values of $p$ the quantile bounds $Q_\al(X;p)$ are relatively close to the true quantiles $Q_0(X;p)$ whenever the right tail of the distribution of $X$ is light enough (depending on $\al$) and regular enough. 
One possible formalization of this general thesis is provided by Proposition~\ref{prop:Q close} below, which is based on \cite[Theorem~4.2 and Remark 4.3]{pin98}. We shall need pertinent definitions from that paper. 

Take any $r\in(0,\infty]$. 
Given a positive function $q$ on $\R$, let us say that $q(x)$ is {\em like} $x^{-r}$ if there is a positive twice differentiable function $q_0$ on $\R$ such that 
\begin{equation}\label{like-x-r-eq}
	q(x)\underset{x\to\infty}\sim q_0(x)\quad\text{and}\quad
	\lim_{x\to\infty} {q_0(x)q_0''(x)\over{q_0'}(x)^2}=1+{1\over r};  
\end{equation}
as usual, we write $f\sim g$ if $f/g\to1$.
For any real $r\ne0$, the second relation in \eqref{like-x-r-eq} can be rewritten as 
\begin{equation*}
	\lim_{x\to\infty} \Big(\frac1{(\ln q_0)'(x)}\Big)'=-\frac1r, \tag{\ref{like-x-r-eq}a}
\end{equation*}
which successively implies $\tfrac1{(\ln q_0)'(x)}\sim-\tfrac xr$, $(\ln q_0)'(x)\sim-\tfrac rx$, $\ln q_0(x)\sim-r\ln x$, and hence 
\begin{equation*}
q(x)=x^{-r+o(1)} \quad\text{as}\quad x\to\infty. 
\tag{\ref{like-x-r-eq}b}	
\end{equation*}
In particular, given any $r\in(0,\infty)$, $s\in\R$, and $C\in(0,\infty)$, 
\begin{equation*}
\text{if 
$q(x)\underset{x\to\infty}\sim Cx^{-r}\ln^s x$,  
then $q(x)$ is like $x^{-r}$.} 	
\end{equation*}
Also, given any $C$ and $C_1$ in $(0,\infty)$, 
$\ga\in(0,\infty)$, and $s\in(1,\infty)$, 
\begin{equation*}
\text{if\ \  
$q(x)\underset{x\to\infty}\sim C\exp\{-C_1 x^\ga\}$\ \ or 
$q(x)\underset{x\to\infty}\sim C\exp\{-C_1 \ln^s x\}$,\ \ 
then $q(x)$ is like $x^{-\infty}$.}
\end{equation*}
Moreover, if $q(x)$ is like $x^{-\infty}$, then for all $r\in(0,\infty)$ one has $q(x)=o(x^{-r})$ as $x\to\infty$.



\begin{proposition}\label{prop:Q close} 
\ 

\begin{enumerate}[(i)]
	\item If the r.v.\ $X$ is bounded from above -- that is, $x_*<\infty$, then 
\begin{equation}\label{eq:x_*<infty}
	\lim_{p\downarrow0}Q_\al(X;p)=\lim_{p\downarrow0}Q_0(X;p)=x_*\in\R.  
\end{equation}
	\item If $\P(X\ge x)$ is like  $x^{-r}$ for some $r\in(\al,\infty]$, then $x_*=\infty$, $Q_\al(X;p)\underset{p\downarrow0}\longrightarrow\infty$, and 
\begin{equation}\label{eq:Q close}
	Q_\al(X;p)\underset{p\downarrow0}\sim K(r,\al)\,Q_0(X;p), 
\end{equation} 
where
\begin{equation}\label{eq:K}
	K(r,\al):=
	\left\{
	\begin{alignedat}{2}
	&1&&\text{\; if\; $r=\infty$}, \\
	&c_{r,\al}^{1/r}&&\text{\; if\; $r<\infty$,}
	\end{alignedat}
	\right.
\end{equation}  
\begin{equation*}
	c_{r,\al}:=\frac{\Ga(\al+1)\Ga (r-\al)}{\Ga (r)} \frac{r^r}{\al^\al  (r-\al)^{r-\al} }, 
\end{equation*}
and $\Ga$ is the Gamma function, given by the formula $\Ga(\be)=\int_0^\infty u^{\be-1}e^{-u}\dd u$ for $\be>0$.   
\end{enumerate}
\end{proposition}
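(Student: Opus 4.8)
Part (i) requires essentially no new work: by part~\eqref{it:Q to x_*} of Proposition~\ref{prop:inverse}, $Q_\al(X;p)\to x_*$ as $p\downarrow0$ for every value of the spectrum parameter, and in particular for $\al$ and for $0$. When $x_*<\infty$ this common limit is the finite real number $x_*$, which is exactly \eqref{eq:x_*<infty}.

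For Part (ii) the plan is first to settle the qualitative assertions and then to invert a multiplicative tail asymptotic. Since $\P(X\ge x)$ is like $x^{-r}$, we have $\P(X\ge x)\sim q_0(x)>0$, so $\P(X\ge x)>0$ for all large $x$ and therefore $x_*=\sup\supp X=\infty$; hence $Q_\al(X;p)\to x_*=\infty$ as $p\downarrow0$, again by part~\eqref{it:Q to x_*} of Proposition~\ref{prop:inverse}. The quantitative equivalence \eqref{eq:Q close} rests on the single external input
\[
	P_\al(X;x)\underset{x\to\infty}\sim c_{r,\al}\,\P(X\ge x),
\]
which is the content of \cite[Theorem~4.2 and Remark~4.3]{pin98} under the present hypotheses, with the convention that the constant is $1$ when $r=\infty$, matching $K(\infty,\al)=1$ in \eqref{eq:K}.

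The heart of the argument is to pass from this equivalence for the tail bounds to the equivalence \eqref{eq:Q close} for their generalized inverses. Write $m(x):=\P(X\ge x)=P_0(X;x)$. Being like $x^{-r}$, $m$ is regularly varying at $\infty$ with index $-r$ when $r\in(\al,\infty)$ and rapidly varying (index $-\infty$) when $r=\infty$; consequently the tail--quantile function $U(t):=Q_0(X;1/t)=\inf\{x\colon m(x)<1/t\}$ is regularly varying in $t$ with index $1/r$ (slowly varying when $r=\infty$), so that $U(\theta s)/U(s)\to\theta^{1/r}$ as $s\to\infty$ for each fixed $\theta>0$, the exponent being $0$ when $r=\infty$. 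Fix $\vp>0$. Using $c_{r,\al}(1-\vp)\,m(x)\le P_\al(X;x)\le c_{r,\al}(1+\vp)\,m(x)$ for all large $x$, and recalling from \eqref{eq:P decr in x} and Proposition~\ref{prop:P,+} that each of $\{x\colon P_\al(X;x)<p\}$ and $\{x\colon m(x)<\cdot\}$ is an upper ray whose left endpoint tends to $\infty$ as $p\downarrow0$, one obtains the sandwich
\[
	Q_0\big(X;\tfrac{p}{c_{r,\al}(1-\vp)}\big)\le Q_\al(X;p)\le Q_0\big(X;\tfrac{p}{c_{r,\al}(1+\vp)}\big)
\]
for all small enough $p$. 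Dividing through by $Q_0(X;p)=U(1/p)$ and letting $p\downarrow0$ gives, via the regular variation of $U$, the bounds $\big(c_{r,\al}(1-\vp)\big)^{1/r}\le\liminf$ and $\limsup\le\big(c_{r,\al}(1+\vp)\big)^{1/r}$ for the ratio $Q_\al(X;p)/Q_0(X;p)$; letting $\vp\downarrow0$ yields \eqref{eq:Q close} with $K(r,\al)=c_{r,\al}^{1/r}$ as in \eqref{eq:K}.

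The main obstacle is precisely this inversion step, i.e.\ converting the multiplicative factor $c_{r,\al}$ on the tails into the factor $c_{r,\al}^{1/r}$ on the quantiles. The two delicate points are: checking that the ``like $x^{-r}$'' hypothesis, which is expressed through the smooth approximant $q_0$, really places the possibly discontinuous tail $m$ within the scope of the regular-variation inversion theorem -- handled by passing through $m\sim q_0$ and using that an upper ray is determined by its left endpoint; and treating the boundary case $r=\infty$, where $m$ is rapidly varying, $U$ is slowly varying, and $U(\theta s)/U(s)\to1$ forces $K(\infty,\al)=1$. Everything else is the bookkeeping of the sandwich together with an appeal to the already established monotonicity and inversion properties of $P_\al$ and $Q_\al$.
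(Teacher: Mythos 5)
Your argument is correct, and it shares the paper's overall strategy: part (i) is read off from part~\eqref{it:Q to x_*} of Proposition~\ref{prop:inverse}, and part (ii) reduces everything to the single imported asymptotic $P_\al(X;x)\sim c_{r,\al}\,\P(X\ge x)$ from \cite{pin98} \big(the paper's \eqref{eq:P sim cq}\big), followed by a two-sided comparison of the level sets of $P_\al$ and $P_0$. Where you genuinely diverge is in the inversion step. The paper stays self-contained: it works with the points $x_{\al,p}^{\pm}=Q_\al(X;p)\pm1$ and the defining inequalities $P_\al(X;x_{\al,p}^+)<p\le P_\al(X;x_{\al,p}^-)$, and extracts the exponent $1/r$ from a hand-rolled two-variable relation $q(x)/q(y)\sim(y/x)^{r+o(1)}$ \big(its \eqref{eq:rat asymp}, with the exponent replaced by $\rho(x,y)\to\infty$ when $r=\infty$\big). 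You instead note that the ``like $x^{-r}$'' hypothesis gives $(\ln q_0)'(x)\sim-r/x$, hence regular variation of the tail with index $-r$ (rapid variation for $r=\infty$), sandwich $Q_\al(X;p)$ between $Q_0$ evaluated at $p/(c_{r,\al}(1\mp\vp))$, and finish by the standard inversion theorem $U(\th s)/U(s)\to\th^{1/r}$ for the tail--quantile function. The two devices encode the same information; yours is shorter and more modular but imports the regular-variation inversion theorem as an external black box, whereas the paper derives what it needs from \eqref{like-x-r-eq} directly. Two minor points: the constant in the tail asymptotic for $r=\infty$ is $\Ga(\al+1)(e/\al)^{\al}$ rather than $1$ --- what actually rescues that case is exactly your later observation that slow variation of $U$ annihilates any finite positive constant, so the slip is harmless; and since the hypothesis $r\in(\al,\infty]$ forces $\al<\infty$, no separate treatment of $P_\infty$ is ever required in part (ii).
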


\vspace*{6pt}
\noindent
\begin{parbox}{.38\textwidth}
	{ \includegraphics[width=.36\textwidth]{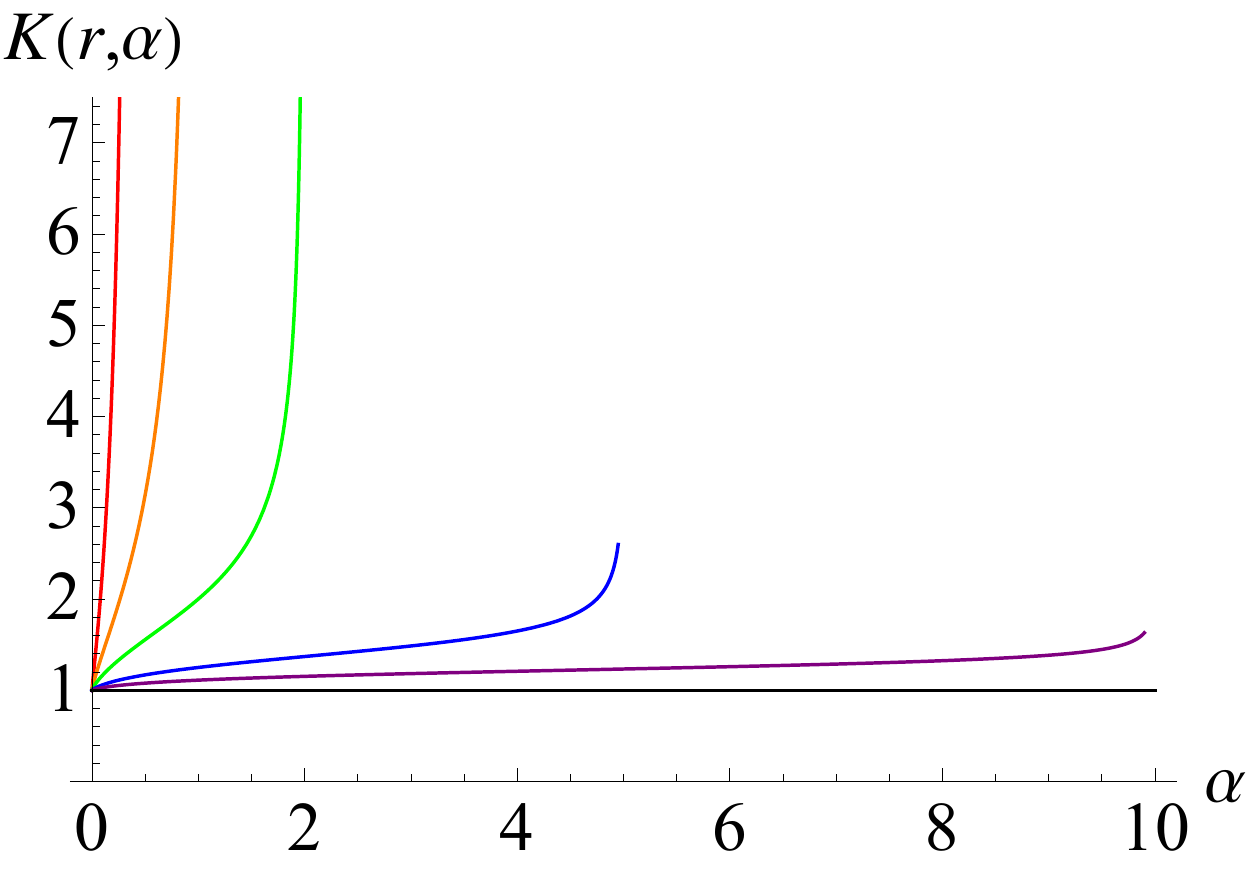} }
\end{parbox}
\begin{parbox}{.6\textwidth}
{
Graphs $\big\{\big(\al,K(r,\al)\big)\colon0<\al\le\min(10,0.99r),\,K(r,\al)\le7.5\big\}$ are shown here on the left: for $r=\frac12$ (red), $r=1$ (orange), $r=2$ (green), $r=5$ (blue), $r=10$ (purple), and $r=\infty$ (black). 
It is seen that, if the (right) tail of the distribution of $X$ is very heavy -- that is, if $r$ is comparatively small, then even for $\al=1$ the quantile bound $Q_\al(X;p)$ is much greater (for small $p$) than the true quantile $Q_0(X;p)$. However, if the tail is not very heavy  -- that is, if $r$ is not so small, then the graph of $K(r,\al)$ is very flat -- that is, $Q_\al(X;p)$ varies little with $\al$ (if $p$ is small). 
}
\end{parbox}

\vspace*{6pt}
Concerning the relevance of the condition $p\downarrow0$ in the asymptotic relations \eqref{eq:x_*<infty} and \eqref{eq:Q close} in Proposition~\ref{prop:Q close}, note that small values of $p$ are of particular importance in financial practice. Indeed, values of $p$ commonly used with the so-called value-at-risk (VaR) measure -- equal to the quantile $Q_0(X;p)$, where $X$ is the amount of financial loss --    
are 1\% and 5\% for one-day and two-week horizons, respectively \cite{pearson02}.  

As for the condition that $X$ be bounded from above in part (i) of Proposition~\ref{prop:Q close}, it is obviously fulfilled, in particular, whenever the r.v.\ $X$ takes only finitely many values, as is assumed e.g.\ in \cite{artzner.etal.99}. 

By (\ref{like-x-r-eq}b), if $\P(X\ge x)$ is like  $x^{-r}$ for some r.v.\ $X$ and some real $r\ne0$, 
%
then necessarily $r>0$ -- which is in accordance with the assumption $r\in(0,\infty]$ made above concerning \eqref{like-x-r-eq}. 

Usual statistical families of continuous distributions, including the normal, log-normal, gamma, beta, Student, and Pareto families, 
are covered by 
Proposition~\ref{prop:Q close}. 
More specifically, part (i) of Proposition~\ref{prop:Q close} applies to the beta family of distributions, including the uniform distribution. Part (ii) of Proposition~\ref{prop:Q close} applies (with $r=\infty$) to the normal, log-normal, and gamma families, including the exponential family; the Student family (with $r=d$, where $d$ is the number of degrees of freedom); and Pareto family \big(with $r=\be$, assuming that $\P(X\ge x)=(a/x)^\be$ for some $\be$ and $a$ in $(0,\infty)$ and all real $x\ge a$\big). 
However, note that under the condition that $\P(X\ge x)$ is like  $x^{-r}$, as in part~(ii) of Proposition~\ref{prop:Q close}, \eqref{eq:Q close} is guaranteed to hold only for $\al\in[0,r)$. 
Also, in the case when $\P(X\ge x)$ is like  $x^{-r}$ for some $r\in(0,\infty)$, 
%
\eqref{eq:Q close} cannot possibly hold for any $\al\in(r,\infty]$ -- because then, by part (ii) of Proposition~\ref{prop:P}, \eqref{eq:mono in al}, and \eqref{eq:Q}, $Q_\al(X;p)=\infty$. 
So, the general tendency is that, the lighter the right tail of the distribution of $X$, the wider is the range of values of $\al$ for which \eqref{eq:Q close} holds. 
Moreover, it appears that, the lighter the right tail of the distribution of $X$, the closer is the constant $K(r,\al)$ in \eqref{eq:Q close} to $1$ and, more generally, the closer is the quantile bound $Q_\al(X;p)$ to the true quantile $Q_0(X;p)$. 

One can also show, using \cite[Remark~3.13]{pin98} or \cite[Remark~1.4]{binom}, that \eqref{eq:Q close} will hold -- with $r=\infty$ and hence $K(r,\al)=1$ -- for all $\al\in[0,\infty]$ and usual statistical families of discrete distributions, including the Poisson, geometric, and, more generally negative binomial families -- because the right tails of those distributions are light enough. 
On the other hand, \eqref{eq:x_*<infty} will hold for any distributions with a bounded support, including the binomial and hypergeometric distributions. 

More examples of distributions to which Proposition~\ref{prop:Q close} is applicable can be found in \cite{pin99}. 

\begin{center}***\end{center}

\section{Computation of the tail probability and quantile bounds} \label{comput}

\subsection{Computation of $P_\al(X;x)$}  \label{P comput}
The computation of $P_\al(X;x)$ in the case $\al=0$ is straightforward, in view of the equality in \eqref{eq:0<al}. 
If $x\in[x_*,\infty)$, then the value of $P_\al(X;x)$ is easily found by part \eqref{it:x ge x_*} of Proposition~\ref{prop:P,+}. So, in the rest of this subsection it may be assumed that  $\al\in(0,\infty]$ and $x\in(-\infty,x_*)$.  

In the case when $\al\in(0,\infty)$, using \eqref{eq:P new}, 
the inequality 
\begin{equation}\label{eq:domin}
	\big(1+\la(X-x)/\al\big)_+^\al
	\le2^{(\al-1)_+}\big(\la^\al X_+^\al+(\al-\la x)_+^\al\big)/\al^\al, 
\end{equation}
the condition $X\in\XX_\al$, 
and dominated convergence, one sees that $A_\al(X;x)(\la)$ is continuous in $\la\in(0,\infty)$ and right-continuous in $\la$ at $\la=0$ (assuming the definition \eqref{eq:A new} for $\la=0$ as well), and hence  
\begin{equation}\label{eq:P,tA}
P_\al(X;x)=\inf_{\la\in[0,\infty)}A_\al(X;x)(\la).    
\end{equation} 
Similarly, using in place of \eqref{eq:domin} the inequality $e^{\la X}\le1+e^{\la_0 X}$ whenever $0\le\la\le\la_0$, one can show that $A_\infty(X;x)(\la)$ is continuous in $\la\in\La_X$ \big(recall \eqref{eq:La_X}\big) and right-continuous in $\la$ at $\la=0$, so that \eqref{eq:P,tA} holds for $\al=\infty$ as well -- provided that $X\in\XX_\infty$. Moreover, by the Fatou lemma for the convergence in distribution \cite[Theorem~5.3]{billingsley}, $A_\infty(X;x)(\la)$ is lower-semicontinuous in $\la$ at $\la=\la_*:=\sup\La_X$ even if $\la_*\in\R\setminus\La_X$. It then follows by the convexity of $A_\infty(X;x)(\la)$ in $\la$ that $A_\infty(X;x)(\la)$ is left-continuous in $\la$ at $\la=\la_*$ whenever $\la_*\in\R$; at that, the natural topology on the set $[0,\infty]$ is used, as it is of course possible that $A_\infty(X;x)(\la_*)=\infty$.

Since $x\in(-\infty,x_*)$, one can find some $y\in(x,\infty)$ such that $\P(X\ge y)>0$ (of course, necessarily $y\in(x,x_*]$); so, one can introduce 
\begin{equation}\label{eq:lamax}
\la_{\max}:=\la_{\max,\al}:=\la_{\max,\al,X}:=
\left\{
\begin{alignedat}{2}
&\frac\al{y-x}\,\Big(\frac1{\P(X\ge y)^{1/\al}}-1\Big) && \text{ if } \al\in(0,\infty), \\ 
&\frac1{y-x}\,\ln\frac1{\P(X\ge y)} && \text{ if } \al=\infty.      
\end{alignedat}
\right. 
\end{equation}
Then, by \eqref{eq:A new}, 
$A_\al(X;x)(\la)\ge\E\big(1+\la(X-x)/\al\big)_+^\al\ii{X\ge y}
\ge\big(1+\la(y-x)/\al\big)^\al\P(X\ge y)>1$ if $\al\in(0,\infty)$ and $\la\in(\la_{\max,\al},\infty)$, and $A_\infty(X;x)(\la)\ge\E e^{\la(X-x)}\ii{X\ge y}
	\ge e^{\la(y-x)}\P(X\ge y)>1$ if $\la\in(\la_{\max,\infty},\infty)$.
So, for all $\al\in(0,\infty]$ one has $A_\al(X;x)(\la)>1\ge P_\al(X;x)=\inf_{\la\in(0,\infty)}A_\al(X;x)(\la)$ provided that $\la\in(\la_{\max,\infty},\infty)$, and hence 
\begin{equation}\label{eq:P,tA,lamax}
P_\al(X;x)=\inf_{\la\in[0,\la_{\max,\al}]}A_\al(X;x)(\la),\quad\text{if}\quad
\text{$\al\in(0,\infty]$ and $x\in(-\infty,x_*)$}. 
\end{equation} 
Therefore and because $\la_{\max,\al}<\infty$, the minimization of $A_\al(X;x)(\la)$ in $\la$ in \eqref{eq:P,tA,lamax} in order to compute the value of $P_\al(X;x)$ can be done effectively if $\al\in[1,\infty]$, because in this case $A_\al(X;x)(\la)$ is convex in $\la$. 
At that, the positive-part moments $\E\big(1+\la(X-x)/\al)_+^\al$, which express $A_\al(X;x)(\la)$ for $\al\in(0,\infty)$ in accordance with \eqref{eq:A new}, 
can be efficiently computed using formulas in \cite{positive}; cf.\ e.g.\ \cite[Section~3.2.3]{pin-hoeff}. 
Of course, for specific kinds of distributions of the r.v.\ $X$, 
more explicit expressions for the positive-part moments 
can be used. 

In the remaining case, when $\al\in(0,1)$, the function $\la\mapsto A_\al(X;x)(\la)$ cannot in general be ``convexified'' by any monotonic transformations in the domain and/or range of this function, and the set of minimizing values of $\la$ does not even have to be connected, in the following rather strong sense:  
 
\begin{proposition}\label{prop:two min,P} 
For any $\al\in(0,1)$, $p\in(0,1)$, and $x\in\R$, there is a r.v.\ $X$ (taking three distinct values) such that $P_\al(X;x)=p$ and the infimum $\inf_{\la\in(0,\infty)}$ in \eqref{eq:P new} is attained at precisely two distinct values of $\la\in(0,\infty)$.  
\end{proposition}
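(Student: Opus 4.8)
The plan is to reduce to the case $x=0$ and then exhibit an explicit three-point distribution for which the function $\la\mapsto A_\al(X;0)(\la)=\E(1+\la X/\al)_+^\al$ has a ``double-dip'' shape. By the translation invariance in Theorem~\ref{th:P}, $A_\al(X+x;x)(\la)=A_\al(X;0)(\la)$ for every $\la$, so it suffices to build a three-point r.v.\ $X$ supported on $a_1=-s_1<a_2=-s_2<0<a_3=b$ (with $s_1>s_2>0$ and $b>0$) achieving $P_\al(X;0)=p$ with exactly two minimizing $\la$; translating by $x$ then yields a r.v.\ $X+x$ taking the three distinct values $x-s_1,\,x-s_2,\,x+b$, with the same minimizers and the same value $p$.

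The key structural observation is that, writing $A_\al(X;0)(\la)=\sum_{i=1}^3 p_i(1+\la a_i/\al)_+^\al$, each summand is \emph{strictly concave} in $\la$ on the range where it is positive (its second derivative equals $\tfrac{a_i^2(\al-1)}{\al}(1+\la a_i/\al)^{\al-2}<0$ since $\al<1$), while the two negative atoms switch their positive parts off at the thresholds $\la_1^*:=\al/s_1<\la_2^*:=\al/s_2$. Thus $A_\al(X;0)(\cdot)$ is continuous and strictly concave on each of the three intervals $(0,\la_1^*)$, $(\la_1^*,\la_2^*)$, $(\la_2^*,\infty)$, and at each threshold its left derivative tends to $-\infty$ (because $(1+\la a_i/\al)^{\al-1}\to+\infty$ as the base tends to $0^+$ and $\al-1<0$) whereas its right derivative is finite. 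These downward-pointing corners at $\la_1^*$ and $\la_2^*$ are precisely what allows two separate local minima; this is the feature unavailable when $\al\ge1$, where $h_\al$ is convex and $A_\al(X;0)(\cdot)$ has a connected set of minimizers.

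I would then take $\la_1^*$ and $\la_2^*$ as the two candidate minimizers and impose $A_\al(X;0)(\la_1^*)=A_\al(X;0)(\la_2^*)=p$. Evaluating at the thresholds (where the relevant positive parts vanish) gives the two equations $p_3(1+b/s_2)^\al=p$ and $p_2(1-s_2/s_1)^\al+p_3(1+b/s_1)^\al=p$, together with $p_1+p_2+p_3=1$. Granting these, strict concavity on each interval forces the conclusion: on $(0,\la_1^*)$ the graph lies above the chord joining $(0,1)$ and $(\la_1^*,p)$, which is $\ge p$ with value $p$ only at $\la_1^*$; on $(\la_1^*,\la_2^*)$ strict concavity with equal endpoint values $p$ gives $A_\al(X;0)>p$ in the interior; and on $(\la_2^*,\infty)$ the single remaining term $p_3(1+\la b/\al)^\al$ is strictly increasing from $p$. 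Hence $A_\al(X;0)\ge p$ everywhere, with equality exactly at the two distinct points $\la_1^*,\la_2^*$, so $P_\al(X;0)=p$ and the infimum in \eqref{eq:P new} is attained at precisely these two values.

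The main thing to nail down is the feasibility of the probabilities: one must produce $p_1,p_2,p_3\in(0,1)$ solving the above system for the given $\al\in(0,1)$ and $p\in(0,1)$. Solving, $p_3=p/(1+b/s_2)^\al$ and $p_2=\big(p-p_3(1+b/s_1)^\al\big)/(1-s_2/s_1)^\al$ are automatically positive (since $s_1>s_2$ makes $p_3(1+b/s_1)^\al<p$), so the only real constraint is $p_1=1-p_2-p_3>0$. I expect this to be the delicate step, to be handled by a limiting/parameter-count argument: letting $b\to\infty$ drives $p_3\to0$ and $p_2\to p\,(1-\rho^\al)/(1-\rho)^\al$ with $\rho=s_2/s_1$, and then letting $\rho\uparrow1$ drives this limit to $0$ (as $(1-\rho^\al)/(1-\rho)^\al\sim\al(1-\rho)^{1-\al}\to0$ because $\al<1$); hence for $s_2$ close enough to $s_1$ and $b$ large enough, $p_2+p_3<1$ and all three probabilities are admissible. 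This confirms the construction works for every $\al\in(0,1)$, every $p\in(0,1)$, and (after translation) every $x\in\R$.
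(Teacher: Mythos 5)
Your proof is correct and follows essentially the same route as the paper's: a three-point distribution with two negative atoms and one positive atom, piecewise strict concavity of $\la\mapsto A_\al(X;0)(\la)$ (a consequence of $\al<1$) forcing the minimum value $p$ to be attained exactly at the two kink points where the negative atoms switch off, and a continuity/limiting argument to make the probabilities admissible, followed by translation to general $x$. The paper merely specializes the parameters differently (values $-1,-b,b$ with $b\in(0,1)$ small, feasibility from $h(0+)=p<1$), whereas you send the ratio $s_2/s_1$ to $1$ and $b$ to $\infty$; both limit arguments are sound.
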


Proposition~\ref{prop:two min,P} is illustrated by 

\begin{ex}\label{ex:two min,P}\  

\vspace*{4pt} 
\noindent
\begin{parbox}{.38\textwidth}
	{ \includegraphics[width=.36\textwidth]{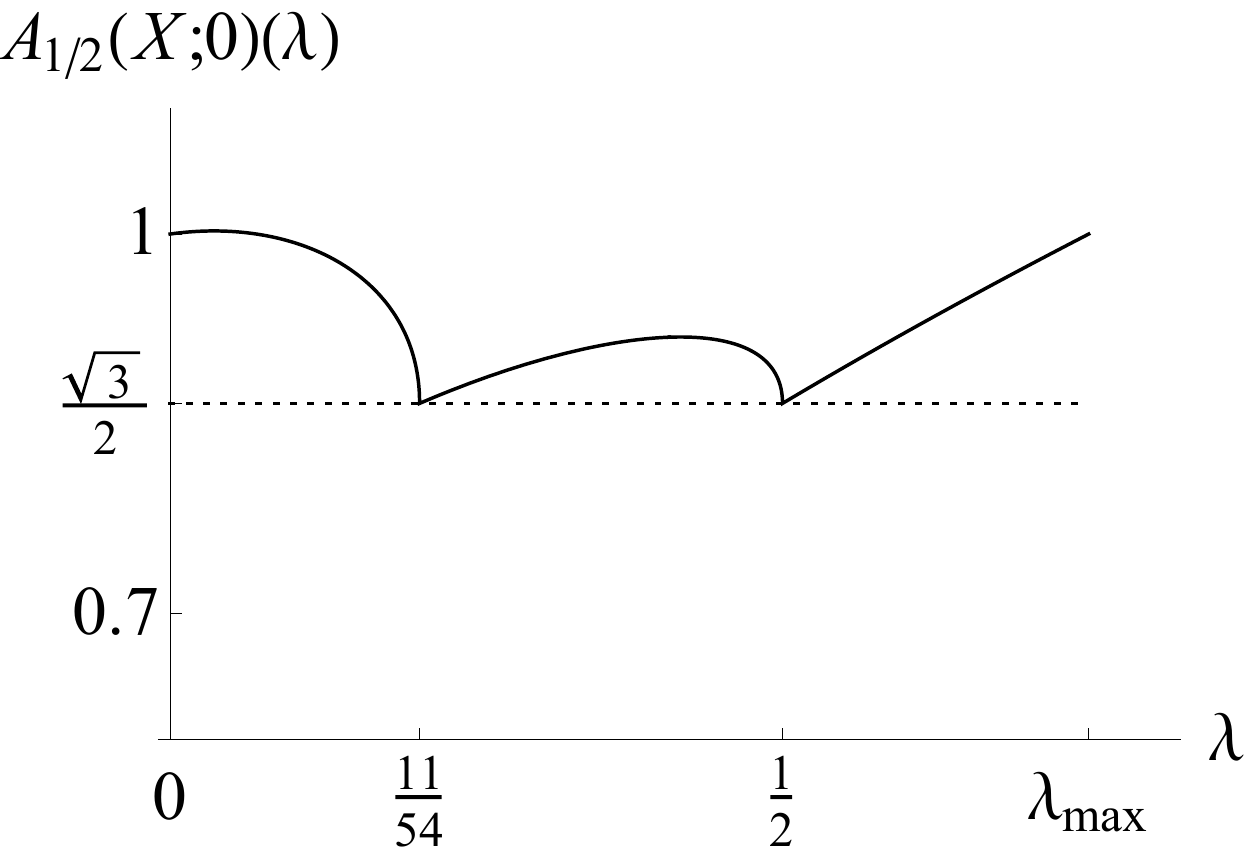} }
\end{parbox}
\begin{parbox}{.6\textwidth}
{
Let $X$ be a r.v.\ taking values $-\frac{27}{11}, -1, 2$ with probabilities $\frac14,\frac14,\frac12$; then $x_*=2$. Also let $\al=\frac12$ and $x=0$, so that $x\in(-\infty,x_*)$, and then let $\la_{\max}$ be as in \eqref{eq:lamax} with $y=x_*=2$, so that here $\la_{\max}=\frac34$. 
Then the minimum of $A_\al(X;0)(\la)$ over all real $\la\ge0$ equals $\frac{\sqrt3}2$ and is attained at each of the two points, $\la=\frac{11}{54}$ and $\la=\frac12$, and only at these two points. 
The graph $\big\{\big(\la,A_{1/2}(X;0)(\la)\big)\colon0\le\la\le\la_{\max}\big\}$ is shown here on the left.  
}
\end{parbox}
\end{ex}

Nonetheless, effective minimization of $A_\al(X;x)(\la)$ in $\la$ in \eqref{eq:P,tA,lamax} is possible even in the case $\al\in(0,1)$, say by the interval method. Indeed, take any $\al\in(0,1)$ and write 
\begin{equation*}
	A_\al(X;x)(\la)=A_\al^+(X;x)(\la)+A_\al^-(X;x)(\la), 
\end{equation*}
where (cf.\ \eqref{eq:A new}) 
\begin{equation*}
	A_\al^+(X;x)(\la):=\E\big(1+\la(X-x)/\al)_+^\al\ii{X\ge x}\quad\text{and}\quad
	A_\al^-(X;x)(\la):=\E\big(1+\la(X-x)/\al)_+^\al\ii{X<x}.  
\end{equation*}
Just as $A_\al(X;x)(\la)$ is continuous in $\la\in[0,\infty)$, so are $A_\al^+(X;x)(\la)$ and $A_\al^-(X;x)(\la)$. 
It is also clear that $A_\al^+(X;x)(\la)$ is nondecreasing and $A_\al^-(X;x)(\la)$ is nonincreasing in $\la\in[0,\infty)$. 

So, as soon as the minimizing values of $\la$ are bracketed as in \eqref{eq:P,tA,lamax}, one 
can partition the finite interval $[0,\la_{\max,\al}]$ into a large number of small subintervals $[a,b]$ with $0\le a<b\le\la_{\max,\al}$. For each such subinterval, 
\begin{align*}
	M_{a,b}:=\max_{\la\in[a,b]}A_\al(X;x)(\la)\le A_\al^+(X;x)(b)+A_\al^-(X;x)(a), \\ 
	m_{a,b}:=\min_{\la\in[a,b]}A_\al(X;x)(\la)\ge A_\al^+(X;x)(a)+A_\al^-(X;x)(b),  
\end{align*}
so that, by the continuity of $A_\al^\pm(X;x)(\la)$ in $\la$, 
\begin{equation*}
M_{a,b}-m_{a,b}\le A_\al^+(X;x)(b)-A_\al^+(X;x)(a)+A_\al^-(X;x)(a)-A_\al^-(X;x)(b)\longrightarrow0	
\end{equation*}
as $b-a\to0$, uniformly over all subintervals $[a,b]$ of the interval $[0,\la_{\max,\al}]$. 
Thus, one can effectively bracket the value $P_\al(X;x)=\inf_{\la\in[0,\la_{\max,\al}]}A_\al(X;x)(\la)$ with any degree of accuracy; this same approach will work, and perhaps may be sometimes useful, for $\al\in[1,\infty)$ as well.

\subsection{Computation of $Q_\al(X;p)$}\label{Q comput}

\begin{proposition}\label{prop:att} 
\textbf{\emph{(Quantile bounds: Attainment and bracketing).}} 
\begin{enumerate}[(i)]
	\item If $\al\in(0,\infty)$ then $\inf_{t\in T_\al}=\inf_{t\in\R}$ in \eqref{eq:Q=inf} is attained at some $t_\opt\in\R$ and hence 	
\begin{equation}\label{eq:att}
	Q_\al(X;p)=\min_{t\in\R}B_\al(X;p)(t)=B_\al(X;p)(t_\opt); 
\end{equation}
moreover, for any 
\begin{equation*}
	s\in\R\quad\text{and}\quad \tp\in(p,1), 
\end{equation*}
necessarily 
\begin{equation}\label{eq:bracket}
	t_\opt\in[t_{\min},t_{\max}],
\end{equation}
where 
\begin{equation}\label{eq:t_max}
t_{\max}:=B_\al(X;p)(s),	\quad
t_{\min}:=t_{0,\min}\wedge t_{1,\min},  
\end{equation}
\begin{equation}\label{eq:t_0,t_1}
	t_{0,\min}:=Q_0(X;\tp), \quad 
	t_{1,\min}:=\frac{(\tp/p)^{1/\al}\,t_{0,\min}-t_{\max}}{(\tp/p)^{1/\al}-1}. 
\end{equation}
	\item Suppose now that $\al=\infty$. Then $\inf_{t\in T_\al}=\inf_{t\in(0,\infty)}$ in \eqref{eq:Q=inf} is attained and hence 
\begin{equation*}
	Q_\infty(X;p)=\min_{t\in(0,\infty)}B_\infty(X;p)(t)  
\end{equation*}	
unless
\begin{equation}\label{eq:x_*,p_*conds}
	x_*<\infty \quad\text{and}\quad p\le p_*,     
\end{equation}
where $x_*$ and $p_*$ are as in \eqref{eq:x_*,p_*}. 
On the other hand, if conditions \eqref{eq:x_*,p_*conds} hold then $B_\infty(X;p)(t)$ is  strictly increasing in $t>0$ and hence  
$\inf_{t\in T_\al}=\inf_{t\in(0,\infty)}$ in \eqref{eq:Q=inf} is not attained; rather, 
\begin{equation*}
	Q_\infty(X;p)=\inf_{t>0}B_\infty(X;p)(t)=
	B_\infty(X;p)(0+)=x_*.  
\end{equation*}
\end{enumerate}
\end{proposition}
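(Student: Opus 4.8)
The plan is to handle both parts through the common principle that a continuous, coercive function attains its minimum, and then to locate the minimizer (part (i)) or diagnose the exceptional non-attainment (part (ii)) by elementary estimates together with Proposition~\ref{prop:inverse}. For the attainment in part (i), I would first note that $t\mapsto B_\al(X;p)(t)$ is continuous on $\R$: since $X\in\XX_\al$ gives $\E X_+^\al<\infty$, a dominated-convergence argument with an integrable local majorant (of the type $(X-t)_+^\al\le2^{(\al-1)_+}(X_+^\al+|t|^\al)$ for $t$ in a bounded interval) shows $t\mapsto\E(X-t)_+^\al$ is continuous, hence so is $B_\al(X;p)$ by \eqref{eq:B}. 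Coercivity is checked at the two ends: as $t\to+\infty$ one has $B_\al(X;p)(t)\ge t\to\infty$; as $t\to-\infty$, picking $c$ with $\P(X\ge c)>p$ (possible since $\P(X\ge c)\to1$ and $p<1$) and using $\|(X-t)_+\|_\al\ge(c-t)\P(X\ge c)^{1/\al}$ gives $B_\al(X;p)(t)\ge t+(c-t)(\P(X\ge c)/p)^{1/\al}\to+\infty$, because the coefficient exceeds $1$. Continuity plus coercivity produce a minimizer $t_\opt\in\R$, which is \eqref{eq:att}.

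For the bracketing \eqref{eq:bracket}, the upper bound is immediate: since $B_\al(X;p)(t)\ge t$, one has $t_\opt\le B_\al(X;p)(t_\opt)=Q_\al(X;p)\le B_\al(X;p)(s)=t_{\max}$. For the lower bound set $\rho:=(\tp/p)^{1/\al}>1$ and $c:=t_{0,\min}=Q_0(X;\tp)$. By the left-continuity of $x\mapsto\P(X\ge x)$ and \eqref{eq:Q_0} one has $\P(X\ge c)\ge\tp$, so for every $t\le c$ the estimate $\|(X-t)_+\|_\al\ge(c-t)\P(X\ge c)^{1/\al}\ge(c-t)\tp^{1/\al}$ yields the affine minorant $B_\al(X;p)(t)\ge\rho c-(\rho-1)t$. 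If $t_\opt\le c$, optimality together with $Q_\al(X;p)\le t_{\max}$ gives $\rho c-(\rho-1)t_\opt\le t_{\max}$, that is, $t_\opt\ge(\rho c-t_{\max})/(\rho-1)=t_{1,\min}$; if instead $t_\opt>c=t_{0,\min}$, there is nothing to prove. Either way $t_\opt\ge t_{0,\min}\wedge t_{1,\min}=t_{\min}$.

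For part (ii) I would reparametrize by $\la=1/t$ and write $B_\infty(X;p)(t)=(\psi(\la)-\ln p)/\la$ with $\psi(\la):=\ln\E e^{\la X}$. The endpoint limits are $B_\infty(X;p)(t)\to+\infty$ as $t\to\infty$ (because $\E e^{X/t}\to1$ and $\ln\tfrac1p>0$) and $B_\infty(X;p)(0+)=x_*$: when $x_*<\infty$ this follows from $\E e^{X/t}\le e^{x_*/t}$ and the matching lower bound $\E e^{X/t}\ge e^{(x_*-\vp)/t}\P(X>x_*-\vp)$, and when $x_*=\infty$ a bound $\E e^{X/t}\ge e^{N/t}\P(X>N)$ gives $B_\infty(X;p)(0+)=\infty=x_*$. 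If \eqref{eq:x_*,p_*conds} fails, then either $x_*=\infty$, so $B_\infty$ is coercive at both ends and attains its minimum by continuity; or $x_*<\infty$ and $p>p_*$, in which case part~\eqref{it:p>p_*} of Proposition~\ref{prop:inverse} gives $Q_\infty(X;p)<x_*=B_\infty(X;p)(0+)$, so the infimum lies strictly below both endpoint values and is therefore attained on a compact subinterval of $(0,\infty)$.

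Finally, under \eqref{eq:x_*,p_*conds} I would establish strict monotonicity of $t\mapsto B_\infty(X;p)(t)$, equivalently strict decrease of $\la\mapsto(\psi(\la)-\ln p)/\la$ on $(0,\infty)$; since $x_*<\infty$ forces $\La_X=(0,\infty)$ with $\psi$ smooth and convex there, the derivative has the sign of $g(\la)-\ln\tfrac1p$, where $g(\la):=\la\psi'(\la)-\psi(\la)$. As $g(0)=0$ and $g'(\la)=\la\psi''(\la)\ge0$, $g$ is nondecreasing, and the tilting asymptotics $\psi(\la)-\la x_*\to\ln p_*$ and $\psi'(\la)\to x_*$ give $g(\la)\uparrow\ln\tfrac1{p_*}\le\ln\tfrac1p$; in the nondegenerate case $p_*<1$ one has $\psi''>0$, so $g(\la)<\ln\tfrac1{p_*}\le\ln\tfrac1p$ strictly, while if $p_*=1$ then $X\equiv x_*$ and $B_\infty(X;p)(t)=x_*-t\ln p$ is visibly strictly increasing. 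Hence the infimum is not attained and equals $B_\infty(X;p)(0+)=x_*$, consistent with part~\eqref{it:p le p_*} of Proposition~\ref{prop:inverse}. I expect this last step to be the main obstacle, since it requires controlling the cumulant generating function $\psi$ globally on $(0,\infty)$ and separating the degenerate case, whereas everything else reduces to continuity, coercivity, the affine-minorant trick, and the cited quantile-value facts.
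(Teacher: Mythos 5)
Your proof is correct. Part (i) is essentially the paper's own argument in a slightly different packaging: the paper shows directly that $B_\al(X;p)(t)>\inf_{t\in\R}B_\al(X;p)(t)$ whenever $t>t_{\max}$ (via $B_\al(X;p)(t)\ge t$) or $t<t_{\min}$ (via the same affine minorant $B_\al(X;p)(t)\ge t+(t_{0,\min}-t)(\tp/p)^{1/\al}$ that you use), so that attainment and bracketing come out of one compactness step, whereas you first prove attainment by coercivity and then extract the bracket from optimality of $t_\opt$; the key inequalities are identical. Part (ii) is where you genuinely diverge. The paper argues by cases on $x_*$ and $p_*$ with elementary estimates: in the attained subcase with $x_*<\infty$ and $p>p_*$ it shows self-containedly that $B_\infty(X;p)(s)<B_\infty(X;p)(0+)=x_*$ for small $s$ (because $\ln\frac{\E e^{X/s}}p\to\ln\frac{p_*}p<0$), where you instead import $Q_\infty(X;p)<x_*$ from part~\eqref{it:p>p_*} of Proposition~\ref{prop:inverse} together with Theorem~\ref{prop:Q=} -- a legitimate shortcut, since neither of those results depends on Proposition~\ref{prop:att}. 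For the strict monotonicity under \eqref{eq:x_*,p_*conds}, the paper normalizes $x_*=0$ and observes that $t$ and $\ln\frac{\E e^{X/t}}p$ are then both positive and both strictly decreasing as $t$ decreases (the latter decreasing to $\ln\frac{p_*}p\ge0$), so their product is strictly increasing in $t$; your route through $g(\la)=\la\psi'(\la)-\psi(\la)$ with $g\uparrow\ln\frac1{p_*}\le\ln\frac1p$ is heavier machinery but reaches the same conclusion, and the limit of $g$ can in fact be bypassed by the one-line bound $g(\la)\le\la x_*-\psi(\la)=-\ln\E e^{\la(X-x_*)}\le\ln\frac1{p_*}$, with strictness when $\P(X<x_*)>0$. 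The paper's product argument is the more economical of the two here.

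Two small points you should tighten. First, in the $x_*=\infty$ case, $B_\infty(X;p)(t)$ may equal $+\infty$ for $t$ below $1/\sup\La_X$ when $\La_X$ is bounded, so ``attains its minimum by continuity'' should be ``by lower semicontinuity'' (Fatou), which is exactly how the paper phrases its Case~1. Second, your assertion $g(\la)\uparrow\ln\frac1{p_*}$ requires $\la\big(\psi'(\la)-x_*\big)\to0$, which does hold by dominated convergence (using $u e^{-u}\le e^{-1}$) but deserves a line; neither point affects the validity of the argument.
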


For instance, 
in the case when $\al=0.5$, $p=0.05$, and $X$ has the Gamma distribution with the shape and scale parameters equal to $2.5$ and $1$, respectively, Proposition~\ref{prop:att} yields  $t_{\min}>4.01$ (using $\tp=0.095$) and $t_{\max}<6.45$. 

When $\al=0$, the quantile bound $Q_\al(X;p)$ is simply the quantile $Q(X;p)$, which can be effectively computed by formula \eqref{eq:Q_0}, since the tail probability $\P(X>x)$ is monotone in $x$. 
Next, as was noted in the proof of Theorem~\ref{th:coher}, $B_\al(X;p)(t)$ is convex in $t$ when $\al\in[1,\infty]$, which provides for an effective computation of $Q_\al(X;p)$ by formula \eqref{eq:Q=inf}. 

Therefore, it remains to consider the computation -- again by formula \eqref{eq:Q=inf} -- of $Q_\al(X;p)$ 
for $\al\in(0,1)$. In such a case, as in Subsection~\ref{P comput}, one can use an interval method. 
As soon as the minimizing values of $t$ are bracketed as in \eqref{eq:bracket}, one 
can partition the finite interval $[t_{\min},t_{\max}]$ into a large number of small subintervals $[a,b]$ with $t_{\min}\le a<b\le t_{\max}$. For each such subinterval, 
\begin{align*}
	M_{a,b}:=\max_{t\in[a,b]}B_\al(X;p)(t)\le b+p^{-1/\al}\,\|(X-a)_+\|_\al, \\ 
	m_{a,b}:=\min_{t\in[a,b]}B_\al(X;p)(t)\ge a+p^{-1/\al}\,\|(X-b)_+\|_\al,  
\end{align*}
so that, by the continuity of $\|(X-t)_+\|_\al$ in $t$, 
\begin{equation*}
M_{a,b}-m_{a,b}\le b-a+p^{-1/\al}\,(\|(X-a)_+\|_\al-\|(X-b)_+\|_\al)\longrightarrow0	
\end{equation*}
as $b-a\to0$, uniformly over all subintervals $[a,b]$ of the interval $[t_{\min},t_{\max}]$. 
Thus, one can effectively bracket the value $Q_\al(X;p)=\inf_{t\in\R}B_\al(X;p)(t)$; this same approach will work, and perhaps may be useful, for $\al\in[1,\infty)$ as well.

%
%

\begin{center}***\end{center}

If $\al\in(1,\infty)$ then, by part~\eqref{B str conv} 
of Proposition~\ref{prop:B str conv} and part~(i) of Proposition~\ref{prop:att}, 
the set $\amin$ is a singleton one; that is, there is exactly one minimizer $t\in\R$ of  
$B_\al(X;p)(t)$.
If $\al=1$ then $B_\al(X;p)(t)=B_1(X;p)(t)$ is convex, but not strictly convex, in $t$, and the set $\amin$ of all minimizers of $B_\al(X;p)(t)$ in $t$ coincides with the set of all $(1-p)$-quantiles of $X$, as mentioned at the conclusion of the derivation of the identity \eqref{eq:al=1}. Thus, if $\al=1$, then the set $\amin$ 
may in general 
be, depending on $p$ and the distribution of $X$, a nonzero-length closed interval. 
Finally, if $\al\in(0,1)$ then, in general, the set $\amin$ 
does not have to be connected:  

\begin{proposition}\label{prop:two min,Q} 
For any $\al\in(0,1)$, $p\in(0,1)$, and $x\in\R$, there is a r.v.\ $X$ (taking three distinct values) such that $Q_\al(X;p)=x$ and the infimum $\inf_{t\in T_\al}=\inf_{t\in\R}$ in \eqref{eq:Q=inf} is attained at precisely two distinct values of $t$.  
\end{proposition}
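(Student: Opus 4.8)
The plan is to \emph{reduce} this to its tail-probability companion, Proposition~\ref{prop:two min,P}, via the exact pointwise duality between $\tA_\al(X;x)$ and $B_\al(X;p)$ already used in the proof of Theorem~\ref{prop:Q=}. Given $\al\in(0,1)$, $p\in(0,1)$, and $x\in\R$, I would apply Proposition~\ref{prop:two min,P} with these \emph{same} parameters to obtain a three-valued r.v.\ $X$ with $P_\al(X;x)=p$ for which the infimum $\inf_{\la\in(0,\infty)}A_\al(X;x)(\la)$ in \eqref{eq:P new} is attained at exactly two points $\la_1\ne\la_2$. My claim is that this very same $X$ witnesses Proposition~\ref{prop:two min,Q}.

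First I would translate the two $\la$-minimizers into two $t$-minimizers of $\tA_\al(X;x)$. By the identity $A_\al(X;x)(\la)=\tA_\al(X;x)(x-\al/\la)$ recorded below \eqref{eq:A old}, the increasing bijection $\la\mapsto t=x-\al/\la$ maps $(0,\infty)$ onto $(-\infty,x)$; hence $A_\al$ and $\tA_\al$ share the minimum value $P_\al(X;x)=p$, and the two minimizing $\la$'s correspond to exactly two minimizers $t_1,t_2\in(-\infty,x)$ of $\tA_\al(X;x)$. Since $(x-t)_+^\al=0$ for $t\ge x$, the convention $\frac a0:=\infty$ gives $\tA_\al(X;x)(t)=\infty>p$ there, so $t_1,t_2$ are in fact \emph{all} minimizers of $\tA_\al(X;x)$ over $\R$, and $\tA_\al(X;x)(t)>p$ for every $t\notin\{t_1,t_2\}$. (The degenerate cases $x\ge x_*$ are excluded automatically: at $x=x_*$ the map $\la\mapsto A_\al(X;x_*)(\la)$ is nonincreasing with infimum $p_*$ not attained at any finite $\la$, and at $x>x_*$ one has $P_\al=0$, so neither is compatible with two interior minimizers. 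Thus $x<x_*$.)

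Next I would read off both conclusions from the pointwise duality. For $t<x$ one has $(x-t)_+=x-t>0$, so directly from \eqref{eq:A old} and \eqref{eq:B},
\[
B_\al(X;p)(t)=x\iff \E(X-t)_+^\al=p\,(x-t)^\al\iff \tA_\al(X;x)(t)=p,
\]
and likewise $B_\al(X;p)(t)<x\iff\tA_\al(X;x)(t)<p$, which is the level-set duality \eqref{eq:T=T}. As $\tA_\al(X;x)\ge p$ everywhere, the strict version shows $B_\al(X;p)(t)\ge x$ for all $t<x$; and for $t\ge x$ one has $B_\al(X;p)(t)\ge t\ge x$, with strict inequality at $t=x$ because $x<x_*$ forces $\|(X-x)_+\|_\al>0$. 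Hence $\inf_{t}B_\al(X;p)(t)=x$, so $Q_\al(X;p)=x$ by Theorem~\ref{prop:Q=}, and the infimum is attained precisely on $\{t<x:\tA_\al(X;x)(t)=p\}=\{t_1,t_2\}$, i.e.\ at exactly two distinct values of $t$.

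The conceptual work is all done by the duality already in hand, so the main obstacle is bookkeeping rather than a new idea: I must check that the $\la\leftrightarrow t$ correspondence is genuinely two-to-two (no collapse, no minimizer hiding at $t\ge x$), and that the single borderline point $t=x$ contributes no spurious minimizer — both of which hinge on confirming $x<x_*$ for the $X$ produced by Proposition~\ref{prop:two min,P}. Should a self-contained argument be preferred, the fallback is to write down the three-point $X$ explicitly (as in Proposition~\ref{prop:two min,P}) and verify by direct computation that $t\mapsto B_\al(X;p)(t)$ has two equal global minima; but invoking Proposition~\ref{prop:two min,P} through the duality avoids repeating that computation.
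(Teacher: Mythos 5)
Your proposal is correct and is essentially the paper's own argument: the paper likewise deduces Proposition~\ref{prop:two min,Q} from Proposition~\ref{prop:two min,P} via the duality \eqref{eq:T=T} together with the change of variables $t=x-\al/\la$, noting that $\la$ minimizes $A_\al(X;x)(\cdot)$ iff $t=x-\al/\la$ minimizes $B_\al(X;p)(\cdot)$. Your extra bookkeeping (the equality-level version of \eqref{eq:T=T}, the check that $x<x_*$, and the exclusion of minimizers at $t\ge x$) just makes explicit what the paper leaves as ``follows immediately.''
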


Proposition~\ref{prop:two min,Q} follows immediately from Proposition~\ref{prop:two min,P}, by the duality \eqref{eq:T=T} and the change-of-variables identity  $A_\al(X;x)(\la)=\tA_\al(X;x)(x-\al/\la)$ for $\al\in(0,\infty)$, used to establish \eqref{eq:P old}--\eqref{eq:A old}. 
At that, $\la\in(0,\infty)$ is one of the two minimizers of $A_\al(X;x)(\la)$ in Proposition~\ref{prop:two min,P} if and only if $t:=x-\al/\la$ is one of the two minimizers of $B_\al(X;p)(t)$ in Proposition~\ref{prop:two min,Q}. 

Proposition~\ref{prop:two min,P} is illustrated by the following example, which is obtained from Example~\ref{ex:two min,P} by the same duality \eqref{eq:T=T}. 

\begin{ex}\label{ex:two min,Q}\  

\vspace*{4pt} 
\noindent
\begin{parbox}{.38\textwidth}
	{ \includegraphics[width=.36\textwidth]{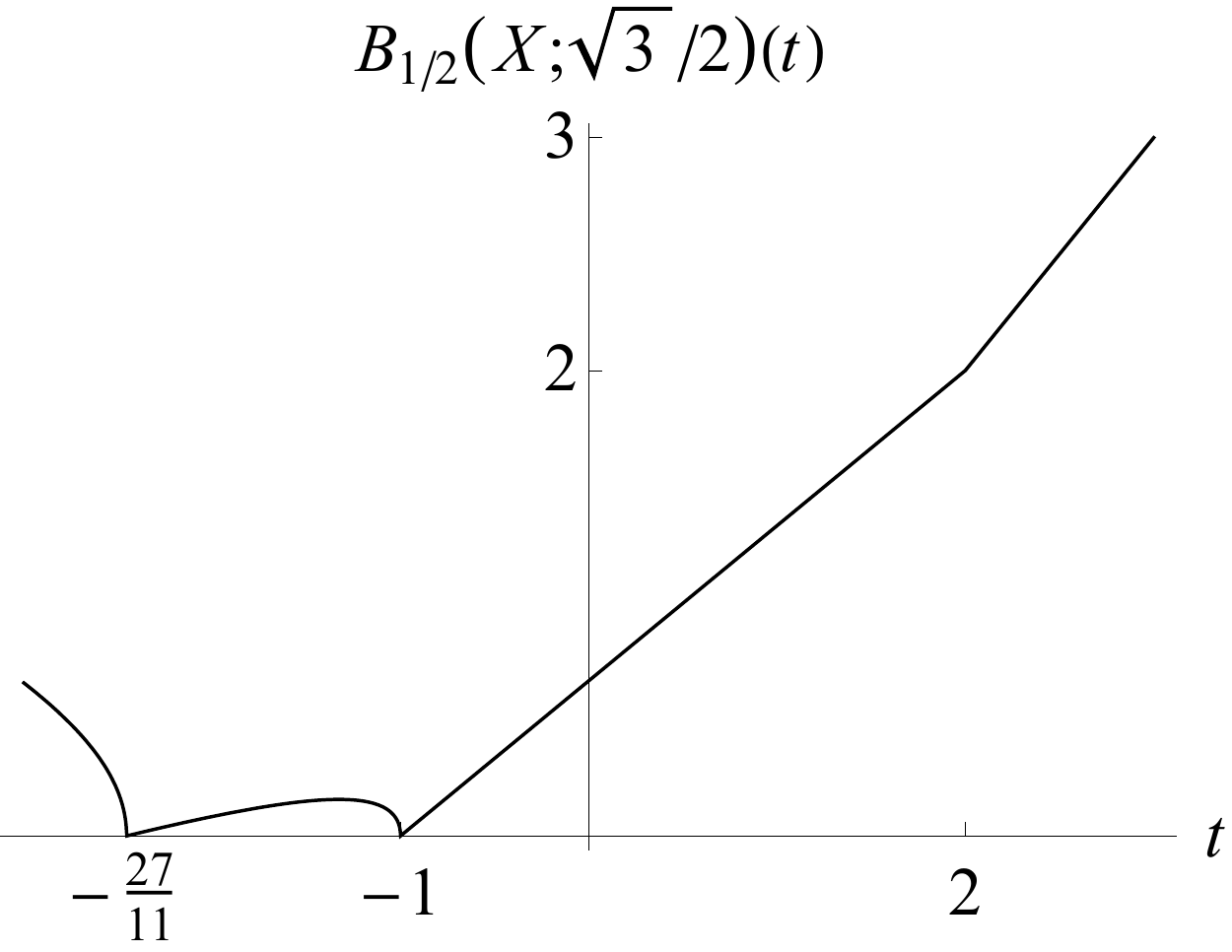} }
\end{parbox}
\begin{parbox}{.6\textwidth}
{
As in Example~\ref{ex:two min,P}, let $\al=\frac12$ and 
let $X$ be a r.v.\ 
taking values $-\frac{27}{11}, -1, 2$ with probabilities $\frac14,\frac14,\frac12$. Also let $p=\frac{\sqrt3}2$. 
Then the minimum of $B_\al(X;p)(t)$ over all real $t$ equals $0$ and is attained at each of the two points, $t=-\frac{27}{11}$ and $t=-1$, and only at these two points. 
The graph $\big\{\big(t,B_{1/2}\big(X;\frac{\sqrt3}2\big)(t)\big)\colon\break
-3\le t\le3\big\}$ is shown here on the left. 
The minimizing values of $t$ here, $-\frac{27}{11}$ and $-1$, are related with the minimizing values of $\la$ in Example~\ref{ex:two min,P}, $\frac{11}{54}$ and $\frac12$, by the mentioned formula $t=x-\al/\la$ (here with $x=0$ and $\al=\frac12$).  
}
\end{parbox}
\end{ex}

\begin{center}***\end{center}

In the case $\al=1$, an expression of $Q_\al(X;p)$ can be given 
in terms of the true $(1-p)$-quantile $Q(X;p)$: 
\begin{equation}\label{eq:al=1}
	Q_1(X;p)=Q(X;p)+\tfrac1p\,\E\big(X-Q(X;p)\big)_+. 
\end{equation}
That the expression for $Q_1(X;p)$ in \eqref{eq:Q=inf} coincides with the one in \eqref{eq:al=1} was proved in \cite[Theorem~1]{rocka-ur00} for absolutely continous r.v.'s $X$ and in \cite[page~273]{pflug00} and \cite[Theorem~10]{rocka-ur02} in general. 
For the readers' convenience, let us present here the following brief proof of \eqref{eq:al=1}. For all real $h>0$ and $t\in\R$ one has 
\begin{equation*}
	(X-t)_+ - (X-t-h)_+ = h \ii{X>t}-(t+h-X)\ii{t<X<t+h}.  
\end{equation*}
It follows that the right derivative of the convex function $t\mapsto t+\|(X-t)_+\|_1/p$ at any point $t\in\R$ is $1-\P(X>t)/p$, which, by \eqref{eq:Q_0}, is $\le0$ if $t<Q(X;p)$ and $>0$ if $t>Q(X;p)$. 
Hence, $Q(X;p)$ is a minimizer in  $t\in\R$ of $t+\|(X-t)_+\|_1/p$, and thus \eqref{eq:al=1} follows by \eqref{eq:Q=inf}. 
It is also seen now that any $(1-p)$-quantile of $X$ is a minimizer in  $t\in\R$ of $t+\|(X-t)_+\|_1/p$ as well, and $Q(X;p)$ is the largest of these minimizers. 

%

As was shown in \cite{rocka-ur02}, the expression for $Q_1(X;p)$ in \eqref{eq:al=1} can be rewritten as a conditional expectation: 
\begin{equation}\label{eq:Q_1=}
	Q_1(X;p)=Q(X;p)+\E\big(X-Q(X;p)\big|X\ge Q(X;p),\,U\ge\de\big)
	=\E\big(X|X\ge Q(X;p),\,U\ge\de\big), 
\end{equation}
where $U$ is any r.v.\ which independent of $X$ and uniformly distributed on the interval $[0,1]$, $\de:=\de(X;p):=d\ii{X=Q(X;p)}$, and $d$ is any real number in the interval $[0,1]$ such that 
$$\P\!\big(X\ge Q(X;p)\big)-p=\P\big(X=Q(X;p)\big)\,d;$$ 
such a number $d$ always exists. 
Thus, the r.v.\ $U$ is used to split the possible atom of the distribution of $X$ at the quantile point $Q(X;p)$ in order to make the randomized tail probability 
$\P\!\big(X\ge Q(X;p),\,U\ge\de\big)$ exactly equal to $p$. 
Of course, in the absence of such an atom, one can simply write
\begin{equation}\label{eq:Q_1,simple}
Q_1(X;p)=Q(X;p)+\E\big(X-Q(X;p)\big|X\ge Q(X;p)\big)=\E\big(X|X\ge Q(X;p)\big). 	
\end{equation}

However, as pointed out in \cite{rocka-ur00,rocka-ur02}, a variational formula such as \eqref{eq:Q=inf} has a distinct advantage over such ostensibly explicit formulas as \eqref{eq:al=1} and \eqref{eq:Q_1=}, since \eqref{eq:Q=inf} allows for incorporation into  specialized optimization problems, with additional restrictions, say on the distribution of $X$; cf.\ e.g.\ \cite[Theorem~2]{rocka-ur00}. 


Nonetheless, let us obtain an extension of the representation \eqref{eq:al=1}, valid for all $\al\in[1,\infty)$. 
In accordance with \cite[Proposition~3.2]{pin-hoeff}, consider 
\begin{equation}\label{eq:x_**}
	x_{**}:=x_{**,X}:=\sup\big((\supp X)\setminus\{x_*\}\big)
	\in[-\infty,x_*]\subseteq[-\infty,\infty].  
\end{equation}

The following proposition will be useful.  

\begin{proposition}\label{prop:B str conv}\ 
\begin{enumerate}[(i)]
	\item\label{B conv} If $\al\in[1,\infty]$ then $B_\al(X;p)(t)$ is convex in $t\in T_\al$.
	\item\label{B str conv} If $\al\in(1,\infty)$ then $B_\al(X;p)(t)$ is strictly convex in $t\in(-\infty,x_{**}]\cap\R
	$.
	\item\label{B str conv,infty} $B_\infty(X;p)(t)$ is strictly convex in 
	$t\in
	\{s\in(0,\infty)\colon \E e^{X/s}<\infty\}$ unless $\P(X=c)=1$ for some $c\in\R$.
\end{enumerate} 
\end{proposition}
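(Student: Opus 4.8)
The plan is to handle the two regimes $\al\in[1,\infty)$ and $\al=\infty$ separately, reading off the structural form of $B_\al(X;p)(t)$ from \eqref{eq:B} and discarding the affine-in-$t$ summand ($t$ in the first case, $-t\ln p$ in the second), which affects neither convexity nor strict convexity.

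For part~\eqref{B conv} with $\al\in[1,\infty)$ I would reduce to showing that $t\mapsto\|(X-t)_+\|_\al$ is convex, and prove this from two elementary facts: for each fixed realization the map $s\mapsto(X-s)_+$ is convex (a positive part of an affine function), and the $\LL^\al$-norm for $\al\ge1$ is both monotone on nonnegative functions and subadditive. These combine into the chain
\begin{equation*}
\|(X-t_\la)_+\|_\al\le\|\la(X-t_1)_++(1-\la)(X-t_2)_+\|_\al\le\la\|(X-t_1)_+\|_\al+(1-\la)\|(X-t_2)_+\|_\al,
\end{equation*}
where $t_\la:=\la t_1+(1-\la)t_2$ and $\la\in[0,1]$: the first inequality uses pointwise convexity together with monotonicity of the norm, the second is Minkowski's inequality. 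For the case $\al=\infty$ I would recognize $t\ln\E e^{X/t}=tF(1/t)$, where $F(s):=\ln\E e^{sX}$ is the cumulant generating function, convex by H\"older; since the perspective $(s,t)\mapsto tF(s/t)$ is jointly convex for $t>0$, its restriction to the line $s=1$ is convex in $t$.

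The main obstacle is the strictness in part~\eqref{B str conv}. It cannot be extracted from part~\eqref{B conv}, because the $\al$-th root in $\|\cdot\|_\al=(\E(\cdot)^\al)^{1/\al}$ is concave and flattens strict convexity: once $t$ exceeds $x_{**}$ the only mass of $X$ above $t$ sits at the single top value $x_*$, so $\|(X-t)_+\|_\al=p_*^{1/\al}(x_*-t)$ is affine there -- this is precisely why $x_{**}$ from \eqref{eq:x_**} is the cutoff, and is the crux. To obtain strictness on $(-\infty,x_{**}]\cap\R$ I would instead track the equality cases in the chain above for any $t_1<t_2\le x_{**}$ (it suffices to contradict equality at the midpoint). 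Since $\LL^\al$ is strictly convex for $\al\in(1,\infty)$, equality in the Minkowski step forces $(X-t_1)_+$ and $(X-t_2)_+$ to be proportional almost surely; solving $(X-t_1)_+=c(X-t_2)_+$ shows this can happen only if $X$ has no mass in $(t_1,x_*)$ and all of its mass above $t_1$ concentrates at $x_*$, whence every support point other than $x_*$ lies in $(-\infty,t_1]$ and so $x_{**}\le t_1<t_2\le x_{**}$, a contradiction. The single degenerate branch, in which $(X-t_2)_+=0$ almost surely (forcing $t_2=x_{**}=x_*$, so that $x_*$ is non-isolated in the support), I would dispatch through the monotonicity step: there $\P(t_1<X<x_*)>0$, and strict monotonicity of the $\LL^\al$-norm makes the first inequality strict.

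For part~\eqref{B str conv,infty} I would work again with $H(t):=t\ln\E e^{X/t}=tF(1/t)$ and compute, using smoothness of $F$ on the interior of its (interval) domain, that $H''(t)=t^{-3}F''(1/t)$. As $F''(s)$ equals the variance of $X$ under the exponentially tilted measure with parameter $s$, it is strictly positive unless $X$ is almost surely constant; hence $H''>0$ on the relevant interval, giving strict convexity, which convexity and continuity extend to the whole set $\{s\in(0,\infty)\colon\E e^{X/s}<\infty\}$. In the excluded case $\P(X=c)=1$ one has $F$ affine and $H\equiv c$, confirming the ``unless'' clause. The only technical care needed here is justifying differentiation under the expectation for $F$ in the interior of its domain, which is standard for cumulant generating functions.
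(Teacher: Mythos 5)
Your proof is correct, and its skeleton for parts~(i) and (ii) is the same as the paper's: discard the affine summand, get convexity from Minkowski plus monotonicity of $\|\cdot\|_\al$ on nonnegative r.v.'s, and get strictness for $\al\in(1,\infty)$ from the strict triangle inequality in $\LL^\al$, with $x_{**}$ as the exact threshold below which $(X-t_1)_+$ and $(X-t_2)_+$ cannot be proportional. The differences are in execution and they are worth noting. For part~(ii) the paper simply cites strict subadditivity of the norm for non-proportional arguments and extends from $(-\infty,x_{**})$ to $(-\infty,x_{**}]$ ``by continuity'' (i.e., a convex function that touches a chord at an interior point is affine there); you instead unwind the equality cases explicitly, including the degenerate branch $(X-t_2)_+=0$ a.s.\ at $t_2=x_{**}=x_*$, which you correctly resolve through strictness of the monotonicity step using $\P(t_1<X<x_*)>0$ --- a more self-contained treatment of the endpoint. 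For the $\al=\infty$ convexity the paper proves the stronger joint convexity of $B_\infty(X;p)(t)$ in the pair $(X,t)$ by recasting the defining inequality as an instance of H\"older's inequality (it needs that joint version elsewhere, for subadditivity in $X$); your perspective-function argument $t\mapsto tF(1/t)$ is a cleaner off-the-shelf route when only convexity in $t$ is required. Part~(iii) is where you genuinely diverge: the paper obtains strictness by invoking the equality condition in that same H\"older inequality (equality forces $e^{X(1/t_i-1/t_j)}$ to be a.s.\ constant, hence $X$ degenerate), whereas you compute $H''(t)=t^{-3}F''(1/t)$ and identify $F''$ with the variance of $X$ under the exponentially tilted measure. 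Both are valid; yours buys an explicit quantitative curvature formula at the cost of justifying differentiation under the expectation and a separate continuity argument at a possible finite endpoint of $\{s>0\colon\E e^{X/s}<\infty\}$, both of which you flag and which are indeed routine.
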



Suppose at this point that $\al\in[1,\infty)$. By part~(i) of Proposition~\ref{prop:att} (stated in Section~\ref{comput}), the minimum-attainment set 
\begin{equation}\label{eq:amin}
	\amin:=\{t\in\R\colon B_\al(X;p)(t)=Q_\al(X;p)\} 
\end{equation}
is nonempty and bounded. Also, this set is closed, by the continuity of $ B_\al(X;p)(t)$ in $t\in\R$. Therefore, the definition 
\begin{equation}\label{eq:ql}
	\ql:=\max\amin 
\end{equation}
makes sense, and 
\begin{equation}\label{eq:ql in R}
\ql\in\R; 	
\end{equation}
thus, $\ql$ is the largest value of $t$ minimizing $B_\al(X;p)(t)$. 
Moreover, in the case when $\al=1$ this largest minimizer is  
\begin{equation}\label{eq:qlz=Q0}
	\qlz=Q_0(X;p),  
\end{equation}
the largest $(1-p)$-quantile of $X$, 
as stated at the end of the paragraph containing \eqref{eq:al=1} and its proof. 
Thus, indeed the representation \eqref{eq:al=1} is extended to all $\al\in[1,\infty)$: 
\begin{equation}\label{eq:al>1}
	Q_\al(X;p)=\ql+p^{-1/\al}\,\big\|\big(X-\ql\big)_+\big\|_\al. 
\end{equation}
Further properties of $\ql$ are presented in 


\begin{proposition}\label{prop:ql} 
Suppose that $\al\in[1,\infty)$. 
Then the following statements are true. 
\begin{enumerate}[(i)]	
	\item\label{ql comp} Computation of $\ql$:
\begin{enumerate}[(a)]
	\item \label{ql,p<p*} If 
	$p\in(0,p_*]\cap(0,1)$ then $\ql=x_*=Q_0(X;p)
	$, 
	where $x_*$ is as in \eqref{eq:x_*,p_*}. 
	\item \label{ql,p>p*} Suppose here that $\al\in(1,\infty)$ and $p\in(p_*,1)$. 
	Then $\ql=t_{\al,p}$, where $t_{\al,p}$ is the only root $t\in(-\infty,x_*)$ of the equation $\pl\al=p$ and, again for $t\in(-\infty,x_*)$,  
\begin{equation}\label{eq:pl}
\kern-1cm
	\pl\al:=
	\Big(\frac{\|(X-t)_+\|_{\al-1}}{\|(X-t)_+\|_\al\ \ \;}\Big)^{(\al-1)\al}
=	\frac{\E^\al(X-t)_+^{\al-1}}{\E^{\al-1}(X-t)_+^\al}
=\P(X>t)\,
\frac{\E^\al\big((X-t)^{\al-1}|X>t\big)}{\E^{\al-1}\big((X-t)^\al|X>t\big)}.  
\end{equation}
Moreover, 
\begin{equation}\label{eq:ql<x**}
	\ql=t_{\al,p}\in(-\infty,x_{**}). 
\end{equation}
Furthermore, 
for all $t\in(-\infty,x_*)$ one has $\pl\al\le\P(X>t)\le P_0(X;t)$, and also 
$\pl\al\to\P(X>t)$ as $\al\downarrow1$ \big(assuming that $X\in\XX_\al$ for some $\al\in(1,\infty)$\big);
in addition, $\pl\al<\P(X>t)$ for all $t\in(-\infty,x_{**})$.  
\end{enumerate}
	\item \label{ql decr in p} $\ql$ is nonincreasing in $p\in(0,1)$ and $\ql=x_*=\mbox{\emph{const}}$ for $p\in(0,p_*]\cap(0,1)$. 
Moreover, if $\al\in(1,\infty)$ then $\ql$ is strictly decreasing in $p\in[p_*,1)$ and  continuous in $p\in(p_*,1)$; at that, $\ql\underset{p\downarrow p_*}\longrightarrow x_{**}$ and $\ql\underset{p\uparrow1}\longrightarrow-\infty$. 
	\item \label{ql decr in al} $\ql$ is nonincreasing in $\al\in[1,\infty)$ in the sense that $\qlbe\le\ql$ if $1\le\al<\be<\infty$ and $X\in\XX_\be$; in particular, 	
\begin{equation}\label{eq:ql<Q0}
	\ql\le\qlz=Q_0(X;p)=Q(X;p),  
\end{equation}
if $X\in\XX_\al$. 
More specifically, if $p\in(0,p_*]\cap(0,1)$, $1\le\al<\infty$, and $X\in\XX_\al$, then $\ql=x_*$; 
if $p\in(p_*,1)$, $1\le\al<\be<\infty$, and $X\in\XX_\be$, then $\qlbe<\ql$. 
Moreover, if $p\in(p_*,1)$ and $X\in\XX_\infty$, then $\ql\underset{\al\uparrow\infty}\longrightarrow-\infty$.  
	\item\label{ql consist} $\ql$ is consistent: ${}_{\al-1}Q(c;p)=c$ for all $c\in\R$. 
	\item\label{ql pos-hom} $\ql$ is positive-homogeneous: ${}_{\al-1}Q(\ka X;p)=\ka\,{}_{\al-1}Q(X;p)$ for all $\ka\in[0,\infty)$. 
	\item\label{ql tr-inv} $\ql$ is translation-invariant: ${}_{\al-1}Q(X+c;p)={}_{\al-1}Q(X;p)+c$ for all $c\in\R$. 
	\item\label{ql part mono} $\ql$ is partially monotonic: $\ql\le 
	{}_{\al-1}Q(c;p)$ if $X\le c$ for some $c\in\R$; also, $\ql\le{}_{\al-1}Q(X+c;p)$ for any $c\in[0,\infty)$. 
	\item\label{ql not mono} However, for any $\al\in(1,\infty)$ and any $p\in(0,1)$, $\ql$ is not monotonic in all $X\in\XX_\al$. 
	\item\label{ql not subadd} Consequently, for any $\al\in[1,\infty)$ and any $p\in(0,1)$, $\ql$ is not subadditive or convex in all $X\in\XX_\al$. 
\end{enumerate}
\end{proposition}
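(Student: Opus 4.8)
The plan is to treat $\ql$ throughout as the largest minimizer of the convex function $t\mapsto B_\al(X;p)(t)=t+p^{-1/\al}\|(X-t)_+\|_\al$; its convexity on $\R$ and its strict convexity on $(-\infty,x_{**}]\cap\R$ are supplied by Proposition~\ref{prop:B str conv}, and attainment of the minimum by Proposition~\ref{prop:att}, so that $\amin$ is a nonempty compact set and $\ql=\max\amin\in\R$ is well defined. The algebraic properties come out first, directly from the two identities $B_\al(\ka X;p)(t)=\ka\,B_\al(X;p)(t/\ka)$ for $\ka>0$ and $B_\al(X+c;p)(t)=c+B_\al(X;p)(t-c)$, which show that the set $\amin$ scales by $\ka$ and shifts by $c$; this yields positive homogeneity and translation invariance. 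Consistency is the case $X\equiv c$: there $B_\al(c;p)(t)=t+p^{-1/\al}(c-t)_+$ strictly decreases on $(-\infty,c)$ (because $p^{-1/\al}>1$) and increases on $[c,\infty)$, so its only minimizer is $c$. Throughout, the endpoint $\al=1$ reduces to the largest $(1-p)$-quantile via \eqref{eq:qlz=Q0}, for which the asserted properties are classical, so the substance lies in $\al\in(1,\infty)$.

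Next I would establish the computation of $\ql$. Fixing $t<x_*$ and writing $m_s:=\E(X-t)_+^s$, one has $\frac{\dd}{\dd t}\|(X-t)_+\|_\al=-m_{\al-1}\,m_\al^{1/\al-1}$, so that $B_\al(X;p)'(t)=1-p^{-1/\al}\,m_{\al-1}\,m_\al^{1/\al-1}$; setting this to zero and raising to the power $\al$ produces exactly the equation $\pl\al=p$ with $\pl\al$ as in \eqref{eq:pl}. The chain $\pl\al\le\P(X>t)\le P_0(X;t)$ follows by writing the middle factor of \eqref{eq:pl} as $(\|W\|_{\al-1}/\|W\|_\al)^{\al(\al-1)}$ for the conditional variable $W:=\big((X-t)\mid X>t\big)$ and invoking Lyapunov's inequality $\|W\|_{\al-1}\le\|W\|_\al$, which is strict exactly when the conditional law is nondegenerate, i.e.\ for $t<x_{**}$; and $\pl\al\to\P(X>t)$ as $\al\downarrow1$ is read off from \eqref{eq:pl} via $m_0=\P(X>t)$. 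Since $g:=m_{\al-1}\,m_\al^{1/\al-1}$ satisfies $g^\al=\pl\al$ and, by strict convexity, $B_\al(X;p)'=1-p^{-1/\al}g$ is strictly increasing on $(-\infty,x_{**})$, the function $\pl\al$ is continuous and strictly decreasing in $t$ there, with $\pl\al\to1$ as $t\to-\infty$ and $\pl\al\to p_*$ as $t\uparrow x_{**}$. Inverting $\pl\al=p$ then gives the unique root $t_{\al,p}$, hence $\amin=\{t_{\al,p}\}$ and $\ql=t_{\al,p}\in(-\infty,x_{**})$, and simultaneously the continuity and strict monotonicity of $\ql$ in $p$ together with the limits $\ql\to x_{**}$ as $p\downarrow p_*$ and $\ql\to-\infty$ as $p\uparrow1$. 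When $p\le p_*$ one has $\pl\al\equiv p_*\ge p$ on $(x_{**},x_*)$, forcing the minimizer to $x_*$, which reproduces $\ql=x_*=Q_0(X;p)$ (using Proposition~\ref{prop:inverse}).

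The monotonicity in $\al$ is the technical core, and I expect it to be the main obstacle. The reduction is clean: to get $\qlbe\le\ql$ for $\al<\be$ it suffices to show that for each fixed $t$ the value $\pl\al$ is nonincreasing in $\al$, since then $\,{}_{\be-1}P(X;t_{\al,p})\le{}_{\al-1}P(X;t_{\al,p})=p$, i.e.\ $B_\be(X;p)'(t_{\al,p})\ge0$, placing $t_{\al,p}$ at or to the right of the minimizer of $B_\be(X;p)$. To prove this key monotonicity, note that by the log-convexity of moments (H\"older) the map $q\mapsto\ln m_q$ is convex, so $c(q):=\frac{\dd}{\dd q}\ln m_q$ is nondecreasing; writing $\ln\pl\al=\al\ln m_{\al-1}-(\al-1)\ln m_\al$ and differentiating gives
\begin{equation*}
\frac{\dd}{\dd\al}\ln\pl\al=\al\,c(\al-1)-(\al-1)\,c(\al)-\int_{\al-1}^{\al}c(u)\,\dd u,
\end{equation*}
and since $\al\,c(\al-1)-(\al-1)\,c(\al)=c(\al-1)-(\al-1)\big(c(\al)-c(\al-1)\big)\le c(\al-1)\le\int_{\al-1}^{\al}c(u)\,\dd u$, this derivative is $\le0$, strictly when the law of $(X-t)_+$ is nondegenerate. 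This delivers the monotonicity of $\ql$ in $\al$ and the bound \eqref{eq:ql<Q0}; pushing the same estimate to the limit, where $\|(X-t)_+\|_{\al-1}/\|(X-t)_+\|_\al\to1$ while the exponent $\al(\al-1)\to\infty$ forces $\pl\al\to0$, gives $\ql\to-\infty$ as $\al\uparrow\infty$.

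Finally I would assemble the last four items. Partial monotonicity is immediate: $X\le c$ gives $x_*\le c$, hence $\ql\le x_*\le c={}_{\al-1}Q(c;p)$, while ${}_{\al-1}Q(X+c;p)=\ql+c\ge\ql$ for $c\ge0$ by translation invariance. For the failure of monotonicity I would fix $\al\in(1,\infty)$ and $p\in(0,1)$, choose a finitely supported $X$ with $p_*<p$ (so $\ql=t_{\al,p}\in(-\infty,x_{**})$), and raise its largest value to obtain $Y\ge X$; the elementary computation $\frac{\partial}{\partial w}\ln\pl\al\propto\E\big((X-t)_+^{\al-1}\big((X-t)_+-w\big)\big)<0$, where $w$ is the largest value of $(X-t)_+$, shows that raising the top atom strictly lowers $\pl\al$ at the fixed abscissa $t=t_{\al,p}$, so the root of the equation for $Y$ lies strictly to the left, giving ${}_{\al-1}Q(Y;p)<{}_{\al-1}Q(X;p)$ although $Y\ge X$. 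The non-subadditivity then follows logically rather than by further computation: if $\ql$ were subadditive then, being positively homogeneous, it would be convex, and for any $X\le Y$, writing $X=Y+(X-Y)$ with $X-Y\le0$, subadditivity together with the partial monotonicity and consistency just proved would force ${}_{\al-1}Q(X;p)\le{}_{\al-1}Q(Y;p)+{}_{\al-1}Q(X-Y;p)\le{}_{\al-1}Q(Y;p)$, i.e.\ full monotonicity, contradicting the preceding item. Hence $\ql$ is neither subadditive nor convex on $\XX_\al$.
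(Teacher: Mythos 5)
Your overall architecture is the paper's: you treat $\ql$ as the largest minimizer of the convex function $B_\al(X;p)(\cdot)$, characterize it via $B_\al(X;p)'(t)=1-\big(\pl\al/p\big)^{1/\al}$, get the chain $\pl\al\le\P(X>t)$ from log-convexity of moments (Lyapunov), invert the strictly decreasing map $t\mapsto\pl\al$ for part (ii), and derive non-subadditivity from non-monotonicity plus partial monotonicity exactly as the paper does. Two sub-arguments differ in a harmless way: you prove that $\pl\al$ is nonincreasing in $\al$ by differentiating $\ln\pl\al$ in $\al$ (the paper uses the equivalent discrete convex-combination inequality for $h(\ga)=\ln\E(X-t)_+^\ga$), and your non-monotonicity example is a perturbation of the top atom rather than the paper's one-line example $Y=0\le X$ with $X$ Bernoulli, where \eqref{eq:ql<x**} immediately gives $\ql<0={}_{\al-1}Q(Y;p)$.

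There is, however, one genuine gap: the claim that $\ql\underset{\al\uparrow\infty}\longrightarrow-\infty$. You argue that since $\|(X-t)_+\|_{\al-1}/\|(X-t)_+\|_\al\to1$ while the exponent $\al(\al-1)\to\infty$, one is ``forced'' to conclude $\pl\al\to0$. This is a $1^\infty$ indeterminate form, and the asserted limit is in fact false in general: for a fixed $t<x_{**}$ and a bounded $X$ with an atom of mass $p_*>0$ at $x_*$, one has $\E(X-t)_+^\ga\sim p_*(x_*-t)^\ga$ as $\ga\to\infty$, whence
\begin{equation*}
\pl\al=\frac{\E^\al(X-t)_+^{\al-1}}{\E^{\al-1}(X-t)_+^\al}\underset{\al\to\infty}\longrightarrow p_*>0 .
\end{equation*}
What you actually need is that for each fixed $t$ the limit (or at least the $\limsup$) of $\pl\al$ is $<p$, and establishing that uniformly requires a real argument about how the mass of $(X-t)_+$ concentrates near its essential supremum, which your one-line asymptotic does not supply. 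The paper avoids this entirely: it assumes $t_{\al,p}\to t_*\in(-\infty,x_{**})$ for contradiction, uses \eqref{eq:al>1} to get $Q_\al(X;p)=t_{\al,p}+p^{-1/\al}\|(X-t_{\al,p})_+\|_\al\to t_*+\|(X-t_*)_+\|_\infty=x_*$, and contrasts this with $Q_\al(X;p)\to Q_\infty(X;p)<x_*$ from the stability of $Q_\al(X;p)$ in $\al$ (Theorem~\ref{th:coher}) and \eqref{eq:inv}. You should either adopt that contradiction argument or prove the limit $\limsup_{\al\to\infty}\pl\al\le p_*$ honestly. Minor further glosses (the case $x_{**}=x_*$ with $p_*>0$ in the limit $\pl\al\to p_*$ as $t\uparrow x_{**}$, and the separate treatment of $\al=1$ in the last item) are acknowledged in your plan and are easily repaired along the paper's lines.
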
 

By \eqref{eq:ql<Q0}, $\ql$ is a lower bound on the true $(1-p)$-quantile $Q(X;p)$ of $X$. Therefore and in view of part~\eqref{ql comp} of Proposition~\ref{prop:ql}, $\ql$ may be referred to as the lower $(\al-1,1-p)$-quantile of the r.v.\ $X$.


\begin{ex}\label{ex:Qleft,X_ab}\ 

\vspace*{6pt}
\noindent
\begin{parbox}{.38\textwidth}
	{ \includegraphics[width=.36\textwidth]{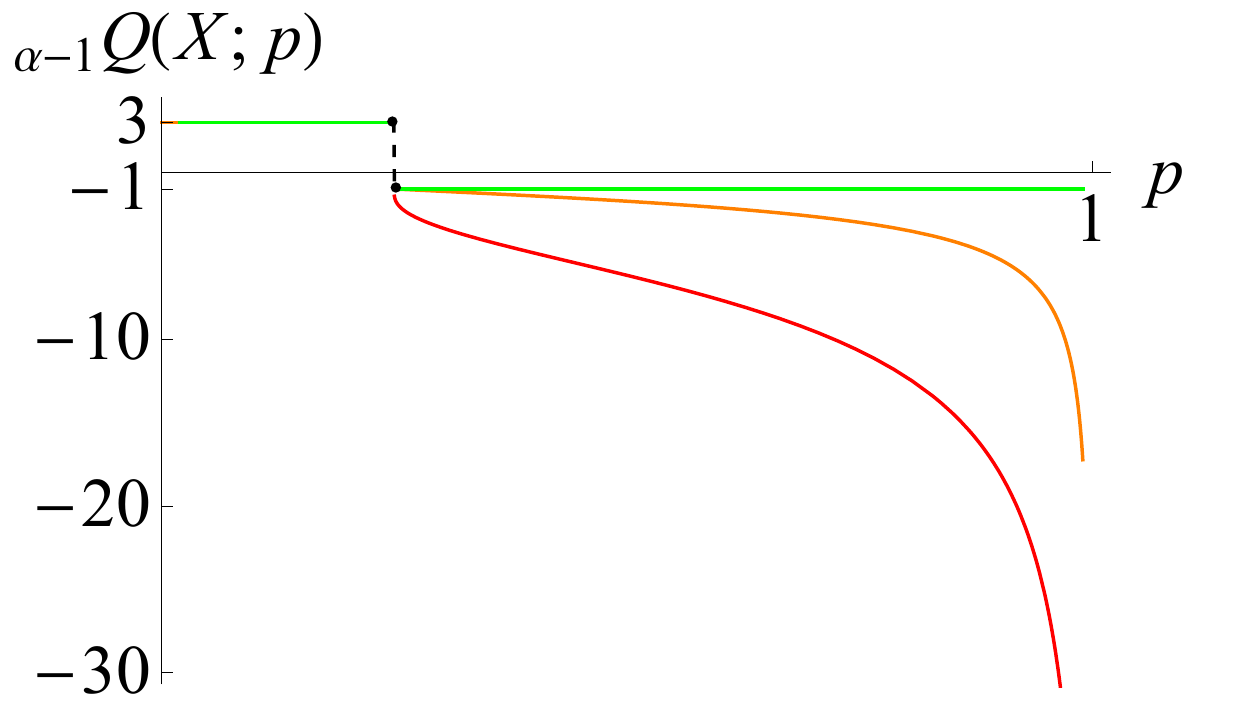} }
\end{parbox}
\begin{parbox}{.6\textwidth}
{Parts \eqref{ql decr in p} and \eqref{ql decr in al} of Proposition~\ref{prop:ql} are illustrated in the picture here on the left, with graphs $\big\{\big(p,\,\ql\big)\colon \break 0<p<0.99\big\}$ for a r.v.\ $X=X_{a,b}$ as in Examples~\ref{ex:P,X_ab} and \ref{ex:Q,X_ab}, with the same $a=1$ and $b=3$, and the values of $\al$ equal $1$ (green), $2$ (orange), and $5 
$ (red). 
Note that here $x_*=b=3$ and $x_{**}=-a=-1$. 
}
\end{parbox}
\end{ex}


\vspace*{6pt}

\begin{center}***\end{center}

One may conclude this section by an obvious but oftentimes rather useful observation 
that -- even when a minimizing value of $\la$ or $t$ in formulas \eqref{eq:P new}, \eqref{eq:P old}, or \eqref{eq:Q=inf} 
is not identified quite perfectly -- one still obtains, by those formulas, an upper bound on $P_\al(X;x)$ or $Q_\al(X;p)$ and hence on the true tail probability $\P(X\ge x)$ or the true quantile $Q(X;p)$, respectively.

\section{Implications for risk control/inequality modeling in finance/economics}\label{risk}

In financial literature -- see e.g.\ \cite{pflug00,rocka-ur02,kibzun-chern13}, the quantile bounds $Q_0(X;p)$  and $Q_1(X;p)$ are known as the \emph{value-at-risk} and \emph{conditional value-at-risk}, denoted as $\var_p(X)$ and $\cvar_p(X)$, respectively: 
\begin{equation}\label{eq:Q_1=cvar}
	Q_0(X;p)=\var_p(X)\quad\text{and}\quad Q_1(X;p)=\cvar_p(X);  
\end{equation} 
here, $X$ is interpreted as \emph{a priori} uncertain potential loss. 
The value of $Q_1(X;p)$ is also known as the expected shortfall (ES) \cite{acerbi-tasche02}, average value-at-risk (AVaR) \cite{rachev-etal}, and expected tail loss (ETL) \cite{litz-modest08}. 
As indicated in \cite{rocka-ur02}, at least in the case when there is no atom at the quantile point $Q(X;p)$, the quantile bound $Q_1(X;p)$ is also called the ``mean shortfall'' \cite{mausser-rosen}, whereas 
the difference $Q_1(X;p)-Q(X;p)$ is referred to as ``mean excess loss'' \cite{embrechts_etal,bassi_etal}. 


Greater values of $\al$ correspond to greater sensitivity to risk; cf.\ e.g.\ \cite{fishburn77}. 
For instance, let $X$ and $Y$ denote the potential losses corresponding to two different investments portfolios. 
Suppose that there are mutually exclusive events $E_1$ and $E_2$ and real numbers $p_*\in(0,1)$ and $\de\in(0,1)$ such that 
(i) $\P(E_1)=\P(E_2)=p_*/2$, (ii) the loss of either portfolio is $0$ if the event $E_1\cup E_2$ does not occur, (iii) the loss of the $X$-portfolio is $1$ if the event $E_1\cup E_2$ occurs, and (iv) the loss of the $Y$-portfolio is $1-\de$ if the event $E_1$ occurs, and it is $1+\de$ if the event $E_2$ occurs. 
Thus, the r.v.\ $X$ takes values $0$ and $1$ with probabilities $1-p_*$ and $p_*$, and the r.v.\ $Y$ takes values $0$, $1-\de$, and $1+\de$ with probabilities $1-p_*$, $p_*/2$, and $p_*/2$, respectively. 
So, $\E X=\E Y$, that is, the expected losses of the two portfolios are the same. 
Clearly, the distribution of $X$ is less dispersed than that of $Y$, both intuitively and also in the formal sense that $X\alle Y$ for all $\al\in[1,\infty]$. 
Therefore, everyone will probably say that the $Y$-portfolio is riskier than the $X$-portfolio. 
However, for any $p\in(p_*,1)$ it is easy to see, by \eqref{eq:Q_0}, that $Q_0(X;p)=0=Q_0(Y;p)$ and hence, in view of \eqref{eq:al=1}, $Q_1(Y;p)=\frac1p\,\E Y=\frac{p_*}p=\frac1p\,\E X=Q_1(X;p)$. 
Using also the continuity of $Q_\al(\cdot;p)$ in $p$, as stated in Theorem~\ref{th:coher}, one concludes that the $Q_1(\cdot;p)=\cvar_p(\cdot)$ risk value of the riskier $Y$-portfolio is the same as that of the less risky $X$-portfolio for all $p\in[p_*,1)$.  
Such indifference (which may also be referred to as insensitivity to risk) 
may generally be considered ``an unwanted characteristic'' \cite[pages~36, 48]{groot-haller04}. 

Let us now show that, in contrast with the risk measure $Q_1(\cdot;p)=\cvar_p(\cdot)$, the value of $Q_\al(\cdot;p)$ is sensitive to risk for all $\al\in(1,\infty)$ and all $p\in(0,1)$; that is, for all such $\al$ and $p$ and for the losses $X$ and $Y$ as above, $Q_\al(Y;p)>Q_\al(X;p)$. 
Indeed, take any $\al\in(1,\infty)$. 
By \eqref{eq:x_*,p_*} and \eqref{eq:x_**}, $x_{*,X}=1$, $p_{*,X}=p_*$, $x_{*,Y}=1+\de$, $x_{**,Y}=1-\de$, and $p_{*,Y}=p_*/2$. 
If $p\in(0,p_*/2]$ then, by part \eqref{it:p le p_*} of Proposition~\ref{prop:inverse}, 
$Q_\al(Y;p)=x_{*,Y}=1+\de>1=x_{*,X}=Q_\al(X;p)$. 
If now $p\in(p_*/2,1)$ then, by \eqref{eq:ql<x**}, 
$t_Y:={}_{\al-1}Q(Y;p)\in(-\infty,x_{**,Y})=(-\infty,1-\de)$. 
Also, by  strict version of Jensen's inequality
and the strict convexity of $u^\al$ in $u\in[0,\infty)$, $B_\al(X;p)(t)=t+p^{-1/\al}\|X-t\|_\al<t+p^{-1/\al}\|Y-t\|_\al=B_\al(Y;p)(t)$ for all $t\in(-\infty,1-\de]$. 
So, by \eqref{eq:al>1} and \eqref{eq:Q=inf}, 
$Q_\al(Y;p)=B_\al(Y;p)(t_Y)>B_\al(X;p)(t_Y)\ge Q_\al(X;p)$. 
Thus, it is checked that $Q_\al(Y;p)>Q_\al(X;p)$ for all $\al\in(1,\infty)$ and all $p\in(0,1)$.

The above example is illustrated in Figure~\ref{fig:sensitivity}, for $p_*=0.1$ and $\de=0.6$. It is seen that the sensitivity of the measure $Q_\al(\cdot;p)$ to risk \big(reflected especially by the gap between the red and blue lines for $p\in[p_*,1)=[0.1,\,1)$\big) increases from the zero sensitivity 
when $\al=1$ to an everywhere positive sensitivity when $\al=2$ to an everywhere greater positive sensitivity when $\al=5$. 

\begin{figure}[ht]
	\centering
		\includegraphics[width=1\textwidth]{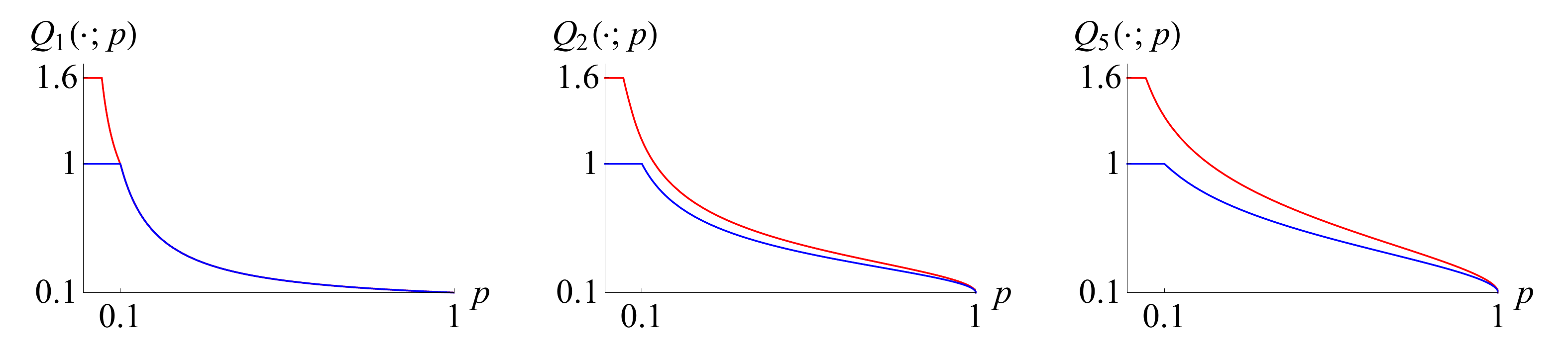}
	\caption{Sensitivity of $Q_\al(\cdot;p)$ to risk, depending on the value of $\al$: 
	graphs $\big\{\big(p,Q_\al(X;p)\big)\colon 0<p<1\big\}$ (blue) and $\big\{\big(p,Q_\al(Y;p)\big)\colon 0<p<1\big\}$ (red) for $\al=1$ (left panel), $\al=2$ (middle panel), and $\al=5$ (right panel).
	}
	\label{fig:sensitivity}
\end{figure}

\begin{center}***\end{center}

Based on an extensive and penetrating discussion of methods of measurement of market and nonmarket risks, Artzner \emph{et al} \cite{artzner.etal.99} 
concluded that, for a risk measure to be effective in risk regulation and management, it has to be 
\emph{coherent}, in the sense that it possess the translation
invariance, subadditivity, positive homogeneity, and monotonicity properties. In general, a risk measure, say $\hat\rho$, is a mapping of a linear space of 
real-valued r.v.'s on a given probability space into $\R$. The probability space (say $\Omega$) was assumed to be finite in \cite{artzner.etal.99}. More generally, one could allow $\Omega$ to be infinite, and then it is natural to allow $\hat\rho$ to take values $\pm\infty$ as well. In \cite{artzner.etal.99}, the r.v.'s (say $Y$) in the argument of the risk measure were called risks but at the same time interpreted as ``the investor's \emph{future net worth}''. Then the translation invariance was defined in \cite{artzner.etal.99} as the identity $\hat\rho(Y+rt)=\hat\rho(Y)-t$ for all r.v.'s $Y$ and real numbers $t$, where $r$ is a positive real number, interpreted as the rate of return. We shall, however, follow Pflug \cite{pflug00}, who considers a risk measure (say $\rho$) as a function of the potential cost/loss, say $X$, and then defines the translation invariance of $\rho$, quite conventionally, as the identity $\rho(X+c)=\rho(X)+c$ for all r.v.'s $X$ and real numbers $c$. 
The approaches in \cite{artzner.etal.99} and \cite{pflug00} are equivalent to each other, and the 
correspondence between them can be given by the formulas $\rho(X)=r\hat\rho(Y)=r\hat\rho(-X)$, $X=-Y$, and $c=-rt$. 
The positive homogeneity as defined in \cite{artzner.etal.99} can be stated as the identity $\rho(\la X)=\la\rho(X)$ for all r.v.'s $X$ and real numbers $\la\ge0$. 

\begin{corollary}\label{cor:coher}
For each $\al\in[1,\infty]$, the quantile bound $Q_\al(\cdot;p)$ is a coherent risk measure. 
\end{corollary}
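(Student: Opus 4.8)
The plan is to read the corollary off directly from Theorem~\ref{th:coher}, since coherence in the sense adopted here (following Pflug) is by definition the conjunction of exactly four properties, each of which has already been recorded for $Q_\al(\cdot;p)$. Concretely, I would fix $\al\in[1,\infty]$ and $p\in(0,1)$, regard $Q_\al(\cdot;p)$ as a functional on the convex cone $\XX_\al$ (which, as noted after Proposition~\ref{prop:P}, is closed under addition and under multiplication by nonnegative reals, and contains all constants), and then verify the four axioms one by one by citation. Translation invariance, $Q_\al(X+c;p)=Q_\al(X;p)+c$, is the \textbf{Translation invariance} clause; positive homogeneity, $Q_\al(\ka X;p)=\ka\,Q_\al(X;p)$ for $\ka\ge0$, is the \textbf{Positive homogeneity} clause; subadditivity, $Q_\al(X+Y;p)\le Q_\al(X;p)+Q_\al(Y;p)$, is the \textbf{Subadditivity} clause; and the monotonicity required of a coherent measure, namely $X\le Y\Rightarrow Q_\al(X;p)\le Q_\al(Y;p)$, is exactly the ``in particular'' assertion of the \textbf{Monotonicity in $X$} clause.

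The only point that constrains the range of $\al$ is subadditivity: among the four axioms, the first, third, and fourth hold for every $\al\in[0,\infty]$, whereas Theorem~\ref{th:coher} guarantees subadditivity only for $\al\in[1,\infty]$, and Proposition~\ref{prop:al<1} shows this is sharp in a strong sense. This is precisely why the corollary is stated for $\al\in[1,\infty]$ and not for all $\al$; thus the restriction is not an artifact of the proof but is forced by the notion of coherence itself.

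There is essentially no obstacle here, since all the substantive work was done in establishing Theorem~\ref{th:coher}. The one item worth a word is that the monotonicity axiom of coherence is the pointwise implication $X\le Y\Rightarrow Q_\al(X;p)\le Q_\al(Y;p)$, which is weaker than---and hence implied by---the monotonicity of $Q_\al(\cdot;p)$ with respect to the stochastic dominance of order $\al+1$ (via $X\le Y\Rightarrow X\st Y\Rightarrow X\alle Y$, using \eqref{eq:le-al-beta}), so no separate argument is needed beyond the ``in particular'' clause already recorded. Finally, I would reconcile the sign conventions: since $X$ is interpreted as loss and the translation invariance is taken in Pflug's form $\rho(X+c)=\rho(X)+c$, the verified properties match verbatim the definition of coherence used in the paragraph preceding the corollary, so no change of variables of the form $X=-Y$, $c=-rt$ is required beyond what is discussed there.
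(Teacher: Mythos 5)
Your proposal is correct and follows exactly the paper's route: the paper's entire justification of Corollary~\ref{cor:coher} is the single sentence that it follows immediately from Theorem~\ref{th:coher}, with the four coherence axioms (translation invariance, positive homogeneity, subadditivity for $\al\in[1,\infty]$, and monotonicity via the ``in particular'' clause) read off directly. Your extra remarks on why the restriction to $\al\in[1,\infty]$ is forced by subadditivity and on the Pflug sign convention are consistent with the surrounding discussion in the paper.
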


This follows immediately from Theorem~\ref{th:coher}. 

The usually least trivial of the four properties characterizing the coherence is the subadditivity  of a risk measure -- which, in the presence of the positive homogeneity, is equivalent to the convexity, as was pointed out earlier in this paper. As is well known and also discussed above, the value-at-risk measure $\var_p(X)$ is translation
invariant, positive homogeneous, and monotone (in $X$), but it fails to be subadditive. Quoting \cite[page~1458]{rocka-ur02}: ``The coherence of [$\cvar_p(X)$] is a formidable advantage not shared by any other widely
applicable measure of risk yet proposed.'' 
Corollary~\ref{cor:coher} above addresses this problem by providing an entire infinite family of coherent risk measures, indexed by $\al\in[1,\infty]$, including $\cvar_p(X)=Q_1(X;p)$ just as one member of the family. 

Theorem~\ref{th:coher} also provides additional monotonicity and other useful properties of the spectrum of risk measures $Q_\al(\cdot;p)$. 
The terminology we use to name some of these properties differs from the corresponding terminology used in \cite{artzner.etal.99}. Namely, what we referred to as the ``sensitivity'' in Theorem~\ref{th:coher} corresponds to the ``relevance'' in \cite{artzner.etal.99}. 
Also, in the present paper the ``model-independence'' means that the risk measure depends on the potential loss only through the distribution of the loss, rather than on the way to model the ``states of nature'', on which the loss may depend. In contrast, in \cite{artzner.etal.99} a measure of risk is considered ``model-free'' if it does not depend, not only on modeling the ``states of nature'', but, to a possibly large extent, on the distribution of the loss. 
The ``model-independence'' property is called ``law-invariance'' in \cite[Section~12.1.2]{frit-gian04}, where the consistency property was referred to as ``constancy''. 
An example of such a ``model-free'' risk measure is given by the Securities and Exchange Commission (SEC) rules, 
described e.g.\ in \cite[Subsection~3.2]{artzner.etal.99}; this measure of risk depends only on the set of all possible representations of the investment portfolio in question as a portfolio of long call spreads, that is, pairs of the form (a long call, a short call). 
If a measure of risk is not ``model-free'', then it is called ``model-dependent'' in \cite{artzner.etal.99}. 

\begin{center}***\end{center}

Yitzhaki \cite{yitzhaki82} utilized the Gini mean difference -- which had prior to that been mainly used as a measure of economic inequality -- 
to construct, somewhat implicitly, a measure of risk; this approach was further developed in \cite{delq-cillo06,cillo-delq11}. 
If (say) a r.v.\ $X$ is thought of as the income of randomly selected person in a certain state, then the Gini mean difference can be defined by the formula 
\begin{equation*}
G_H(X):=\E H(|X-\tX|), 	
\end{equation*}
where $\tX$ is an independent copy of $X$ and $H\colon[0,\infty)\to\R$ is a measurable function, usually assumed to be nonnegative and such that $H(0)=0$; clearly, given the function $H$, the Gini mean difference $G_H(X)$ depends only on the distribution of the r.v.\ $X$. So, if $H(u)$ is considered, for any $u\in[0,\infty)$, as the measure of inequality between two individuals with incomes $x$ and $y$ such that $|x-y|=u$, then \emph{the Gini mean difference $\E H(|X-\tX|)$ is the mean $H$-inequality in income between two individuals selected at random} (and with replacement, thus independently of each other). The most standard choice for $H$ is the identity function $\id$, so that $H(u)=\id(u)=u$ for all $u\in[0,\infty)$. 
Based on the measure-of-inequality $G_H$, one can define the risk measure 
\begin{equation}\label{eq:R_H}
	R_H(X):=\E X+G_H(X)=\E X+\E H(|X-\tX|), 
\end{equation}
where now the r.v.\ $X$ is interpreted as the uncertain loss on a given investment, with the term $G_H(X)=\E H(|X-\tX|)$ then possibly interpreted as a measure of the uncertainty. 
Clearly, when there is no uncertainty, so that the loss $X$ is in fact a nonrandom real constant, then the measure $G_H(X)$ of the uncertainty is $0$, assuming that $H(0)=0$. 
If $X\sim N(\mu,\si^2)$ (that is, $X$ is normally distributed with mean $\mu$ and standard deviation $\si>0$) and $H=\ka\id$ for some positive constant $\ka$, then $R_H(X)=\mu+\frac{2\ka}{\sqrt\pi}\,\si$, a linear combination of the mean and the standard deviation, so that in such a case we find ourselves in the realm of the Markowitz mean-variance risk-assessment framework. 

It is assumed that $R_H(X)$ is defined when both expected values in the last expression in \eqref{eq:R_H} are defined and are not infinite values of opposite signs -- so that these two expected values could be added, as needed in \eqref{eq:R_H}. 

It is clear that $R_H(X)$ is translation-invariant. 
Moreover, 
$R_H(X)$ is convex in $X$ if the function $H$ is convex and nondecreasing. 
Further, if $H=\ka\id$ for some positive constant $\ka$, then $R_H(X)$ is also positive-homogeneous. 

It was shown in \cite{yitzhaki82}, under an additional technical condition, that $R_H(X)$ is nondecreasing in $X$ with respect to the stochastic dominance of order $1$ if $H=\frac12\id$. Namely, the result obtained in \cite{yitzhaki82} is that if $X\st Y$ and the distribution functions $F$ and $G$ of $X$ and $Y$ are such that $F-G$ changes sign only finitely many times on $\R$, then $R_{\frac12\id}(X)\le R_{\frac12\id}(Y)$. 
A more general result was obtained in \cite{delq-cillo06}, which can be stated as follows: in the case when the function $H$ is differentiable, $R_H(X)$ is nondecreasing in $X$ with respect to the stochastic dominance of order $1$ if and only if $|H'|\le\frac12$. Cf.\ also \cite{cillo-delq11}. 
The proof in \cite{delq-cillo06} was rather long and involved; in addition, it used a previously obtained result of \cite{machina82}. 
Here we are going to give (in Appendix~\ref{proofs}) a very short, direct, and simple proof of the more general 

\begin{proposition}\label{prop:gini}
The risk measure $R_H(X)$ is nondecreasing in $X$ with respect to the stochastic dominance of order $1$ if and only if the function $H$ is $\frac12$-Lipschitz: $|H(x)-H(y)|\le\frac12\,|x-y|$ for all $x$ and $y$ in $[0,\infty)$. 
\end{proposition}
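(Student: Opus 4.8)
The plan is to prove both implications directly from the defining formula $R_H(X)=\E X+\E H(|X-\tX|)$, where $\tX$ is an independent copy of $X$; throughout I assume the relevant expectations are finite (the $\frac12$-Lipschitz bound $|H(u)|\le|H(0)|+\tfrac12 u$ for $u\ge0$ forces $G_H(X)<\infty$ whenever $\E|X|<\infty$, so this is the natural setting for a meaningful comparison).

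For sufficiency, suppose $H$ is $\frac12$-Lipschitz and $X\st Y$. First I would invoke the coupling representation \eqref{eq:st iff} to realize copies $X_1,Y_1$ on one probability space with $X_1\le Y_1$ a.s., and then take $(\tilde X_1,\tilde Y_1)$ an independent copy of the pair $(X_1,Y_1)$, so that $\tilde X_1\le\tilde Y_1$ a.s.\ as well. Writing $U:=Y_1-X_1\ge0$ and $\tilde U:=\tilde Y_1-\tilde X_1\ge0$, I would express
\[
R_H(Y)-R_H(X)=\E U+\E\big[H(|Y_1-\tilde Y_1|)-H(|X_1-\tilde X_1|)\big].
\]
The $\frac12$-Lipschitz hypothesis together with the reverse triangle inequality bounds the integrand in absolute value by $\tfrac12\big||Y_1-\tilde Y_1|-|X_1-\tilde X_1|\big|\le\tfrac12\big|(Y_1-X_1)-(\tilde Y_1-\tilde X_1)\big|=\tfrac12|U-\tilde U|$, whence $R_H(Y)-R_H(X)\ge\E U-\tfrac12\E|U-\tilde U|$. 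The decisive step is then the observation that, since $U$ and $\tilde U$ are nonnegative and identically distributed, $|U-\tilde U|\le U+\tilde U$ gives $\E|U-\tilde U|\le 2\E U$, so the right-hand side is $\ge0$ and $R_H(X)\le R_H(Y)$.

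For necessity, suppose $R_H$ is $\st$-monotone; I would test it against two one-parameter families of two-point distributions, fixing $s\ge0$, $h>0$, and $p\in(0,1)$. In the first family take $X$ on $\{0,s\}$ and $Y$ on $\{0,s+h\}$ with weights $1-p,p$: raising the upper atom gives $X\st Y$, the shared atom at $0$ makes the $H(0)$ contributions to $G_H$ cancel, and a direct computation yields $R_H(Y)-R_H(X)=ph+2p(1-p)\big[H(s+h)-H(s)\big]\ge0$; dividing by $p$ and letting $p\downarrow0$ produces $H(s+h)-H(s)\ge-\tfrac h2$. In the second family take $X$ on $\{0,s+h\}$ and $Y$ on $\{h,s+h\}$ with weights $1-p,p$: raising the \emph{lower} atom (which stays $\le s+h$) again gives $X\st Y$, while the spread shrinks from $s+h$ to $s$, so that $R_H(Y)-R_H(X)=(1-p)h+2p(1-p)\big[H(s)-H(s+h)\big]\ge0$; dividing by $1-p$ and letting $p\uparrow1$ produces $H(s+h)-H(s)\le\tfrac h2$. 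Combining the two bounds gives $|H(s+h)-H(s)|\le\tfrac h2$ for all $s\ge0$ and $h>0$, which is exactly the $\frac12$-Lipschitz property on $[0,\infty)$.

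I expect the main obstacle to be the sufficiency direction, and specifically the exact matching of the Lipschitz constant $\tfrac12$ with the Gini structure: the inequality $\E|U-\tilde U|\le\E(U+\tilde U)=2\E U$ for nonnegative i.i.d.\ $U,\tilde U$ is precisely what makes the constant $\frac12$ break even against the mean increase $\E U$, so any larger Lipschitz constant would spoil the conclusion. The remaining points are routine bookkeeping — justifying the algebra implicit in forming $R_H(Y)-R_H(X)$ under the stated integrability, and checking in the necessity constructions that the shared atom forces the $H(0)$-terms to cancel so that only the differences $H(s+h)-H(s)$ survive.
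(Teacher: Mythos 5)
Your proof is correct and follows essentially the same route as the paper's: the sufficiency direction is the identical coupling-plus-Lipschitz-plus-triangle-inequality argument, with your inequality $\E|U-\tilde U|\le 2\E U$ being exactly the paper's step $\E|X-\tX-Y+\tY|\le\E(|X-Y|+|\tX-\tY|)=2\E(Y-X)$. The only (cosmetic) difference is in the necessity direction, where your second test family raises the lower atom from $0$ to $h$ while the paper instead reflects the first family to the negative half-line; both are two-point comparisons that shrink the spread while increasing the mean, followed by a limit in $p$ to extract the sharp constant $\tfrac12$.
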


In Proposition~\ref{prop:gini}, it is not assumed that $H\ge0$ or that $H(0)=0$. 
Of course, if $H$ is differentiable, then the $\frac12$-Lipschitz condition is equivalent to the  condition $|H'|\le\frac12$ in \cite{delq-cillo06}. 

The risk measure $R_H(X)$ was called mean-risk (M-R) in \cite{cillo-delq11}. 

It follows from \cite{delq-cillo06} or Proposition~\ref{prop:gini} above that the risk measure $R_{\ka\id}(X)$ is coherent for any $\ka\in[0,\frac12]$. In fact, based on Proposition~\ref{prop:gini}, one can rather easily show more: 


\begin{proposition}\label{prop:gini coher}
The risk measure $R_H(X)$ is coherent if and only if $H=\ka\id$ for some $\ka\in[0,\frac12]$. 
\end{proposition}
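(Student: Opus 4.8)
The plan is to prove both implications, leaning on the elementary facts already recorded before the statement: that $R_H$ is always translation-invariant; that it is convex in $X$ (hence, given positive homogeneity, subadditive) when $H$ is convex and nondecreasing; that $R_{\ka\id}$ is positive-homogeneous; and, crucially, the $\frac12$-Lipschitz characterization of order-$1$ monotonicity in Proposition~\ref{prop:gini}. Throughout I will use that $R_H$ is \emph{law-invariant} (it depends on $X$ only through its distribution), so that monotonicity in the sense of coherence, i.e.\ with respect to the a.s.\ order, is equivalent to monotonicity with respect to $\st$: if $X\st Y$, then by \eqref{eq:st iff} there are copies $X_1\le Y_1$, whence $R_H(X)=R_H(X_1)\le R_H(Y_1)=R_H(Y)$, and the converse is trivial since $X\le Y$ a.s.\ implies $X\st Y$.

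For the \emph{if} direction, suppose $H=\ka\id$ with $\ka\in[0,\frac12]$. Translation invariance and positive homogeneity are already known. Writing $G(X):=\E|X-\tX|$, the triangle inequality applied to a joint independent copy $(\tX,\tY)$ of $(X,Y)$ gives $|X+Y-\tX-\tY|\le|X-\tX|+|Y-\tY|$, so $G$ is subadditive; hence $R_{\ka\id}(X)=\E X+\ka G(X)$ is subadditive for $\ka\ge0$. Since $\ka\id$ is $\frac12$-Lipschitz, Proposition~\ref{prop:gini} makes $R_{\ka\id}$ nondecreasing with respect to $\st$, and by the law-invariance remark above it is then monotone in the coherence sense. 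Thus all four coherence axioms hold.

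For the \emph{only if} direction, assume $R_H$ is coherent. First I would pin down the shape of $H$ using positive homogeneity on a two-point r.v.: taking $X$ uniform on $\{0,u\}$ makes $|X-\tX|$ equal $0$ or $u$ each with probability $\frac12$, so $R_H(X)=\frac u2+\frac12 H(0)+\frac12 H(u)$; comparing $R_H(\la X)=\la R_H(X)$ for all $\la\ge0$ yields $H(\la u)=\la H(u)+(\la-1)H(0)$. Setting $u=0$ and $\la=0$ forces $H(0)=0$, and then $H(\la u)=\la H(u)$; with $u=1$ this gives $H=\ka\id$ for $\ka:=H(1)$. It remains to locate $\ka$. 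Monotonicity together with the law-invariance equivalence and Proposition~\ref{prop:gini} shows $\ka\id$ is $\frac12$-Lipschitz, i.e.\ $|\ka|\le\frac12$. Finally, applying subadditivity to the pair $(X,-X)$, and using $G(-X)=G(X)$ and $R_H(0)=0$, gives $0=R_H(X-X)\le R_H(X)+R_H(-X)=2\ka\,G(X)$; choosing any non-degenerate $X$ (so $G(X)>0$) forces $\ka\ge0$. Hence $\ka\in[0,\frac12]$.

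The main obstacle is not any single calculation but the bookkeeping of \emph{which} axiom delivers \emph{which} constraint, and reconciling the two notions of monotonicity. The $\frac12$-Lipschitz bound is purely a monotonicity phenomenon (and is symmetric in the sign of $\ka$), whereas the one-sided constraint $\ka\ge0$ is genuinely a subadditivity phenomenon; the proof must therefore use both axioms, and the $(X,-X)$ test is the cleanest way to extract the sign. I would also take care that positive homogeneity on the single family of two-point distributions already determines $H$ on all of $[0,\infty)$, so that no richer class of test distributions is needed, and double-check that the degenerate endpoint $\ka=0$ (where $R_H=\E$) is correctly included.
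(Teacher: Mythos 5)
Your proof is correct, and while its skeleton matches the paper's (both directions, Proposition~\ref{prop:gini} for the monotonicity/Lipschitz link, positive homogeneity to force $H=\ka\id$, subadditivity to extract $\ka\ge0$), three of your sub-arguments genuinely differ from the paper's. For the ``if'' direction you prove subadditivity directly, by noting that $G(X):=\E|X-\tX|$ is itself subadditive via the triangle inequality applied to a joint independent copy $(\tX,\tY)$; the paper instead verifies convexity of $R_H$ for convex nondecreasing $H$ and then invokes the equivalence of convexity and subadditivity under positive homogeneity. Your route is shorter for the specific case $H=\ka\id$, though the paper's establishes the more general convexity fact stated earlier in Section~\ref{risk}. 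For pinning down the shape of $H$, you fix the two-point distribution at $p=\tfrac12$ and vary the scaling $\la$, obtaining the functional equation $H(\la u)=\la H(u)+(\la-1)H(0)$ directly; the paper fixes $u$ and varies $p$, so that $a=2p(1-p)$ sweeps $(0,\tfrac12)$ and a polynomial-identity argument gives $A=B=0$. These are equivalent in content, and yours is slightly more streamlined. Finally, for $\ka\ge0$ you apply subadditivity to the pair $(X,-X)$, getting $0=R_H(0)\le 2\ka\,G(X)$ for a bounded nondegenerate $X$; the paper instead tests subadditivity on two independent standard normals and compares $2\ka\E|Z|$ with $2\sqrt2\,\ka\E|Z|$. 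Your test is cleaner and avoids any distributional computation. You were also right to make explicit the reconciliation of $\st$-monotonicity (as in Proposition~\ref{prop:gini}) with almost-sure monotonicity via law-invariance and \eqref{eq:st iff}, a point the paper leaves implicit.
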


It is possible to indicate a relation -- albeit rather indirect -- of the risk measure $R_H(X)$, defined in \eqref{eq:R_H}, with the quantile bounds $Q_\al(X;p)$. 
Indeed, introduce 
\begin{equation}\label{eq:hat Q}
	\hat Q_\al(X;p)=\E X+p^{-1/\al}\,\big\|(X-\E X)_+\big\|_\al,   
\end{equation}
assuming $\E X$ exists in $\R$. 
By \eqref{eq:Q=inf}--\eqref{eq:B}, $\hat Q_\al(X;p)$ is another majorant of $Q_\al(X;p)$, obtained by using $t=\E X$ in \eqref{eq:Q=inf} as a surrogate of the minimizing value of $t$. 

The term $p^{-1/\al}\,\big\|(X-\E X)_+\big\|_\al$ in \eqref{eq:hat Q} is somewhat similar to the Gini mean-difference term $\E H(|X-\tX|)$, at least when $\al=1$ and (the distribution of) the r.v.\ $X$ is symmetric about its mean. 

Moreover, if the distribution of $X-\E X$ is symmetric and stable with index $\ga\in(1,2]$, then $\hat Q_1(X;p)=R_{\ka\id}(X)$ with 
$\ka=2^{-1-1/\ga}/p$. 

One may want to compare the two considered 
kinds of coherent measures of risk/inequality, $R_{\ka\id}(X)$ for $\ka\in[0,\frac12]$ and $Q_\al(X;p)$ for $\al\in[1,\infty]$ and $p\in(0,1)$. 
It appears that the latter measure is more flexible, as it depends on two parameters ($\al$ and $p$) rather than one just one parameter ($\ka$). 
Moreover, as Proposition~\ref{prop:Q close} shows, rather generally $Q_\al(X;p)$ retains a more or less close relation with the quantile $Q_0(X;p)$ -- which, recall, is the widely used value-at-risk (VaR). On the other hand, recall here that, in contrast with the VaR, $Q_\al(X;p)$ is coherent for $\al\in[1,\infty]$. 
However, both of these kinds of coherent measures appear useful, each in its own manner, representing two different ways to express risk/inequality. 


Formulas \eqref{eq:R_H} and \eqref{eq:hat Q} can be considered special instances of the general relation between risk measures and measures of inequality established in 
\cite{rocka-ur-zab}. 
Let $\XX_{\E}$ be a convex cone of real-valued r.v.\ $X\in\XX$ with a finite mean $\E X$ such that $\XX_{\E}$ contains all real constants. 
 
Largely following \cite{rocka-ur-zab}, let us say a coherent real-valued risk measure $R\colon\XX_{\E}\to(-\infty,\infty]$ 
is \emph{strictly expectation-bounded} if $R(X)>\E X$ for all $X\in\XX_{\E}$. 
\big(Note that here the r.v.\ $X$ represents the loss, whereas in \cite{rocka-ur-zab} it represents the gain; accordingly, $X$ in this paper corresponds to $-X$ in \cite{rocka-ur-zab}; also, in \cite{rocka-ur-zab} the cone $\XX_{\E}$ was taken to be the space $\LL^2$.\big) 
In view of Theorem~\ref{th:coher} and part~\eqref{it:>EX} of Proposition~\ref{prop:inverse}, it follows that $Q_\al(X;p)$ is a coherent and strictly expectation-bounded risk measure if $\al\in[1,\infty]$. 
Also (cf.\ \cite[Definition~1 and Proposition~1]{rocka-ur-zab}), let us say that a mapping $D\colon\XX_{\E}\to[0,\infty]$ is a \emph{deviation measure} if $D$ is subadditive, positive-homogeneous, and nonnegative with $D(X)=0$ if and only if $\P(X=c)=1$ for some real constant $c$;  
here $X$ is any r.v.\ in $\XX_{\E}$. 
Next (cf.\ \cite[Definition~2]{rocka-ur-zab}), let us say that a deviation measure $D\colon\XX_{\E}\to[0,\infty]$ is \emph{upper-range dominated} if $D(X)\le\sup\supp X-\E X$ for all $X\in\XX_{\E}$.  
Then (cf.\ \cite[Theorem~2]{rocka-ur-zab}), the formulas 
\begin{equation}\label{eq:R,D}
	D(X)=R(X-\E X)\quad\text{and}\quad R(X)=\E X+D(X)
\end{equation}
provide a one-to-one correspondence between all coherent strictly expectation-bounded risk measures $R\colon\XX_{\E}\to(-\infty,\infty]$ and all upper-range dominated deviation measures $D\colon\XX_{\E}\to[0,\infty]$. 

In particular, it follows that the risk measure $\hat Q_\al(\cdot;p)$, defined by formula \eqref{eq:hat Q}, is coherent for all $\al\in[1,\infty]$ and all $p\in(0,\infty)$. 
It also follows that $X\mapsto Q_\al(X-\E X;p)$ is a deviation measure. 
As was noted, $\hat Q_\al(X;p)$ is a majorant of $Q_\al(X;p)$. 
In contrast with $Q_\al(X;p)$, in general $\hat Q_\al(X;p)$ will not have such a close hereditary relation with the true quantile $Q_0(X;p)$ as e.g.\ the ones given in 
Proposition~\ref{prop:Q close}. For instance, if $\P(X\ge x)$ is like $x^{-\infty}$ then, by \eqref{eq:Q close}--\eqref{eq:K}, $Q_\al(X;p)\underset{p\downarrow0}\sim Q_0(X;p)$ for each $\al\in[0,\infty]$, whereas $\hat Q_\infty(X;p)=\infty$ for all real $p>0$. On the other hand, in distinction with the definition \eqref{eq:hat Q} of $\hat Q_\al(X;p)$, the expression \eqref{eq:Q=inf} for $Q_\al(X;p)$ requires minimization in $t$; however, that minimization will add comparatively little to the complexity of the problem of minimizing $Q_\al(X;p)$ subject to a usually large number of restrictions on the distribution of $X$; cf.\ again e.g.\ \cite[Theorem~2]{rocka-ur00}. 
%


One may also consider the following modification of $\hat Q_\al(X;p)$, which is still a majorant of $Q_\al(X;p)$, but is closer \big(than $\hat Q_\al(X;p)$ is\big) to the true quantile $Q_0(X;p)$ -- at least when $p\in(0,1)$ is close enough to $1$ \big(recall part~\eqref{ql decr in p} of 
Proposition~\ref{prop:
ql}\big): 
\begin{equation}\label{eq:hat Q^>}
\hat Q^<_\al(X;p):=\inf_{t\in(-\infty,\E X]}B_\al(X;p)(t); 
\end{equation}
cf.\ \eqref{eq:Q=inf}. 

The risk measure $\hat Q^<_\al(\cdot;p)$ is coherent and strictly expectation-bounded given the condition $\al\in[1,\infty]$, which will be assumed in this paragraph. The proof of the translation invariance, positive homogeneity, and subadditivity properties of $\hat Q^<_\al(\cdot;p)$ is almost the same as the proof of these properties of $Q_\al(\cdot;p)$, listed in Theorem~\ref{th:coher}. 
Then, to prove the monotonicity of $Q_\al(\cdot;p)$ (with respect to the order $\st$), it is enough, in view of the subadditivity of $\hat Q^<_\al(\cdot;p)$ and as in the proof of \cite[Theorem~2]{rocka-ur-zab}, to show that $X\le0$ implies $\hat Q^<_\al(X;p)\le0$, which  obtains indeed, because $\hat Q^<_\al(X;p)\le\hat Q_\al(X;p)$  and $\hat Q_\al(X;p)\le\hat Q_\al(0;p)=0$ if $X\le0$. 
Finally, $\hat Q^<_\al(\cdot;p)$ is strictly expectation-bounded, because it majorizes $Q_\al(\cdot;p)$, which is strictly expectation-bounded, as noted. 


\begin{center}***\end{center}

Recalling \eqref{eq:check Q} and following \cite{ortobelli_etal06,ortobelli_etal07,ortobelli_etal09}, one may also consider $
-F_{-X}^{(-\al)}(p)$ as a measure of risk. Here one will need the following semigroup identity, given in \cite[(8a)]{ortobelli_etal06} (cf.\ e.g.\ \cite[Remark~3.7]{pin98}):  
\begin{equation}\label{eq:check Q=}
	F_X^{(-\al)}(p)=\frac1{\Ga(\al-\nu)}\int_0^p(p-u)^{\al-\nu-1}F_X^{(-\nu)}(u)\dd u    
\end{equation}
whenever $0<\nu<\al<\infty$. 
%
%
The following proposition is well known. 
 
\begin{proposition}\label{prop:lorentz} If the r.v.\ $X$ is nonnegative then 
\begin{equation}\label{eq:L=}
	F_X^{(-2)}(p)=L_X(p)=-p\cvar_p(-X), 
\end{equation}
where $L_X$ is the Lorenz curve function, given by the formula 
\begin{equation}\label{eq:lorenz}
	L_X(p):=\int_0^p F_X^{-1}(u)
	\dd u.  
\end{equation}
\end{proposition}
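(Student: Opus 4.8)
The plan is to prove the two stated equalities separately. The first, $F_X^{(-2)}(p)=L_X(p)$, I would obtain as an immediate consequence of the semigroup identity \eqref{eq:check Q=}, while the second, $L_X(p)=-p\cvar_p(-X)$, I would derive from the Rockafellar--Uryasev representation \eqref{eq:al=1} of $\cvar_p$ combined with a classical Young-type duality between the quantile function $F_X^{-1}$ and the c.d.f.\ $F_X$.

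For the first equality I would specialize \eqref{eq:check Q=} to $\al=2$ and $\nu=1$. Then $\Ga(\al-\nu)=\Ga(1)=1$ and the kernel $(p-u)^{\al-\nu-1}=(p-u)^{0}\equiv1$, so \eqref{eq:check Q=} collapses to $F_X^{(-2)}(p)=\int_0^p F_X^{(-1)}(u)\,du$. Since $F_X^{(-1)}=F_X^{-1}$, the right-hand side is exactly $L_X(p)$ by the definition \eqref{eq:lorenz}. This step is immediate and requires no further work.

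For the second equality I would first rewrite the right-hand side in terms of $Q_1$. By \eqref{eq:Q_1=cvar} one has $\cvar_p(-X)=Q_1(-X;p)$, and applying \eqref{eq:al=1} to the r.v.\ $-X$ gives $Q_1(-X;p)=Q(-X;p)+\tfrac1p\,\E\big(-X-Q(-X;p)\big)_+$. Setting $q:=F_X^{-1}(p)$ and using $Q(-X;p)=-F_X^{-1}(p)=-q$ from \eqref{eq:F^{-1}}, this becomes $-p\,\cvar_p(-X)=pq-\E(q-X)_+$, so it remains to identify $L_X(p)$ with $pq-\E(q-X)_+$. I would do this via two elementary identities. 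Since $X\ge0$ and $q\ge0$, the layer-cake representation $(q-X)_+=\int_0^q\ii{X\le x}\,dx$ yields $\E(q-X)_+=\int_0^q F_X(x)\,dx$. For the Lorenz term I would apply Fubini to $\int_0^p F_X^{-1}(u)\,du=\int_0^p\!\int_0^q\ii{x<F_X^{-1}(u)}\,dx\,du$, which is valid because $0\le F_X^{-1}(u)\le q$ for $u\le p$, and then use the Galois connection $F_X^{-1}(u)\le x\iff F_X(x)\ge u$ to replace $\ii{x<F_X^{-1}(u)}$ by $\ii{u>F_X(x)}$; carrying out the inner $u$-integral and noting that $F_X(x)<p$ for $x<q$ gives $L_X(p)=\int_0^q\big(p-F_X(x)\big)\,dx=pq-\int_0^q F_X(x)\,dx$. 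Combining the two identities produces $L_X(p)=pq-\E(q-X)_+=-p\,\cvar_p(-X)$.

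The main obstacle, such as it is, is the bookkeeping around the generalized inverse $F_X^{-1}$ and possible atoms of $X$: one must invoke the Galois connection in precisely the form dictated by the convention $F_X^{-1}(u)=\inf\{x\colon F_X(x)\ge u\}$, and check that the strict inequality $F_X(x)<p$ holds for every $x<q$, so that the kernel $\big(p-F_X(x)\big)_+$ simplifies to $p-F_X(x)$ throughout the integration range, while the single point $x=q$ contributes nothing to the Lebesgue integral. Once these conventions are fixed, the remaining computation is entirely routine.
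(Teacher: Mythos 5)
Your proof is correct. For the first equality you do exactly what the paper does: specialize the semigroup identity \eqref{eq:check Q=} to $\al=2$, $\nu=1$, so that the kernel becomes identically $1$ and the definition \eqref{eq:lorenz} is recovered. For the second equality the routes diverge: the paper simply cites part~(i) of Theorem~3.1 of Ogryczak and Ruszczy\'nski together with \eqref{eq:Q=inf} for $\al=1$ and \eqref{eq:Q_1=cvar}, whereas you give a self-contained derivation, starting from the Rockafellar--Uryasev representation \eqref{eq:al=1} applied to $-X$ (legitimate, since $X\ge0$ gives $\E(-X)_+=0<\infty$, so $-X\in\XX_1$), reducing the claim to the identity $L_X(p)=pq-\E(q-X)_+$ with $q=F_X^{-1}(p)$, and then proving that identity via the layer-cake formula $\E(q-X)_+=\int_0^q F_X(x)\dd x$, Tonelli, and the Galois connection $F_X^{-1}(u)\le x\iff F_X(x)\ge u$. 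Your bookkeeping around the generalized inverse is the part that actually needs care, and you get it right: monotonicity of $F_X^{-1}$ gives $0\le F_X^{-1}(u)\le q$ for $u\le p$, the definition of the infimum gives $F_X(x)<p$ for every $x<q$ so that $\big(p-F_X(x)\big)_+=p-F_X(x)$ on the integration range, and the endpoint $x=q$ is Lebesgue-null. What your route buys is independence from the external reference and an explicit elementary argument; what the paper's route buys is brevity and a pointer to the literature where the relation between $\cvar$ and the Lorenz curve is treated in greater generality.
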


Indeed, the first equality in \eqref{eq:L=} is the special case of the identity \eqref{eq:check Q=} with $\al=2$ and $\nu=1$, and the second equality in \eqref{eq:L=} follows by \cite[part~(i) of Theorem~3.1]{ogr-rusz04_SIAM}, identity \eqref{eq:Q=inf} for $\al=1$, and the second identity in \eqref{eq:Q_1=cvar}. 
Cf.\ \cite[Theorem~2]{kibzun-kuzn} and \cite{atkinson70,muliere-scarsini}. 


Using 
\eqref{eq:check Q=} 
with $\nu=2$, $\al+1$ in place of $\al$, and $-X$ in place of $X$ together with  Proposition~\ref{prop:lorentz}, one has  
\begin{equation}\label{eq:mix}
	-F_{-X}^{(-\al-1)}(p)=\frac1{\Ga(\al-1)}\int_0^p(p-u)^{\al-2}\,u\cvar_u(X)\dd u      
\end{equation} 
for any $\al\in(1,\infty)$. 
Since $\cvar_u(X)$ is a coherent risk measure, it now follows that, as noted in \cite{ortobelli_etal07}, 
$-F_{-X}^{(-\al-1)}(p)$ is a coherent risk measure as well, again for $\al\in(1,\infty)$; by \eqref{eq:L=}, this conclusion will hold for $\al=1$. 
%
However, one should remember that the expression $F_X^{(-\al)}(p)$ was defined 
only when the r.v.\ $X$ is nonnegative (and otherwise some of the crucial considerations above will not hold). 
Thus, the risk measure $-F_{-X}^{(-\al-1)}(p)$ is defined only if $X\le0$ almost surely.  

%

In view of \eqref{eq:mix}, this risk measure is a mixture of the coherent risk measures $\cvar_u(X)$ and thus a member of the general class of  
%
the so-called spectral risk measures \cite{acerbi02}, which are precisely the mixtures, over the values $u\in(0,1)$, of the risk measures $\cvar_u(X)$; thus, all spectral risk measures are automatically coherent. 
However, in general such measures will lack such an important variational representation as the one given by formula \eqref{eq:Q=inf} for the risk measure $Q_\al(X;p)$. 
Of course, for any ``mixing'' nonnegative Borel measure $\mu$ on the interval $(0,1)$ and the corresponding spectral risk measure 
\begin{equation*}
	\cvar_\mu(X):=\int_{(0,1)}\cvar_u(X)\,\mu(\dd u), 
\end{equation*}
one can write 
\begin{equation}\label{eq:many mins} 
	\cvar_\mu(X)=\int_{(0,1)}\inf_{t\in\R}\big(t+\tfrac1u\,\|(X-t)_+\|_1\big)\,\mu(\dd u), 
\end{equation}
in view of \eqref{eq:Q_1=cvar} and \eqref{eq:Q=inf}--\eqref{eq:B}. 
However, in contrast with \eqref{eq:Q=inf}, the minimization (in $t\in\R$) in \eqref{eq:many mins} needs in general to be done for each of the infinitely many values of $u\in(0,1)$.  
If the r.v.\ $X$ takes only finitely many values, then the expression of $\cvar_\mu(X)$ in \eqref{eq:many mins} can be rewritten as a finite sum, so that the minimization in $t\in\R$ will be needed only for finitely many values of $u$; cf.\ e.g.\ the optimization problem on page~8 in \cite{ortobelli_etal07}. 

On the other hand, one can of course consider arbitrary mixtures in $p\in(0,1)$ and/or $\al\in[1,\infty)$ of the risk measures $Q_\al(X;p)$. 
Such mixtures will automatically be coherent. Also, all mixtures of the measures $Q_\al(X;p)$ in $p$ will be nondecreasing in $\al$, and all mixtures of $Q_\al(X;p)$ in $\al$ will be nonincreasing in $p$. 

Deviation measures such as the ones studied in \cite{rocka-ur-zab} and discussed in the paragraph containing \eqref{eq:R,D} can be used as measures of economic inequality if the r.v.\ $X$ models, say, the random income/wealth -- defined as the income/wealth of an (economic) unit chosen at random from a population of such units. Then, according to the one-to-one correspondence given by \eqref{eq:R,D}, coherent risk measures $R$ translate into deviation measures $D$, and vice versa. 


However, the risk measures $Q_\al(\cdot;p)$ themselves can be used to express certain aspects of economic inequality directly, without translation into deviation measures. 
For instance, if $X$ stands for the random wealth then the statement $Q_1(X;0.01)=30\E X$ formalizes the common kind of expression ``the wealthiest 1\% own 30\% of all wealth'',  provided that the wealthiest 1\% can be adequately defined, say as follows: there is a threshold wealth value $t$ such that the number of units with wealth greather than or equal to $t$ is $0.01N$, where $N$ is the number of units in the entire population. 
Then (cf.\ \eqref{eq:Q_1,simple}) 
$0.01\,N\,Q_1(X;0.01)=0.01\,N\,\E(X|X\ge t)=N\E X\ii{X\ge t}=0.30\,N\E X$, whence indeed $Q_1(X;0.01)=30\E X$. 
Similar in spirit expressions of economic inequality in terms of $Q_\al(X;p)$ can be provided for all $\al\in(0,\infty)$. For instance, suppose now that $X$ stands for the annual income of a randomly selected household, whereas $x$ is a particular annual household income level in question. Then, in view of \eqref{eq:Q=inf}--\eqref{eq:B}, the inequality 
$Q_\al(X;p)\ge x$ means that for any (potential) annual household income level $t$ 
less than the maximum annual household income level $x_{*,X}$ in the population, 
the conditional $\al$-mean $\E\big((X-t)^\al|X>t\big)^{1/\al}$ of the excess $(X-t)_+$ of the random income $X$ over $t$ is no less than $\big(\frac p{\P(X>t)}\big)^{1/\al}$ times the excess $(x-t)_+$ of the income level $x$ over $t$. 
Of course, the conditional $\al$-mean $\E\big((X-t)^\al|X>t\big)^{1/\al}$ is increasing in $\al$. Thus, using the measure $Q_\al(X;p)$ of economic inequality
with a greater value of $\al$ means 
treating high values of the economic variable $X$ in a more progressive/sensitive manner. One may also note here that the above interpretation of the inequality 
$Q_\al(X;p)\ge x$ is a ``synthetic'' statement in the sense that is provides information concerning all values of potential interest of the threshold annual household income level $t$. 

Not only the upper bounds $Q_\al(X;p)$ on the quantile $Q(X;p)$, but also the upper bounds $\P_\al(X;x)$ on the tail probability $\P(X\ge x)$ may be considered measures of risk/inequality.  
Indeed, if $X$ is interpreted as the potential loss, then the tail probability $\P(X\ge x)$ corresponds to the classical safety-first (SF) risk measure; see e.g.\ 
\cite{roy52,giacom-orto04}. 

Using variational formulas -- of which formulas \eqref{eq:P new}, \eqref{eq:P old}, and \eqref{eq:Q=inf} are examples -- to define or compute measures of risk is not peculiar to the present paper.  
%
Indeed, as mentioned previously, the special case of \eqref{eq:Q=inf} with $\al=1$ is the well-known variational representation \eqref{eq:al=1} of $\cvar$, obtained in \cite{rocka-ur00,pflug00,rocka-ur02}.
The risk measure given by the SEC rules \cite[Subsection~3.2]{artzner.etal.99}, also mentioned before, is another example where the calculations are done, in effect, according to a certain minimization formula, which is somewhat implicit and complicated in that case. 

One can now list some of the advantages of the risk/inequality measures $P_\al(X;x)$ and $Q_\al(X;p)$: 
\begin{itemize} 
	\item $P_\al(X;x)$ and $Q_\al(X;p)$ are three-way monotonic and three-way stable -- in $\al$, $p$, and $X$. 
	\item The monotonicity in $X$ is graded continuously in $\al$, resulting in various, controllable degrees of sensitivity of $P_\al(X;x)$ and $Q_\al(X;p)$ to financial risk/economic inequality. 
	\item $x\mapsto P_\al(X;x)$ is the tail-function of a certain probability distribution. 
	\item $Q_\al(X;p)$ is a $(1-p)$-percentile of that probability distribution.  
	\item For small enough values of $p$ the quantile bounds $Q_\al(X;p)$ are close enough to the corresponding true quantiles $Q(X;p)$ provided that the right tail of the distribution of $X$ is light enough and regular enough, depending on $\al$.
	\item $P_\al(X;x)$ and $Q_\al(X;p)$ are solutions to mutually dual optimizations problems, which can be comparatively easily incorporated into more specialized optimization problems, with additional restrictions, say on the distribution of the random variable $X$. 
	\item $P_\al(X;x)$ and $Q_\al(X;p)$ are effectively computable. 
	\item Even when the corresponding minimizer 
is not identified quite perfectly -- one still obtains an upper bound on the risk/inequality measures  $P_\al(X;x)$ or $Q_\al(X;p)$.  
	\item Optimal upper bounds on $P_\al(X;x)$ and hence on $Q_\al(X;p)$ over important classes of r.v.'s $X$ represented (say) as sums of independent r.v.'s $X_i$ with restrictions on moments of the $X_i$'s and/or sums of such moments can be given, as is done e.g.\ in \cite{pin99,bent-liet02,bent-ap,normal,asymm,pin-hoeff}. 
	\item The quantile bounds $Q_\al(X;p)$ with $\al\in[1,\infty]$ constitute a spectrum of coherent measures of financial risk and economic inequality. 
	\item The r.v.'s $X$ of which the measures $P_\al(X;x)$ and $Q_\al(X;p)$ are taken are allowed to take values of both signs. In particular, if, in a context of economic inequality, $X$ is interpreted as the net amount of assets belonging to a randomly chosen economic unit, then a negative value of $X$ corresponds to a unit with more liabilities than paid-for assets. Similarly, if $X$ denotes the loss on a financial investment, then a negative value of $X$ will obtain when there actually is a net gain. 
\end{itemize}
Some of these advantages, and especially their totality, appear to be unique to the bounds proposed here. 

\medskip

\textbf{Acknowledgment.}\ I am pleased to thank Emmanuel Rio for 
the mentioned communication \cite{rio06.17.13}, which also included a reference to \cite{kibzun-chern13} and in fact sparked the study presented here. 

\appendix
\section{Proofs}\label{proofs}


\begin{proof}[Proof of Proposition~\ref{prop:P}]
Part (i) of the proposition follows immediately from \eqref{eq:P new}, \eqref{eq:A new}, and \eqref{eq:f incr,cont}. 

Parts (ii) and (iii) follow by \eqref{eq:P old}--\eqref{eq:A old}. Indeed, 
$\E(X-t)_+^\al\ge\E X_+^\al/2^{(\al-1)_+}-|t|^\al=\infty$ for all real $x$ and $t$ if $\E X_+^\al=\infty$, 
and $\E e^{(X-x)/t}=e^{-x/t}\E e^{X/t}=\infty$ for all real $x$ and $t>0$ if $\E e^{\la X}=\infty$ for all real $\la>0$. 

Concerning part (iv) of Proposition~\ref{prop:P}, assume indeed that $\al\in(0,\infty)$ and $\E X_+^\al<\infty$. Then, for any $x>0$, \eqref{eq:P old}--\eqref{eq:A old} imply $P_\al(X;x)\le\frac{\E X_+^\al}{x_+^\al}\to0$ as $x\to\infty$. On the other hand, obviously $P_\al(X;x)\ge0$ for all real $x$. So, indeed, $P_\al(X;x)\to0$ as $x\to\infty$. 

By \eqref{eq:P,tA}, \eqref{eq:A new}, and \eqref{eq:f}, $P_\al(X;x)\le A_\al(X;x)(0)=h_\al(0)=1$. On the other hand, by \eqref{eq:0<al},  $P_\al(X;(-\infty)+)\ge P_0(X;(-\infty)+)=1$. So, indeed $P_\al(X;(-\infty)+)=1$.  

Thus, part (iv) of Proposition~\ref{prop:P} is proved. 

The proof of part (v) is rather similar to that of part (iv). Assume indeed that $\al=\infty$ and $\E e^{\la_0 X}<\infty$ for some real $\la_0>0$. Then 
$P_\infty(X;x)\le\E e^{\la_0(X-x)}=e^{-\la_0 x}\E e^{\la_0 X}\to0$ as $x\to\infty$. Since $P_\infty(X;x)\ge0$ for all real $x$, one indeed has $P_\infty(X;x)\to0$ as $x\to\infty$. 

As for the proof of the statement that $P_\al(X;x)\to1$ as $x\to-\infty$ for $\al=\infty$, it is the same as the corresponding proof for $\al\in(0,\infty)$. 
  
Thus, part (v) of Proposition~\ref{prop:P} is proved as well. 
\end{proof}

\begin{proof}[Proof of Proposition~\ref{prop:P,+}]\ 

\eqref{it:x ge x_*}  Let us first verify part \eqref{it:x ge x_*}. 
For $\al=0$, this follows immediately from the equality in \eqref{eq:0<al} and the definitions in \eqref{eq:x_*,p_*}. 

Take then any $\al\in(0,\infty]$. 
Take indeed any $x\in[x_*,\infty)$. 

If $\al\in(0,\infty)$ and $u\in(-\infty,x_*]$, then $h_\al\big(\la(u-x)\big)=\big(1+\la(u-x)/\al\big)_+^\al\underset{\la\to\infty}\longrightarrow\ii{u=x}$;  
so, by \eqref{eq:A new}, 
the condition $X\in\XX_\al$, and dominated convergence, 
$A_\al(X;x)(\la)
=\E h_\al\big(\la(X-x)\big)\underset{\la\to\infty}\longrightarrow\P(X=x)$. 
The case $\al=\infty$ is similar: $A_\infty(X;x)(\la)
=\E e^{\la(X-x)}\ii{X\le x}\underset{\la\to\infty}\longrightarrow\P(X=x)$.   

So, by \eqref{eq:P new}, $P_\al(X;x)\le\P(X=x)=\P(X\ge x)$. Now part \eqref{it:x ge x_*}  of Proposition~\ref{prop:P,+} follows in view of the inequality in \eqref{eq:0<al}.  

\eqref{it:x<x_*} Part \eqref{it:x<x_*} of Proposition~\ref{prop:P,+} follows because, by \eqref{eq:0<al} and \eqref{eq:x_*,p_*},   
$P_\al(X;x)\ge\P(X\ge x)>0$ for all $x\in(-\infty,x_*)$. 

\eqref{it:conc} Concerning part~\eqref{it:conc} of Proposition~\ref{prop:P,+}, 
consider first the case $\al\in(0,\infty)$. Then, by 
\eqref{eq:P old}, the function 
\begin{equation}\label{eq:^-1/al}
(-\infty,x_*]\cap\R\ni x\mapsto P_\al(X;x)^{-1/\al}\in[0,\infty]	
\end{equation}
is the pointwise supremum, in $t\in(-\infty,x_*)$, of the family of continuous convex functions 
$(-\infty,x_*]\cap\R\ni x\mapsto\frac{(x-t)_+}{\|(X-t)_+\|_\al}\in[0,\infty]$. 
So, the function \eqref{eq:^-1/al} is convex. 
It is also finite on the interval $(-\infty,x_*)$, by part \eqref{it:x<x_*} of Proposition~\ref{prop:P,+}. 
So, the function \eqref{eq:^-1/al} is continuous on $(-\infty,x_*)$. 
Moreover, this function is 
lower-semicontinuous and nondecreasing, and hence continuous at the point $x_*$, in the case when $x_*\in\R$. 
Thus, the function \eqref{eq:^-1/al} is continuous on the entire set $(-\infty,x_*]\cap\R$,    
with respect to the natural topologies on $(-\infty,x_*]\cap\R$ and $[0,\infty]$. 

The case $\al=\infty$ is considered quite similarly. Here, instead of \eqref{eq:^-1/al}, one works with the function 
\begin{equation}\label{eq:-ln}
(-\infty,x_*]\cap\R\ni x\mapsto-\ln P_\al(X;x)\in(-\infty,\infty], 	
\end{equation}
which is 
the pointwise supremum, in real $\la>0$ such that $\E e^{\la X}<\infty$, of the family of continuous convex (in fact, affine) functions 
$\R\ni x\mapsto-\ln\E e^{\la(X-x)}=\la x-\ln\E e^{\la X}\in(-\infty,\infty]$. 
\big(Actually, the values of the latter family of functions are all real, for $\la>0$ such that $\E e^{\la X}<\infty$, whereas all the values of the function \eqref{eq:-ln} are in $[0,\infty]$; however, here we take the union, $(-\infty,\infty]$, of the sets $\R$ and $[0,\infty]$ as an interval which is guaranteed to contain all possible values of all the convex functions under consideration.\big) 
Here we also use the standard conventions $\ln0:=-\infty$ and $e^{-\infty}:=0$; concerning the continuity of functions with values in the set $(-\infty,\infty]$, we use the natural topology on this set. 

\eqref{it:cont} Let us now turn to part \eqref{it:cont} of Proposition~\ref{prop:P,+}. Consider first the case $\al\in(0,\infty)$. 
Then, since the map $[0,\infty]\ni r\mapsto r^{-\al}$ is continuous, it follows by part \eqref{it:conc} of Proposition~\ref{prop:P,+} that $P_\al(X;x)$ is indeed 
continuous in $x\in(-\infty,x_*)$ and left-continuous in $x$ at $x_*$ if $x_*\in\R$. 
The case $\al=\infty$ is quite similar; here, instead of the map $[0,\infty]\ni r\mapsto r^{-\al}$, one should use the continuous map $(-\infty,\infty]\ni r\mapsto e^{-r}$.

\eqref{it:left-cont} That the function  
$\R\ni x\mapsto P_\al(X;x)$ is left-continuous follows immediately from parts~\eqref{it:cont} and \eqref{it:x ge x_*} if $\al\in(0,\infty]$, and from the equality in \eqref{eq:0<al} if $\al=0$. 

\eqref{it:x_al incr} That $x_\al$ is nondecreasing in $\al\in[0,\infty]$ follows immediately from the definition of $x_\al$ in \eqref{eq:x_al} and \eqref{eq:mono in al}. 
That $x_\al<\infty$ follows because, by 
\eqref{eq:P decr in x},  
$P_\al(X;x)\to0<1$ as $x\to\infty$. 

\eqref{it:EX} 
If 
$x\in(-\infty,\E X]$ then, by Jensen's inequality, 
$$A_1(X;x)(\la)=\E\big(1+\la(X-x)\big)_+\ge\big(1+\la(\E X-x)\big)_+\ge1$$ 
for all $\la\in(0,\infty)$, whence, by \eqref{eq:P new}, $P_1(X;x)\ge1$, and so, by 
\eqref{eq:mono in al}, $P_\al(X;x)\ge1$ for all $\al\in[1,\infty]$. 

On the other hand, if 
$x\in(\E X,\infty)$ then $A_\infty(X;x)(\la)=\E e^{\la(X-x)}<\E e^{0(X-x)}=1$ for all $\la$ in a right neighborhood of $0$ -- because the right derivative of $\E e^{\la(X-x)}$ in $\la$ at $\la=0$ is $\E(X-x)<0$;  
therefore, $P_\infty(X;x)<1$, and so, again by \eqref{eq:mono in al}, 
$P_\al(X;x)<1$ for all $\al\in[1,\infty]$. 

This completes the proof of part \eqref{it:EX} of Proposition~\ref{prop:P,+}. 

\eqref{it:x_al<x_*} By part \eqref{it:x ge x_*} of Proposition~\ref{prop:P,+}, $P_\al(X;x)=0<1$ for all $x\in(x_*,\infty)$, so that $(x_*,\infty)\subseteq E_\al(1)$, which implies $x_\al\le x_*$. 

Let us show that $x_\al=x_*$ if and only if $p_*=1$. 
By the definition of $x_\al$ in \eqref{eq:x_al} and \eqref{eq:P decr in x}, $P_\al(X;x)=1$ for all $x\in(-\infty,x_\al)$. 
So, by part \eqref{it:left-cont} of Proposition~\ref{prop:P,+} and 
the inequality $x_\al<\infty$ in part~\eqref{it:x_al incr} of Proposition~\ref{prop:P,+}, 
\begin{equation}\label{eq:P=1}
	x_\al>-\infty\implies P_\al(X;x_\al)=\lim_{x\uparrow x_\al}P_\al(X;x)=1.   
\end{equation}

If now $x_\al=x_*$, then $x_\al>-\infty$ by the definition of $x_*$ in \eqref{eq:x_*,p_*};  
so, by part~\eqref{it:x ge x_*} of Proposition~\ref{prop:P,+} and \eqref{eq:P=1}, $p_*=P_\al(X;x_*)=P_\al(X;x_\al)=1$, which proves the implication $x_\al=x_*\implies p_*=1$. 
Vice versa, suppose that $p_*=1$. Then necessarily $x_*\in\R$. Moreover, by part~(i) of Proposition~\ref{prop:P} and part~\eqref{it:x ge x_*} of Proposition~\ref{prop:P,+}, for all $x\in(-\infty,x_*]$ one has $P_\al(X;x)\ge P_\al(X;x_*)=p_*=1$, so that $E_\al(1)\subseteq(x_*,\infty)$ and hence $x_\al\ge x_*$. Now the conclusion $x_\al=x_*$ follows by the already established inequality $x_\al\le x_*$. 

\eqref{it:J_al} By the definition of $x_\al$ in \eqref{eq:x_al} and part (i) of Proposition~\ref{prop:P}, the set $E_\al(1)$ is an interval with endpoints $x_\al$ and $\infty$. 
So, by the 
inequality $x_\al<\infty$ in part~\eqref{it:x_al incr} of Proposition~\ref{prop:P,+}, 
 $E_\al(1)\ne\emptyset$.  
Thus, to verify part \eqref{it:J_al} of Proposition~\ref{prop:P,+}, it is enough to show that $x_\al\notin E_\al(1)$. If $x_\al=-\infty$ then this follows immediately from the definition of $E_\al(p)$ in \eqref{eq:J_al} as a subset of $\R$, and if $x_\al>-\infty$ then the same conclusion follows by \eqref{eq:P=1}. 

\eqref{it:P_al,x<x_al} Part~\eqref{it:P_al,x<x_al} of Proposition~\ref{prop:P,+} follows immediately from part~\eqref{it:J_al} of Proposition~\ref{prop:P,+} and the inequality $P_\al(X;x)\le1$, which latter in turn follows by \eqref{eq:P decr in x}. 

\eqref{it:str decr} 
Consider first the case $\al\in(0,\infty)$. 
By part\eqref{it:x ge x_*} of Proposition~\ref{prop:P} and \eqref{eq:P decr in x}, the function \eqref{eq:^-1/al} is nondecreasing, from the value $1$ 
at $-\infty$. 
It is easy to see that these conditions, together with the convexity of the function \eqref{eq:^-1/al}, 
imply that this function 
is strictly increasing on the set $\{x\in(-\infty,x_*]\cap\R\colon 1<P_\al(X;x)^{-1/\al}<\infty\}$. 
In view of part~\eqref{it:x<x_*} of Proposition~\ref{prop:P,+}, this 
implies that the function $x\mapsto P_\al(X;x)$ is strictly decreasing on the set 
$(-\infty,x_*)\cap\{x\in\R\colon P_\al(X;x)<1\}$, which is the same as $(x_\al,x_*)$, by part \eqref{it:J_al} of Proposition~\ref{prop:P,+}. 
The conclusion in part \eqref{it:str decr} of Proposition~\ref{prop:P,+} for $\al\in(0,\infty)$ now follows by its part~\eqref{it:cont}. 
The case $\al=\infty$ is quite similar, where one uses, instead of \eqref{eq:^-1/al}, the function \eqref{eq:-ln}, whose limit at $-\infty$ is $0$.  
\end{proof}

%

%
%

\begin{proof}[Proof of Proposition~\ref{prop:P cont in al}]
Let $\al$ and a sequence $(\al_n)$ be indeed as in Proposition~\ref{prop:P cont in al}. 
If $x\in[x_*,-\infty)$ then the desired conclusion $P_{\al_n}(X;x)\to P_\al(X;x)$ follows immediately from part~\eqref{it:x ge x_*} of Proposition~\ref{prop:P,+}. 
Therefore, assume in the rest of the proof of Proposition~\ref{prop:P cont in al} that 
\begin{equation}\label{eq:x<x_*}
	x\in(-\infty,x_*). 
\end{equation}
Then \eqref{eq:P,tA,lamax} takes place and, 
by \eqref{eq:lamax}, $\la_{\max,\al}$ is continuous in $\al\in(0,\infty]$. 
So, 
\begin{equation}\label{eq:la_*}
\la^*:=\sup_n\la_{\max,\al_n}\in[0,\infty) 	
\end{equation}
and 
\begin{equation}\label{eq:P,tA,la_*}
P_\ga(X;x)=\inf_{\la\in[0,\la_*]}A_\ga(X;x)(\la)\quad\text{for all}\quad
\ga\in\{\al\}\cup\{\al_n\colon n\in\N\}. 
\end{equation} 
Also, by \eqref{eq:A new}, \eqref{eq:f incr,cont}, the inequality \eqref{eq:domin} for $\al\in(0,\infty)$, the condition $X\in\XX_\be$, 
and dominated convergence, 
\begin{equation}\label{eq:A converg}
	A_{\al_n}(X;x)(\la)\to A_\al(X;x)(\la).   
\end{equation}
Hence, by \eqref{eq:P new}, $\limsup_n P_{\al_n}(X;x)\le\limsup_n A_{\al_n}(X;x)(\la)=A_\al(X;x)(\la)$ for all $\la\in[0,\infty)$, whence, again by \eqref{eq:P new}, 
\begin{equation}\label{eq:limsup,al}
	\limsup_n P_{\al_n}(X;x)\le P_\al(X;x). 
\end{equation}
So, the case $\al=0$ of Proposition~\ref{prop:P cont in al} follows by \eqref{eq:0<al}. 

If $\al\in(0,1]$ then for any $\ka$ and $\la$ such that $0\le\ka<\la<\infty$ one has 
\begin{multline}\label{eq:equi}
	|A_\al(X;x)(\la)-A_\al(X;x)(\ka)| 
\le(\la-\ka)^\al \E(X-x)_+^\al/\al^\al
	+(\la-\ka)^{\al/2}/\al^\al
	+	\P\big((x-X)_+>\tfrac1{\sqrt{\la-\ka}}\big); 
\end{multline}
this follows because  
\begin{align*}
	0\le(1+\la u/\al)_+^\al-(1+\ka u/\al)_+^\al
	&\le(\la-\ka)^\al u^\al/\al^\al\quad\text{if}\quad u\ge0, \\ 
	0\le(1+\ka u/\al)_+^\al-(1+\la u/\al)_+^\al
	&\le\min\big(1,(\la-\ka)^\al|u|^\al/\al^\al\big) \\ 	
	&\le(\la-\ka)^{\al/2}/\al^\al+\ii{|u|>\tfrac1{\sqrt{\la-\ka}}}\quad\text{if}\quad u<0.  
\end{align*}

If now $\al\in(0,1)$ 
then \big(say, by cutting off an initial segment of the sequence $(\al_n)$\big) one may assume that $\be\in(0,1)$, and then, by \eqref{eq:equi} with $\al_n$ in place of $\al$, the sequence $\big(A_{\al_n}(X;x)(\la)\big)$ is equicontinuous in $\la\in[0,\infty)$, uniformly in $n$. 
Therefore, by 
\eqref{eq:la_*} and the Arzel\`a--Ascoli theorem, the convergence in \eqref{eq:A converg} is uniform in $\la\in[0,\la^*]$ and hence the conclusion $P_{\al_n}(X;x)\to P_\al(X;x)$ follows by \eqref{eq:P,tA,la_*} -- in the case when $\al\in(0,1)$. 

Quite similarly, the same conclusion holds if $\al=1=\be$; 
that is, $P_\al(X;x)$ is left-continuous in $\al$ at the point $\al=1$ provided that $\E X_+<\infty$. 

It remains to consider the case when $\al\in[1,\infty]$ and 
$\al_n\ge1$ for all $n$. Then, by the definition in \eqref{eq:f}, the functions $h_\al$ and $h_{\al_n}$ are convex and hence, by \eqref{eq:A new}, $A_\al(X;x)(\la)$ and $A_{\al_n}(X;x)(\la)$ are convex in $\la\in[0,\infty)$. 
Then 
the conclusion $P_{\al_n}(X;x)\to P_\al(X;x)$ follows by \cite[Corollary 3]{inf-stable}, 
the condition $X\in\XX_\be$, \eqref{eq:P,tA,la_*}, and \eqref{eq:la_*}. 
\end{proof}

\begin{proof}[Proof of Proposition~\ref{prop:P cont in X}] 
This is somewhat similar to the proof of Proposition~\ref{prop:P cont in al}. 
One difference here is the use of the uniform integrability condition, which, in view of \eqref{eq:A new}, \eqref{eq:domin}, and 
the condition $X\in\XX_\al$, implies (see e.g.\ \cite[Theorem~5.4]{billingsley}) that for all $\la\in[0,\infty)$ 
\begin{equation}\label{eq:A->A}
	\lim_{n\to\infty}A_\al(X_n;x)(\la)=A_\al(X;x)(\la);   	
\end{equation}
here, in the case when $\al=\infty$ and $\la\notin\La_X$, one should also use  
the Fatou lemma for the convergence in distribution \cite[Theorem~5.3]{billingsley}, according to which one always has $\liminf_{n\to\infty}A_\al(X_n;x)(\la)\ge A_\al(X;x)(\la)$, even without the uniform integrability condition. 
In this entire proof, it is indeed assumed that $\al\in(0,\infty]$. 

It follows from \eqref{eq:A->A} and the nonnegativity of $P_\al(\cdot;\cdot)$ that 
\begin{equation}\label{eq:limsup P}
	0\le\liminf_{n\to\infty}P_\al(X_n;x)\le\limsup_{n\to\infty}P_\al(X_n;x)\le P_\al(X;x)
\end{equation}
for all real $x$; cf.\ \eqref{eq:A converg} and \eqref{eq:limsup,al}. 

The convergence \eqref{eq:P, n->infty} for $x\in(x_*,\infty)$ follows immediately from \eqref{eq:limsup P} and part~\eqref{it:x ge x_*} of Proposition~\ref{prop:P,+}. 

Using the same ingredients, it is easy to check part~\eqref{it:P,x_*} of Proposition~\ref{prop:P cont in X} as well. Indeed, 
assuming that $\P(X_n=x_*)\underset{n\to\infty}\longrightarrow\P(X=x_*)$ and using also  
\eqref{eq:0<al}, 
one has 
\begin{multline*}
	\P(X=x_*)=\liminf_{n\to\infty}\P(X_n=x_*)\le\liminf_{n\to\infty}\P(X_n\ge x_*)
	\le\liminf_{n\to\infty}P_\al(X_n;x_*) \\ 
	\le\limsup_{n\to\infty}P_\al(X_n;x_*)
	\le P_\al(X;x_*)=\P(X=x_*), 
\end{multline*}
which yields \eqref{eq:P, n->infty} for $x=x_*$. 
Also, $X_n\D{n\to\infty} X$ implies $\limsup_{n\to\infty}\P(X_n=x_*)\le\P(X=x_*)$; see e.g.\ \cite[Theorem~2.1]{billingsley}.
So, if $\P(X=x_*)=0$, then $\P(X_n=x_*)\to\P(X=x_*)$ and hence \eqref{eq:P, n->infty} holds for $x=x_*$, by the first sentence of part~\eqref{it:P,x_*} of Proposition~\ref{prop:P cont in X}. 

It remains to prove part~\eqref{it:P, n->infty} of Proposition~\ref{prop:P cont in X} assuming \eqref{eq:x<x_*}. 
The reasoning here is quite similar to the corresponding reasoning in the proof of Proposition~\ref{prop:P cont in al}, starting with \eqref{eq:x<x_*}. 
Here, instead of the continuity of $\la_{\max,\al}=\la_{\max,\al,X}$ in $\al$, one should use the convergence $\la_{\max,\al,X_n}\to\la_{\max,\al,X}$, which holds provided that $y\in(x,x_*)$ is chosen to be such that $\P(X=y)=0$. 
Concerning the use of inequality \eqref{eq:equi}, note that (i) the uniform integrability condition implies that $\E(X_n-x)_+^\al$ is bounded in $n$ and (ii) the convergence in distribution $X_n\D{n\to\infty} X$ implies that $\sup_n\P\big((x-X_n)_+>\tfrac1{\sqrt{\la-\ka}}\big)\longrightarrow0$ as $0<\la-\ka\to0$. 
Proposition~\ref{prop:P cont in X} is now completely proved. 
\end{proof}


\begin{proof}[Proof of Theorem~\ref{th:P}]
The \textbf{model-independence} 
is obvious from the definition \eqref{eq:P new}. 
The \textbf{monotonicity in $X$} follows immediately from \eqref{eq:alle}, \eqref{eq:g_al,t}, and \eqref{eq:P old}--\eqref{eq:A old}.  
The \textbf{monotonicity in $\al$} was already given in \eqref{eq:mono in al}. 
The \textbf{monotonicity in $x$} is part (i) of Proposition~\ref{prop:P}. 
That $P_\al(X;x)$ takes on only \textbf{values} in the interval $[0,1]$ follows immediately from \eqref{eq:P decr in x}. 
The \textbf{$\al$-concavity in $x$} and \textbf{stability in $x$} follow immediately from parts~\eqref{it:conc} and \eqref{it:x ge x_*} of Proposition~\ref{prop:P,+}. 
The \textbf{stability in $\al$} and the \textbf{stability in $X$} are Propositions~\ref{prop:P cont in al} and \ref{prop:P cont in X}, respectively. 
The \textbf{translation invariance}, \textbf{consistency}, and \textbf{positive homogeneity} follow immediately from the definition \eqref{eq:P new}. 
\end{proof}

\begin{proof}[Proof of Proposition~\ref{prop:inverse}] \ 

\eqref{it:Q in R} Part~\eqref{it:Q in R} of this proposition follows immediately from \eqref{eq:Q} 
and \eqref{eq:P decr in x}. 
%

\eqref{it:p le p_*} Suppose here indeed that $p\in(0,p_*]\cap(0,1)$. 
Then for any $x\in(x_*,\infty)$ one has $P_\al(X;x)=0<p$, by part~\eqref{it:x ge x_*} of Proposition~\ref{prop:P,+}, whence, by \eqref{eq:J_al}, $x\in E_\al(p)$. 
On the other hand, for any $x\in(-\infty,x_*]$ one has $P_\al(X;x)\ge P_\al(X;x_*)=p_*\ge p$, by part~(i) of Proposition~\ref{prop:P} and part~\eqref{it:x ge x_*} of Proposition~\ref{prop:P,+}, whence $x\notin E_\al(p)$. 
So, 
$E_\al(p)=(x_*,\infty)$, and the conclusion $Q_\al(X;p)=x_*$ now follows by the definition of $Q_\al(X;p)$ in \eqref{eq:Q}. 

\eqref{it:Q le x_*} 
If $x_*=\infty$ then the inequality $Q_\al(X;p)\le x_*$ in part~\eqref{it:Q le x_*} of Proposition~\ref{prop:inverse} is trivial. If $x_*<\infty$ and $p\in(p_*,1)$, then $x_*\in E_\al(p)$ and hence $Q_\al(X;p)\le x_*$ by \eqref{eq:Q}. 
%
Now part~\eqref{it:Q le x_*} of Proposition~\ref{prop:inverse} follows from its part~\eqref{it:p le p_*}. 

\eqref{it:Q to x_*} Take any $x\in(-\infty,x_*)$. Then $P_0(X;x)=\P(X\ge x)>0$. 
Moreover, for all $p\in(0,P_0(X;x))$ one has $x\notin E_{0,X}(p)$. 
Therefore and because the set $E_{0,X}(p)$ is an interval with endpoints $Q_0(X;p)$ and $\infty$, it follows that $x\le Q_0(X;p)$. 
Thus, for any given $x\in(-\infty,x_*)$ and for all small enough $p>0$ one has $Q_0(X;p)\ge x$ and hence, by the already established part~\eqref{it:Q le x_*} of Proposition~\ref{prop:inverse}, $Q_0(X;p)\in[x,x_*]$. This means that part~\eqref{it:Q to x_*} of Proposition~\ref{prop:inverse} is proved for $\al=0$. To complete the proof of this part, it remains to refer to 
the monotonicity of $Q_\al(X;p)$ in $\al$ stated in \eqref{eq:Q incr} and, again, 
to part~\eqref{it:Q le x_*} of Proposition~\ref{prop:inverse}. 

\eqref{it:p>p_*} 
Assume indeed that $\al\in(0,\infty]$. By part~\eqref{it:x_al<x_*} of Proposition~\ref{prop:P,+}, the case $p_*=1$ is equivalent to $x_\al=x_*$, and in that case both mappings \eqref{eq:inv} and \eqref{eq:dir} are empty, so that part~\eqref{it:p>p_*} of Proposition~\ref{prop:inverse} is trivial. 
So, assume that $p_*<1$ and, equivalently, $x_\al<x_*$. 
The function $(x_\al,x_*)\ni x\mapsto P_\al(X;x)$ 
is continuous and strictly decreasing, by parts~\eqref{it:cont} and \eqref{it:str decr} of Proposition~\ref{prop:P,+}.  
At that, $P_\al(X;x_*-)=
P_\al(X;x_*)=p_*$ by parts~\eqref{it:cont} and \eqref{it:x ge x_*} of Proposition~\ref{prop:P,+} if $x_*<\infty$, and $P_\al(X;x_*-)=
0=p_*$ by 
\eqref{eq:P decr in x} and \eqref{eq:x_*,p_*} 
if $x_*=\infty$. 
Also, $P_\al(X;x_\al+)=
P_\al(X;x_\al)=1$ by 
the condition $x_\al<x_*$ and 
parts~\eqref{it:cont} and \eqref{it:P_al,x<x_al} of Proposition~\ref{prop:P,+} if $x_\al>-\infty$, and $P_\al(X;x_\al+)=
1$ by 
\eqref{eq:P decr in x} if $x_\al=-\infty$. 
Therefore, the continuous and strictly decreasing function $(x_\al,x_*)\ni x\mapsto P_\al(X;x)$ maps $(x_\al,x_*)$ onto $(p_*,1)$, and so, formula \eqref{eq:dir} is correct, and there is a unique inverse function, say $(p_*,1)\ni p\mapsto x_{\al,p}\in(x_\al,x_*)$, to the function \eqref{eq:dir}; moreover, this inverse function is continuous and strictly decreasing. 
It remains to show that $Q_\al(X;p)=x_{\al,p}$ for all $p\in(p_*,1)$. 
Take indeed any $p\in(p_*,1)$. 
Since the function $(p_*,1)\ni p\mapsto x_{\al,p}\in(x_\al,x_*)$ is inverse to \eqref{eq:dir} and strictly decreasing, $P_\al(X;x_{\al,p})=p$, $P_\al(X;x)>p$ for $x\in(x_\al,x_{\al,p})$, and $P_\al(X;x)<p$ for $x\in(x_{\al,p},x_*)$. 
So, by part~(i) of Proposition~\ref{prop:P}, $P_\al(X;x)>p$ for $x\in(-\infty,x_{\al,p})$ and $P_\al(X;x)<p$ for $x\in(x_{\al,p},\infty)$. 
Now the conclusion that $Q_\al(X;p)=x_{\al,p}$ for all $p\in(p_*,1)$ follows by \eqref{eq:Q}. 

\eqref{it:y} Assume indeed that $\al\in(0,\infty]$ and take indeed any $y\in\big(-\infty,Q_\al(X;p)\big)$. 
If $P_\al(X;y)=1$ then the conclusion $P_\al(X;y)>p$ in part~\eqref{it:y} of Proposition~\ref{prop:inverse} is trivial, in view of \eqref{eq:p}. 
So, w.l.o.g.\ $P_\al(X;y)<1$ and hence $y\in E_\al(1)=(x_\al,\infty)$, by \eqref{eq:J_al} 
and part~\eqref{it:J_al} of Proposition~\ref{prop:P,+}. 
Let now $y_p:=Q_\al(X;p)$ for brevity, so that $y\in(-\infty,y_p)$ and, by the already verified part~\eqref{it:Q le x_*} of Proposition~\ref{prop:inverse}, $y_p\le x_*$. 
Therefore, 
$x_\al<y<y_p\le x_*$. 
So, by part~\eqref{it:p>p_*} of Proposition~\ref{prop:inverse} and parts~\eqref{it:cont} and \eqref{it:x ge x_*} of Proposition~\ref{prop:P,+}, 
\begin{equation}\label{eq:P>}
P_\al(X;y)>\lim_{x\uparrow y_p}P_\al(X;x)=P_\al(X;y_p)\ge P_\al(X;x_*)=p_*,  	
\end{equation}
which yields the conclusion $P_\al(X;y)>p$ in the case when $p\le p_*$. 
If now $p>p_*$ then $p\in(p_*,1)$ and, by part~\eqref{it:p>p_*} of Proposition~\ref{prop:inverse}, $y_p=Q_\al(X;p)\in(x_\al,x_*)$ and $P_\al(X;y_p)=p$, so that the conclusion $P_\al(X;y)>p$ follows by \eqref{eq:P>} in this case as well. 

\eqref{it:>EX} Part~\eqref{it:>EX} of Proposition~\ref{prop:inverse} follows immediately from \eqref{eq:inv}, \eqref{eq:Q decr in p}, and part \eqref{it:EX} of Proposition~\ref{prop:P,+}. 
\end{proof}

\begin{proof}[Proof of Theorem~\ref{th:coher}]
The \textbf{model-independence}, \textbf{monotonicity in $X$}, \textbf{monotonicity in $\al$}, \textbf{translation invariance}, \textbf{consistency}, and \textbf{positive homogeneity} properties of $Q_\al(X;p)$ 
follow immediately from \eqref{eq:Q} and the corresponding properties of 
$P_\al(X;x)$ stated in Theorem~\ref{th:P}. 

Concerning the \textbf{monotonicity of $Q_\al(X;p)$ in $p$}: that $Q_\al(X;p)$ is nondecreasing in $p\in(0,1)$ follows immediately from \eqref{eq:Q_0} for $\al=0$ and from 
\eqref{eq:Q=inf} and \eqref{eq:B} for $\al\in(0,\infty]$. 
That $Q_\al(X;p)$ is strictly decreasing in $p\in[p_*,1)\cap(0,1)$ if $\al\in(0,\infty]$ follows immediately from part~\eqref{it:p>p_*} of 
Proposition~\ref{prop:inverse} and the verified below statement on the stability in $p$:  $Q_\al(X;p)$ is continuous in $p\in(0,1)$ if $\al\in(0,\infty]$. 

The \textbf{monotonicity of $Q_\al(X;p)$ in $\al$} follows immediately from \eqref{eq:mono in al} and \eqref{eq:Q}. 

The \textbf{finiteness of $Q_\al(X;p)$} was already stated in part~\eqref{it:Q in R} of Proposition~\ref{prop:inverse}. 

The \textbf{concavity of $Q_\al(X;p)$ in $p^{-1/\al}$} in the case when $\al\in(0,\infty)$ follows by \eqref{eq:Q=inf}, since $B_\al(X;p)(t)$ is affine (and hence concave) in $p^{-1/\al}$. 
Similarly, the \textbf{concavity of $Q_\infty(X;p)$ in $\ln\frac1p$} follows by \eqref{eq:Q=inf}, since $B_\infty(X;p)(t)$ is affine in $\ln\frac1p$. 

The \textbf{stability 
of $Q_\al(X;p)$ in $p$} 
can be deduced from Proposition~\ref{prop:inverse}. 
Alternatively, the same follows from 
the already established finiteness and concavity of $Q_\al(X;p)$ in $p^{-1/\al}$ or $\ln\frac1p$ (cf.\ the proof of \cite[Proposition~13]{rocka-ur02}), because any finite concave function on an open interval of the real line is continuous, whereas the mappings $(0,1)\ni p\mapsto p^{-1/\al}\in(0,\infty)$ and $(0,1)\ni p\mapsto\ln\frac1p\in(0,\infty)$ are 
homeomorphisms. 

Concerning the \textbf{stability 
of $Q_\al(X;p)$ in $X$}, take any real $x\ne x_*$. Then the convergence $P_\al(X_n;x)\to P_\al(X;x)$ holds, by 
Proposition~\ref{prop:P cont in X}. 
So, in view of \eqref{eq:J_al}, if $x\in E_{\al,X}(p)$ then eventually (that is, for all large enough $n$) $x\in E_{\al,X_n}(p)$. Hence, by \eqref{eq:Q}, for each real $x\ne x_*$ such that $x>Q_\al(X;p)$ eventually one has $x\ge Q_\al(X_n;p)$. It follows that 
$
	\limsup_n Q_\al(X_n;p)\le Q_\al(X;p). 
$ 
On the other hand, by part~\eqref{it:y} of Proposition~\ref{prop:inverse}, for any $y\in\big(-\infty,Q_\al(X;p)\big)$ one has $P_\al(X;y)>p$ and hence eventually $P_\al(X_n;y)>p$, which yields $y\notin E_{\al,X_n}(p)$ and hence $y\le Q_\al(X_n;p)$.  
It follows that $\liminf_n Q_\al(X_n;p)\ge Q_\al(X;p)$. Recalling now the established inequality $\limsup_n Q_\al(X_n;p)\le Q_\al(X;p)$, one completes the verification of the stability 
of $Q_\al(X;p)$ in $X$. 

The \textbf{stability 
of $Q_\al(X;p)$ in $\al$} is proved quite similarly, only using Proposition~\ref{prop:P cont in al} in place of Proposition~\ref{prop:P cont in X}. 
Here the stipulation $x\ne x_*$ is not needed. 

%

Consider now the \textbf{sensitivity} property. 
First, suppose that $\al\in(0,1)$. 
Then, for all real $t<0$, the derivative of $B_\al(X;p)(t)$ in $t$ is less than 
$D:=1-(\E Y^\al)^{-1+1/\al}\E Y^{\al-1}$, where $Y:=(X-t)_+=X-t>0$. 
The inequality $D\le0$ can be rewritten as the true inequality  $\frac\tau{\tau+1}L(-1)+\frac1{\tau+1}L(\tau)\ge L(0)$ for the convex function $s\mapsto L(s):=\ln\E\exp\{(1-\al)s\ln Y\}$, where $\tau:=\frac\al{1-\al}$. 
So, the derivative is negative and hence $B_\al(X;p)(t)$ decreases in $t\le0$ (here, to include $t=0$, we also used the continuity of $B_\al(X;p)(t)$ in $t$, which follows by 
the condition $X\in\XX_\al$ and 
dominated convergence
). On the other hand, if $t>0$ then $B_\al(X;p)(t)\ge t>0$. Also, $B_\al(X;p)(0)>0$ by \eqref{eq:B} if the condition $\P(X>0)>0$ holds. Recalling again the continuity of $B_\al(X;p)(t)$ in $t$, one completes the verification of the sensitivity property -- in the case $\al\in(0,1)$. \\ 
The sensitivity property in the case $\al=1$ follows by \eqref{eq:al=1}. Indeed, \eqref{eq:al=1} yields $Q_1(X;p)\ge Q(X;p)>0$ if $Q(X;p)>0$, and 
$Q_1(X;p)=\tfrac1p\,\E X\ge0$ by the condition $X\ge0$ if $Q(X;p)=0$; 
moreover, one has $\E X>0$ and hence $Q_1(X;p)=\tfrac1p\,\E X>0$ if $Q(X;p)=0$ and $\P(X>0)>0$. 
On the other hand, by \eqref{eq:Q_0}, $X\ge0$ implies $Q(X;p)\ge0$. 
Thus, the sensitivity property in the case $\al=1$ is verified is well. 
This and the already established monotonicity of $Q_\al(X;p)$ in $\al$ implies the sensitivity property whenever $\al\in[1,\infty]$. \\ 
As far as this property is concerned, it remains to verify it when $\al=0$ -- assuming that $\P(X>0)>p$. The sets $E:=\big\{x\in\R\colon\P(X>x)\le p\big\}$ and $E^\circ:=\big\{x\in\R\colon\P(X>x)<p\big\}$ are intervals with the right endpoint $\infty$. 
The condition $\P(X>0)>p$ means that $0\notin E$. 
By the right continuity of $\P(X>x)$ in $x$, 
the set $E$ contains the closure $\overline{E^\circ}$ of the set $E^\circ$. 
So, $0\notin\overline{E^\circ}$ and hence $0<\inf E^\circ=Q_0(X;p)$, by \eqref{eq:Q_0}. 
Thus, the sensitivity property is fully verified.  


In the presence of the positive homogeneity, the \textbf{subadditivity} property is easy to see to be equivalent to the convexity; cf.\ e.g. \cite[Theorem~4.7]{rocka}. 

Therefore, it remains to verify the \textbf{convexity} property. 
Assume indeed that $\al\in[1,\infty]$. 
If at that $\al<\infty$, then the function $\|\cdot\|_\al$ is a norm and hence convex; moreover, this function is nondecreasing on the set of all nonnegative r.v.'s. On the other hand, the function $\R\ni x\mapsto x_+$ is nonnegative and convex. It follows by \eqref{eq:B} that 
$B_\al(X;p)(t)$ is convex in the pair $(X,t)$. 
So, to complete the verification of the convexity property of $Q_\al(X;p)$ in the case $\al\in[1,\infty)$, it remains to refer to the well-known and easily established fact that, if $f(x,y)$ is convex in $(x,y)$, then $\inf_y \,f(x,y)$ is convex in $x$; cf.\ e.g.\ \cite[Theorem~5.7]{rocka}. \\ 
The subadditivity and hence 
convexity of $Q_\al(X;p)$ in $X$ in the remaining case $\al=\infty$ 
can now be obtained 
by the already established stability in $\al$. 
It can also be deduced 
from \cite[Lemma~B.2]{rio.transl} (cf.\ \cite[Lemma~2.1]{rio}) or 
from by the main result in \cite{pin-yt}, in view of the inequality  
$\li{\big(L_{X_1+\dots+X_n}\big)}\le\li{\big(L_{X_1}\fJ\cdots\fJ L_{X_n}\big)}$ 
given in the course of the discussion following 
\cite[Corollary~2.2]{pin-yt} therein. 
However, a direct proof, similar to the one above for $\al\in[1,\infty)$, can be based on the observation that $B_\infty(X;p)(t)$ is convex in the pair $(X,t)$. 
Since $t\,\ln\frac1p$ is obviously linear in $(X,t)$, the convexity of $B_\infty(X;p)(t)$ in $(X,t)$ means precisely that 
for any natural number $n$, any r.v.'s $X_1,\dots,X_n$, any positive real numbers $t_1,\dots,t_n$, and any positive real numbers $\al_1,\dots,\al_n$ with $\sum_i\al_i=1$, one has the inequality 
$
	t\ln\E e^{X/t}\le\sum_i\al_i t_i\ln\E e^{X_i/t_i}
$
, where 
$X:=\sum_i\al_i X_i$ and $t:=\sum_i\al_i t_i$; but the latter inequality can be rewritten as an instance of H\"older's inequality: 
$\E\prod_i Z_i\le\prod_i\|Z_i\|_{p_i}$, where $Z_i:=e^{\al_i X_i/t}$ and $p_i:=t/(\al_i t_i)$ (so that $\sum_i\frac1{p_i}=1$). 
\big(In particular, it follows that $B_\infty(X;p)(t)$ is convex in $t$, which is useful when $Q_\infty(X;p)$ is computed by formula \eqref{eq:Q=inf}.\big) 
 
The proof of Theorem~\ref{th:coher} is now complete. 
\end{proof}

\begin{proof}[Proof of Proposition~\ref{cor:str mono X}] 
%
Consider first the case $\al\in(0,\infty)$. 
Let r.v.'s $X$ and $Y$ be in the default domain of definition, $\XX_\al$, of the functional $Q_\al(\cdot;p)$. 
The condition $X\sst Y$ and the left continuity of the function $\P(X\ge\cdot)$ imply that 
for any $v\in\R$ there are some $u\in(v,\infty)$ and $w\in(v,u)$ such that $\P(X\ge z)<\P(Y\ge z)$ for all $z\in[w,u]$. On the other hand, by the Fubini theorem, $\E(X-t)_+^\al=\int_\R\al(z-t)_+^{\al-1}\P(X\ge z)\dd z$ for all $t\in\R$. 
Recalling also 
that $X$ and $Y$ are in $\XX_\al$, one has $B_\al(X;p)(t)<B_\al(Y;p)(t)$ for all $t\in\R$. 
By Proposition~\ref{prop:att}, $Q_\al(Y;p)=B_\al(Y;p)(t_\opt)$ for some $t_\opt\in\R$. 
So, $Q_\al(X;p)\le B_\al(X;p)(t_\opt)<B_\al(Y;p)(t_\opt)=Q_\al(Y;p)$. 
\big(Note that the proof of Proposition~\ref{prop:att}, given later in this appendix, does not use Proposition~\ref{cor:str mono X} -- so that there is no vicious circle here.\big)

Concerning the case $\al=\infty$, recall \eqref{eq:x_*,p_*} and \eqref{eq:La_X}, and then note that the condition $X\sst Y$ implies that $x_{*,Y}=\infty$, $\La_X\supseteq\La_Y$, and $B_\infty(X;p)(t)<B_\infty(Y;p)(t)$ for all $t\in(0,\infty)$ such that $\frac1t\in\La_X$ and hence for all $t\in(0,\infty)$ such that $\frac1t\in\La_Y$. 
Here, instead of the formula $\E(X-t)_+^\al=\int_\R\al(z-t)_+^{\al-1}\P(X\ge z)\dd z$ for all $t\in\R$, one uses the formula $\E e^{(X-x)/t}=\int_\R \frac1t\,e^{(z-x)/t}\P(X\ge z)\dd z$ for all $t\in(0,\infty)$. 
Using now Proposition~\ref{prop:att}, one sees that $Q_\infty(Y;p)=B_\infty(Y;p)(t_\opt)$ for some $t_\opt\in(0,\infty)$ such that $\frac1t\in\La_Y$. 
So, $Q_\infty(X;p)\le B_\infty(X;p)(t_\opt)<B_\infty(Y;p)(t_\opt)=Q_\infty(Y;p)$. 
%
\end{proof}

\begin{proof}[Proof of Proposition~\ref{prop:al<1}]
Suppose that indeed $\al\in
[0,1)$. 
Let $X$ and $Y$ be independent r.v.'s, each with the Pareto density function given by the formula $f(u)=(1+u)^{-2}\ii{u>0}$, so that $\P(X\ge x)=\P(Y\ge x)=(1+x_+)^{-1}$ for all $x\in\R$. 
Then, by the condition $\al\in
[0,1)$, 
the condition $X\in\XX_\al$ (assumed by default in this paper and, in particular, in Proposition~\ref{cor:str mono X}) holds; this is the only place in the proof of Proposition~\ref{prop:al<1} where the condition $\al<1$ is used. 
Also, then it is not hard to see that for all $x\in(0,\infty)$ one has $\P(X+Y\ge x)-\P(2X\ge x)=2(2 + x)^{-2}\ln(1 + x)>0$ 
and hence, by the definition of the relation $\sst$ given in Proposition~\ref{cor:str mono X},  
\begin{equation*}
	2X\sst X+Y.  
\end{equation*}
Using now Proposition~\ref{cor:str mono X} together with the positive homogeneity property stated in Theorem~\ref{th:coher}, 
one concludes that $Q_\al(X+Y;p)>Q_\al(2X;p)=2Q_\al(X;p)=Q_\al(X;p)+Q_\al(Y;p)$ if $\al\in(0,1)$. 

It remains to consider the case $\al=0$. 
Note that the function $(0,\infty)\ni x\mapsto\P(X+Y\ge x)\in(0,1)$ is decreasing strictly and continuously from $1$ to $0$. 
So, in view of \eqref{eq:Q_0}, the function $(0,1)\ni p\mapsto Q(X+Y;p)\in(0,\infty)$ is the inverse to the function $(0,\infty)\ni x\mapsto\P(X+Y\ge x)\in(0,1)$. 
Similarly, the function $(0,1)\ni p\mapsto Q(2X;p)\in(0,\infty)$ is the inverse to the strictly decreasing continuous function $(0,\infty)\ni x\mapsto\P(2X\ge x)\in(0,1)$. 
Since $\P(X+Y\ge x)>\P(2X\ge x)$ for all $x\in(0,\infty)$, it follows that $Q(X+Y;p)>Q(2X;p)$ and thus the inequality $Q_\al(X+Y;p)>Q_\al(X;p)+Q_\al(Y;p)$ holds for $\al=0$ as well. 
\end{proof}

\begin{proof}[Proof of Proposition~\ref{prop:Q close}]\ 

(i) The equalities in \eqref{eq:x_*<infty} follow immediately from part \eqref{it:Q to x_*} of Proposition~\ref{prop:inverse}. The condition $x_*\in\R$ in \eqref{eq:x_*<infty} follows from the condition $x_*<\infty$ -- because, by the definition of $x_*$ in \eqref{eq:x_*,p_*}, one always has $x_*\in(-\infty,\infty]$. Thus, part (i) of Proposition~\ref{prop:Q close} is verified. 

(ii) Take any $r\in(\al,\infty]$ and suppose that indeed $\P(X\ge x)$ is like  $x^{-r}$. 
Then, in view of \eqref{like-x-r-eq} and because the function $q_0$ was supposed to be positive on $\R$, one observes that $\P(X\ge x)>0$ for all large enough real $x$. Therefore and because $\P(X\ge x)$ is nondecreasing in $x\in\R$, in fact $\P(X\ge x)>0$ for all real $x$.  
In particular, it now follows that indeed $x_*=\infty$. Moreover, recalling the definition 
\eqref{eq:J_al} of $E_{\al,X}(p)$ and the equality in \eqref{eq:0<al}, one sees that for any real $x$ and all $p$ in the (nonempty) right neighborhood $\big(0,P_0(X;x)\big)=\big(0,\P(X\ge x)\big)$ of $0$, one has $x\notin E_{0,X}(p)$; therefore and because, by the definition \eqref{eq:Q} of $Q_\al(X;p)$, the set $E_{0,X}(p)$ is an interval with endpoints $Q_0(X;p)$ and $\infty$, one concludes that $Q_0(X;p)\ge x$ for all $p\in\big(0,P_0(X;x)\big)$. 
Thus, $Q_\al(X;p)\underset{p\downarrow0}\longrightarrow\infty$ for $\al=0$; that the same limit relation holds for any $\al\in[0,\infty]$ now follows immediately by the monotonicity of $Q_\al(X;p)$ in $\al$, as stated in \eqref{eq:Q incr}. 

To complete the proof of Proposition~\ref{prop:Q close}, it remains to verify \eqref{eq:Q close}. 
First here, 
consider the case $r<\infty$, so that $r\in(\al,\infty)$. 
For brevity, let 
\begin{equation*}
q(x):=\P(X\ge x). 	
\end{equation*}
Then 
\begin{equation}\label{eq:rat asymp}
	\tfrac{q(x)}{q(y)}\sim\big(\tfrac yx\big)^{r+o(1)}\quad\text{as}\quad x,y\to\infty;  
\end{equation}
the latter asymptotic relation is an extension of, and proved quite similarly to, the asymptotic relation (\ref{like-x-r-eq}b). 
Introduce also 
\begin{equation*}
	x_{\al,p}^\pm:=Q_\al(X;p)\pm1. 
\end{equation*} 
Let indeed $p\downarrow0$, as in \eqref{eq:Q close}. 
Then 
\begin{equation}\label{eq:x sim Q}
	x_{\al,p}^\pm\sim Q_\al(X;p)\to\infty. 
\end{equation}
Because the set $E_{\al,X}(p)$ is an interval with endpoints $Q_\al(X;p)$ and $\infty$, one has $x_{\al,p}^+\in E_{\al,X}(p)$ and $x_{\al,p}^-\notin E_{\al,X}(p)$, whence 
\begin{equation}\label{eq:<p<}
P_\al(X;x_{\al,p}^+)<p\le P_\al(X;x_{\al,p}^-). 	
\end{equation}
On the other hand, by \cite{pin98} \big(see Corollary~2.3, duality relation (4), Theorem~4.2, and Remark~4.3 there\big), 
\begin{equation}\label{eq:P sim cq}
	P_\al(X;x)\sim c_{r,\al}\, q(x) \quad\text{as}\quad x\to\infty;  
\end{equation}
note that the condition ``$\P(X\ge x)$ is like  $x^{-r}$'' in part~(ii) of Proposition~\ref{prop:Q close} corresponds to the condition ``$q(x)/q_0(x)\to1$ as $x\to\infty$ for some $q_0$ which is like $x^{-r}$'' in \cite[Remark~4.3]{pin98}, because the notion ``like $x^{-r}$'' is defined in the present paper slightly differently from \cite{pin98}. 
Combining \eqref{eq:<p<} and \eqref{eq:P sim cq}, one has  
\begin{equation}\label{eq:<p<,sim}
c_{r,\al} q(x_{\al,p}^+)\lesssim p\lesssim c_{r,\al} q(x_{\al,p}^-);  	
\end{equation}
here and elsewhere, $a(p)\lesssim b(p)$ or, equivalently, $b(p)\gtrsim a(p)$ means, by definition, that $b(p)\sim a(p)(1+d(p))>0$ for some nonnegative function $d$. 
Also, \eqref{eq:<p<} with $\al=0$ can be written as 
\begin{equation*}
q(x_{0,p}^+)<p\le q(x_{0,p}^-).   	
\end{equation*} 
Comparing this with \eqref{eq:<p<,sim} and recalling \eqref{eq:rat asymp}, one sees that 
\begin{equation*}
	c_{r,\al}\lesssim\frac{q(x_{0,p}^-)}{q(x_{\al,p}^+)}
	\sim\Big(\frac{x_{\al,p}^+}{x_{0,p}^-}\Big)^{r+o(1)}.  
\end{equation*}
Therefore and because of \eqref{eq:x sim Q},  
\begin{equation*}
	\frac{Q_\al(X;p)}{Q_0(X;p)}\sim\frac{x_{\al,p}^+}{x_{0,p}^-}\gtrsim c_{r,\al}^{1/r}=K(r,\al), 
\end{equation*}
so that $\frac{Q_\al(X;p)}{Q_0(X;p)}\gtrsim K(r,\al)$. 
Quite similarly, $\frac{Q_\al(X;p)}{Q_0(X;p)}\lesssim K(r,\al)$, which shows that indeed \eqref{eq:Q close} holds -- in the case $r<\infty$. 

The case $r=\infty$ is similar. The main differences here are that (a) instead of \eqref{eq:rat asymp}, one should now use the asymptotic relation $\tfrac{q(x)}{q(y)}\sim\big(\tfrac yx\big)^\rho$ as $x,y\to\infty$, with some $\rho=\rho(x,y)\to\infty$, and (b) \eqref{eq:P sim cq} holds for $\al=\infty$ with $c_{r,\infty}:=\Ga(\al+1)(e/\al)^\al$.
\end{proof}

\begin{proof}[Proof of Proposition~\ref{prop:two min,P}]
Take indeed any $\al\in(0,1)$ and $p\in(0,1)$. 
Note that there are real numbers $q$, $r$, and $b$ such that 
\begin{equation}
	\begin{gathered}\label{eq:q,r,b}
	q>0,\ r>0,\ q+r<1, \\ 
	0<b<1, \\ 
	q(1-b)^\al+r(1+b)^\al=2^\al r=p. 
	\end{gathered}
\end{equation}
Indeed, if $0<b<1$, $r=\frac p{2^\al}$, and $q=k(b)r$, where $k(b):=\frac{2^\al-(1+b)^\al}{(1-b)^\al}$, then all of the conditions in \eqref{eq:q,r,b} will be satisfied, possibly except the condition $q+r<1$, which latter will be then equivalent to the condition $h(b):=\frac p{2^\al}\,(1+k(b))<1$. However, this condition can be satisfied by letting $b\in(0,1)$ be small enough -- because $h(0+)= 
p\in(0,1)$. 

If now $q$, $r$, and $b$ satisfy \eqref{eq:q,r,b}, then there is a r.v.\ $X$ taking values $-1$, $-b$, and $b$ with probabilities $1-q-r$, $q$, and $r$, respectively. Let indeed $X$ be such a r.v. Then for all $s\in(0,\infty)$ 
\begin{equation}\label{eq:A=g}
	A_\al(X;0)(\al s)=g(s):=(1-q-r)(1-s)_+^\al+q(1-bs)_+^\al+r(1+bs)^\al. 
\end{equation}
In view of \eqref{eq:A=g} and \eqref{eq:q,r,b}, 
\begin{equation*}
	g(0+)=1>p=g(\tfrac1b)=g(1)<\infty=g(\infty-). 
\end{equation*}
Moreover, by the condition $\al\in(0,1)$, the function $g$ is strictly concave on each of the intervals $(0,1]$, $[1,\tfrac1b]$, and $[\tfrac1b,\infty)$. 
So, the minimum of $g(s)$ in $s\in(0,\infty)$ equals $p$ and is attained precisely at two distinct positive values of $s$. 
Thus, in the case $x=0$, Proposition~\ref{prop:two min,P} follows by \eqref{eq:A=g}. 
The case of a general $x\in\R$ immediately reduces to that of $x=0$ by using the shifted r.v.\ $X+x$ in place of $X$.  
\end{proof}

\begin{proof}[Proof of Proposition~\ref{prop:att}]
Consider first part (i) of the proposition. 
For any real $t>t_{\max}$ one has 
$B_\al(X;p)(t)\ge t>B_\al(X;p)(s)\ge\inf_{t\in\R}B_\al(X;p)(t)$. 
On the other hand, by \eqref{eq:t_0,t_1}, 
for all real $t\le t_0:=t_{0,\min}$ one has $\|(X-t)_+\|_\al^\al\ge\E(X-t)^\al\ii{X\ge t_0}
\ge(t_0-t)^\al\P(X\ge t_0)\ge(t_0-t)^\al\tp$, whence 
$B_\al(X;p)(t)\ge t+(t_0-t)(\tp/p)^{1/\al}>t_{\max}=B_\al(X;p)(s)\ge\inf_{t\in\R}B_\al(X;p)(t)$ provided that also $t<t_{1,\min}$. 
Thus, $B_\al(X;p)(t)>\inf_{t\in\R}B_\al(X;p)(t)$ if either $t>t_{\max}$ or $t<t_{0,\min}\wedge t_{1,\min}=t_{\min}$. 
This, together with the continuity of $B_\al(X;p)(t)$ in $t$, 
completes the proof of part (i) of Proposition~\ref{prop:att}. 

Concerning part (ii) of the proposition, consider first 
 
\emph{Case 1: $x_*=\infty$.} Take then any real $t_1>0$ such that $\E e^{X/t_1}<\infty$ and then any real $x>x_1:=B_\infty(X;p)(t_1)$ such that 
$q:=\P(X\ge x)<p$; note that $q>0$, since $x_*=\infty$. 
Then for any real $t>0$ one has $\E e^{X/t}\ge qe^{x/t}$ and hence 
\begin{equation}\label{eq:cs1}
	B_\infty(X;p)(t)=t\,\ln\frac{\E e^{X/t}}p
	\ge t\,\ln\frac{qe^{x/t}}p
	=x-t\ln\frac pq>x_1=B_\infty(X;p)(t_1)\ge \inf_{t>0}B_\al(X;p)(t)
\end{equation}
provided that 
\begin{equation*}
	t<t_{\min}:=\frac{x-x_1}{\ln(p/q)};
\end{equation*}
the latter inequality is in fact equivalent to the strict inequality in \eqref{eq:cs1}; recall here also that $x>x_1$ and $0<q<p$, whence $t_{\min}\in(0,\infty)$. 
Taking now into account that $B_\infty(X;p)(t)$ is lower semi-continuous in $t$ (by Fatou's lemma) and $B_\infty(X;p)(t)=t\,\ln\frac{\E e^{X/t}}p\sim t\,\ln\frac1p\to\infty$ as $t\to\infty$, one concludes that  
\begin{equation*}
	\inf_{t>0}B_\infty(X;p)(t)=\inf_{t\ge t_{\min}}B_\infty(X;p)(t)=\min_{t\ge t_{\min}}B_\infty(X;p)(t),  
\end{equation*}
which completes the consideration of Case 1 for part (ii) of the proposition. 
It remains to consider 
 
\emph{Case 2: $x_*<\infty$.} Note that $B_\infty(\cdot;p)(t)$ is translation invariant in the sense that 
$B_\infty(X+c;p)(t)=B_\infty(X;p)(t)+c$ for all $c\in\R$ and $t\in(0,\infty)$.  
Therefore, 
%
without loss of generality $x_*=0$, so that $X\le0$ a.s.\ and $\P(X\ge-\vp)>0$ for all real $\vp>0$. 
Now, by dominated convergence, 
$
\E e^{X/t}\underset{t\downarrow0}\longrightarrow\P(X=0)=p_*     	
$ 
and $\E e^{X/t}\underset{t\to\infty}\longrightarrow1$, 
whence 
\begin{equation}\label{eq:ln->}
	\ln\frac{\E e^{X/t}}p\longrightarrow
	\left\{
	\begin{aligned}
	\ln\frac{p_*}p&\text{ as }t\downarrow0, \\
	\ln\frac1p&\text{ as }t\to\infty.    	
	\end{aligned}
	\right.
\end{equation}
Moreover,  
\begin{equation}\label{eq:B->}
	B_\infty(X;p)(t)=t\,\ln\frac{\E e^{X/t}}p\longrightarrow
	\left\{
	\begin{aligned}
	0&\text{ as }t\downarrow0, \\
	\infty&\text{ as }t\to\infty.  	
	\end{aligned}
	\right.
\end{equation}
Indeed, if $p_*=0$ then 
for each real $\vp>0$ and all small enough real $t>0$, one has $\E e^{X/t}<p$ and hence 
$0>t\,\ln\frac{\E e^{X/t}}p
\ge t\,\ln\big(\frac1p\,\E e^{X/t}\ii{X\ge-\vp}\big)
\ge-\vp+t\ln\P(X\ge-\vp)\underset{t\downarrow0}\longrightarrow-\vp$, which yields \eqref{eq:B->} for $t\downarrow0$, in the case when $p_*=0$. As for the cases when $t\to\infty$, or $t\downarrow0$ and $p_*>0$, then \eqref{eq:B->} follows from \eqref{eq:ln->} because $0<p<1$. 

To proceed further with the consideration of Case~2, one needs to distinguish the following three subcases. 

\emph{Subcase 2.1: $p_*\in[0,p)$.} Then, by \eqref{eq:B->}, for all large enough real $t>0$ 
\begin{equation*}
	B_\infty(X;p)(t)>0=\lim_{t\downarrow0}B_\infty(X;p)(t)\ge\inf_{t>0}B_\infty(X;p)(t)
\end{equation*}
and, by \eqref{eq:B->} and \eqref{eq:ln->}, for all small enough real $s>0$ 
\begin{equation*}
	\lim_{t\downarrow0}B_\infty(X;p)(t)=0	
	>s\,\ln\frac{\E e^{X/s}}p
	=B_\infty(X;p)(s)
	\ge\inf_{t>0}B_\infty(X;p)(t). 
\end{equation*}
It follows that for some positive real $t_{\min}$ and $t_{\max}$
\begin{equation*}
	\inf_{t>0}B_\infty(X;p)(t)=\inf_{t_{\min}\le t\le t_{\max}}B_\infty(X;p)(t)
	=\min_{t_{\min}\le t\le t_{\max}}B_\infty(X;p)(t);  
\end{equation*}
the latter equality here follows by the continuity of $B_\infty(X;p)(t)$ in $t\in(0,\infty)$, which in turn takes place by the Case~2 condition $x_*<\infty$. 
This completes the consideration of Subcase 2.1 for part (ii) of the proposition. 

\emph{Subcase 2.2: $p_*\in[p,1)$.} Here, note that $\P(X<0)>0$ (since $p_*<1$) and 
$\E e^{X/t}=p_*+\E e^{X/t}\ii{X<0}$. 
So, if $t$ is decreasing from $\infty$ to $0$, then $\E e^{X/t}$ is strictly decreasing and hence $\ln\frac{\E e^{X/t}}p$ is strictly decreasing -- to $\ln\frac{p_*}p\ge0$, by \eqref{eq:ln->} and the case condition $p_*\in[p,1)$. Therefore, $\ln\frac{\E e^{X/t}}p>0$ for all $t>0$ and hence $B_\infty(X;p)(t)=t\ln\frac{\E e^{X/t}}p$ is strictly decreasing if $t$ is decreasing from $\infty$ to $0$. 
It follows that, in Subcase 2.2, 
$\inf_{t\in T_\al}=\inf_{t\in(0,\infty)}$ in \eqref{eq:Q=inf} is not attained; rather, 
$
	\inf_{t>0}B_\infty(X;p)(t)=\lim_{t\downarrow0}B_\infty(X;p)(t)=0=x_* 
$, 
in view of \eqref{eq:B->} and the assumption $x_*=0$. 
It remains to consider 

\emph{Subcase 2.3: $p_*=1$.} Then $\P(X=0)=1$ and hence $B_\infty(X;p)(t)=t\ln\frac1p$, so that, as in Subcase 2.2, $\inf_{t\in T_\al}=\inf_{t\in(0,\infty)}$ in \eqref{eq:Q=inf} is not attained, and  
$
	\inf_{t>0}B_\infty(X;p)(t)=\lim_{t\downarrow0}B_\infty(X;p)(t)=0=x_* 
$. 

Now Proposition~\ref{prop:att} is completely proved. 
\end{proof}

\begin{proof}[Proof of Proposition~\ref{prop:B str conv}]\ 
 
\eqref{B conv} Part~\eqref{B conv} of Proposition~\ref{prop:B str conv} follows because, as shown in the proof of Theorem~\ref{th:coher}, $B_\al(X;p)(t)$ is convex in the pair $(X,t)$. 

\eqref{B str conv} Assume indeed that $\al\in(1,\infty)$. It is then well known that the norm $\|\cdot\|_\al$ is strictly convex, in the sense that $\|(1-s)Y+sZ\|_\al<(1-s)\|Y\|_\al+s\|Z\|_\al$ for all $s\in(0,1)$ and all r.v.'s $Y$ and $Z$ such that $\|Y\|_\al+\|Z\|_\al<\infty$ and $\P(yY+zZ\ne0)>0$ for all nonzero real $y$ and $z$. 
The strict convexity of the norm $\|\cdot\|_\al$ is of course equivalent to its strict subadditivity -- see e.g.\ 
\cite[Corollary, page~405]{clarkson}. 
Alternatively, the strict subadditivity of the norm $\|\cdot\|_\al$ can be easily discerned from a proof of Minkowski's inequality, say the classical proof based on H\"older's inequality, or the one given in \cite
{pin-yt}. 
Since for any $t\in(-\infty,x_{**})$ the set $\supp\big((X-t)_+\big)$ contains at least two distinct points, it follows that $B_\al(X;p)(t)$ is strictly convex in $t\in(-\infty,x_{**})$ and hence, by continuity, in $t\in(-\infty,x_{**}]\cap\R$. 

\eqref{B str conv,infty} Part~\eqref{B str conv,infty} of Proposition~\ref{prop:B str conv} can be verified by invoking, in the proof of the subadditivity/convexity in Theorem~\ref{th:coher}, the well-known strictness condition for H\"older's inequality. 
\end{proof}

\begin{proof}[Proof of Proposition~\ref{prop:ql}]\ 

\eqref{ql,p<p*} In the case $\al=1$, part~\eqref{ql,p<p*} of Proposition~\ref{prop:ql} follows immediately from \eqref{eq:qlz=Q0} and part~\eqref{it:p le p_*} of Proposition~\ref{prop:inverse}. 
So, assume that $\al\in(1,\infty)$. 
Also, indeed assume that $p\in(0,p_*]\cap(0,1)$. Then necessarily $p_*>0$, $x_*<\infty$, and, again by part~\eqref{it:p le p_*} of Proposition~\ref{prop:inverse}, $Q_\al(X;p)=x_*$. 
On the other hand, $B_\al(X;p)(x_*)=x_*$ by \eqref{eq:B}. So, by \eqref{eq:amin}, $x_*\in\amin$. Also, again by \eqref{eq:B}, $B_\al(X;p)(t)=t>x_*=Q_\al(X;p)$ for all $t\in(x_*,\infty)$. Thus, by \eqref{eq:ql}, indeed $\ql=x_*$. 

\eqref{ql,p>p*} Let us now verify part~\eqref{ql,p>p*} of Proposition~\ref{prop:ql}. 
Toward that end, assume indeed that $\al\in(1,\infty)$ and $p\in(p_*,1)$. Then, by \eqref{eq:inv}, $Q_\al(X;p)\in(-\infty,x_*)$. By \eqref{eq:B}, $t\le B_\al(X;p)(t)$ for all real $t$. 
So, in view of \eqref{eq:ql} and \eqref{eq:amin}, $\ql\le Q_\al(X;p)<x_*$. 
By \eqref{eq:ql in R}, one now has 
$\ql\in(-\infty,x_*)$. 
Since $\al\in(1,\infty)$, $B(t):=B_\al(X;p)(t)$ is differentiable in $t\in(-\infty,x_*)$, 
with the derivative
\begin{equation}\label{eq:B'}
B'(t)=1-\Big(\frac{\pl\al}p\Big)^{1/\al}\quad\text{for}\quad t\in(-\infty,x_*). 
\end{equation}
It follows by \eqref{eq:ql} that $\ql$ is a root $t\in(-\infty,x_*)$ of the equation $B'(t)=0$, which can be rewritten as $\pl\al=p$. 
Let us show that such a root $t$ is unique and \eqref{eq:ql<x**} holds.

If $x_{**}<x_*$, it follows by \eqref{eq:pl} that $\pl\al=p_*<p$ for all $t\in[x_{**},x_*)$ and hence, by \eqref{eq:B'}, $B'(t)>0$ for all such $t$. 
So, all roots $t\in(-\infty,x_*)$ of the equation $B'(t)=0$ are in fact in the interval $(-\infty,x_{**})$. 
On the other hand, by part~\eqref{B str conv} of Proposition~\ref{prop:B str conv},  there is at most one root $t\in(-\infty,x_{**})$ of the equation $B'(t)=0$ or, equivalently, of the equation $\pl\al=p$. 
Since $\ql$ is such a root, one obtains \eqref{eq:ql<x**} -- in the case when $x_{**}<x_*$. 
%
Relations \eqref{eq:ql<x**} hold in the remaining case when $x_{**}=x_*$, since, as established above, $\ql\in(-\infty,x_*)$. 
Thus, the first two sentences of part~\eqref{ql,p>p*} of Proposition~\ref{prop:ql} are verified. 

Concerning the third sentence there, assume that indeed $t\in(-\infty,x_*)$. 
Then the function $g$ defined by the formula $g(\ga):=\ln\E\big((X-t)^\ga|X>t\big)$ is convex on $[0,\infty)$, with $g(0)=0$.
So, by \eqref{eq:pl}, 
\begin{equation}\label{eq:g ineq} 
	\frac1\al\ln\,\frac{\pl\al}{\P(X>t)}
	=g(\al-1)-\Big(\frac{\al-1}\al\,g(\al)+\frac1\al\,g(0)\Big)\le0, 
\end{equation}
which shows that indeed $\pl\al\le\P(X>t)$. 
If $t\in(-\infty,x_{**})$, then the interval $(t,\infty)$ contains at least two distinct points of $\supp X$, whence  
the function $g$ is strictly convex on $[0,\infty)$, which makes the inequality in \eqref{eq:g ineq} strict, so that the strict inequality $\pl\al<\P(X>t)$ holds. 
%
%
The inequality $\P(X>t)\le P_0(X;t)$ obviously follows from the equality in \eqref{eq:0<al}. 
The relation $\pl\al\to\P(X>t)$ as $\al\downarrow1$ easily follows from \eqref{eq:pl} by dominated convergence. 

Thus, part~\eqref{ql,p>p*} of Proposition~\ref{prop:ql} is completely proved. 

\eqref{ql decr in p} Let us turn to part~\eqref{ql decr in p} of the proposition. 
That $\ql=x_*$ for $p\in(0,p_*]\cap(0,1)$ follows immediately from part~\eqref{ql,p<p*} of Proposition~\ref{prop:ql}. 
Also, by the first equality in \eqref{eq:ql<Q0} and \eqref{eq:Q decr in p}, $\ql$ is nonincreasing in $p\in(0,1)$ if $\al=1$. 

Assume now that $\al\in(1,\infty)$. 
Note that $\pl\al$ is continuous in $t\in(-\infty,x_*)$. 
So, if $x_{**}<x_*$, then $\pl\al\underset{t\uparrow x_{**}}\longrightarrow{{}_{\al-1}P(X;x_{**})}=p_*$. 
On the other hand, by the inequality $\pl\al\le\P(X>t)$ 
for all $t\in(-\infty,x_*)$, one has 
$\pl\al\underset{t\uparrow x_*}\longrightarrow0=p_*$ if $p_*=0$. 
In the remaining case, when $x_{**}=x_*$ and $p_*>0$, for all $\ga\in(0,\infty)$, all $X\in\XX_\ga$, all $t\in(-\infty,x_*)$, and some $\th_{\ga,t}\in[0,1]$ one can write 
$$\E(X-t)_+^\ga=p_*(x_*-t)^\ga+\th_{\ga,t}\P(t<X<x_*)(x_*-t)^\ga
\underset{t\uparrow x_*}\sim p_*(x_*-t)^\ga,$$whence $\pl\al\sim p_*$ as $t\uparrow x_*=x_{**}$. 
Thus, in any case $\pl\al\underset{t\uparrow x_{**}}\longrightarrow p_*$. 

Next, for all $\ga\in(0,\infty)$, all $X\in\XX_\ga$, and all $t\in(-\infty,0)$, by the monotone convergence  
$\E(X-t)_+^\ga=|t|^\ga\E(1+X/|t|)_+^\ga\underset{t\downarrow-\infty}\sim|t|^\ga$, whence 
$\pl\al\underset{t\downarrow-\infty}\longrightarrow1$. 
So, the function $t\longmapsto\,\pl\al$ maps the interval $(-\infty,x_{**})$ continuously onto the interval $(p_*,1)$. Moreover, by part~\eqref{ql,p>p*} of Proposition~\ref{prop:ql}, this function is one-to-one. Furthermore, by \eqref{eq:B'}, this function is nonincreasing, because $B$ is convex and hence $B'$ is nondecreasing. 
It then follows 
that this function is strictly decreasing on the interval $(-\infty,x_{**})$. 
One concludes that the function $(p_*,1)\ni p\longmapsto\,\ql=t_{\al,p}\in(-\infty,x_{**})$ is a bijection, which is the strictly decreasing continuous inverse to the strictly decreasing continuous bijection $(-\infty,x_{**})\ni t\longmapsto\,\pl\al\in(p_*,1)$. 
This completes the proof of part~\eqref{ql decr in p} of Proposition~\ref{prop:ql}. 

%
%

\eqref{ql decr in al} Consider now part~\eqref{ql decr in al} of Proposition~\ref{prop:ql}. 
The first equality in \eqref{eq:ql<Q0} is \eqref{eq:qlz=Q0}, and the second equality there follows by the definition of $Q(X;p)$ in \eqref{eq:Q_0}. 

The non-strict inequality in \eqref{eq:ql<Q0} is a trivial equality if $\al=1$. 
Take now any $\al\in(1,\infty)$ and any $t\in\big(Q_0(X;p),x_*\big)$. Then, again by \eqref{eq:Q_0}, $\P(X\ge t)<p$. Combining this with the inequality $\pl\al\le\P(X>t)$ for $t\in(-\infty,x_*)$, established in part~\eqref{ql,p>p*} of Proposition~\ref{prop:ql}, one has $\pl\al<p$ and hence, by \eqref{eq:B'}, $B'(t)>0$, for all $t\in\big(Q_0(X;p),x_*\big)$. 
Also, $B(t)=t$ 
for all $t\in[x_*,\infty)$. 
So, $B(t)$ is strictly increasing in 
$t\in\big(Q_0(X;p),\infty)$ and therefore,  
in view of \eqref{eq:ql}, $\ql\le Q_0(X;p)$, which completes the proof of \eqref{eq:ql<Q0}. 

In the case when $p\in(0,p_*]\cap(0,1)$, 
$1\le\al<\infty$, and $X\in\XX_\al$, one has $\ql=x_*$ by part~\eqref{ql,p<p*} of Proposition~\ref{prop:ql}. 

Assume now that $p\in(p_*,1)$, 
$1<\al<\be<\infty$, and $X\in\XX_\be$. 
Take first any $t\in(-\infty,x_*)$. The condition $p\in(p_*,1)$ implies $p_*<1$, and so, $\P\big((X-t)_+=c\big)\ne1$ for any $c\in\R$. 
Hence, the function $h$ defined by the formula $h(\ga):=h_t(\ga):=\ln\E(X-t)_+^\ga$ is strictly convex on the 
interval $(0,\be]$. 
Noting also that $\al-1<\be-1<\be$ and $\al-1<\al<\be$, and recalling \eqref{eq:pl}, one can now write 
\begin{align*}
	&\ln\,\frac{\pl\be}{\pl\al} \\
	&=\be h(\be-1)+(\al-1)h(\al)-(\be-1)h(\be)-\al h(\al-1) \\ 
	&<\be\,\frac{h(\al-1)+(\be-\al)h(\be)}{\be-(\al-1)}
	+(\al-1)\,\frac{(\be-\al)h(\al-1)+h(\be)}{\be-(\al-1)}-(\be-1)h(\be)-\al h(\al-1)=0, 
\end{align*}
so that $\pl\be<\pl\al$, for each $t\in(-\infty,x_*)$. 
Using now part~\eqref{ql,p>p*} of Proposition~\ref{prop:ql}, one sees that 
\begin{equation}\label{eq:pl>pl}
	{}_{\al-1}P(X;t_{\be,p})>{}_{\be-1}P(X;t_{\be,p})=p. 
\end{equation}
On the other hand, by the convexity of $B(t)$ in $t$, 
$B'(t)$ is 
nondecreasing in $t\in(-\infty,X_*)$ and hence, by \eqref{eq:B'}, $\pl\al$ is 
nonincreasing in $t\in(-\infty,
x_*)$. 
Therefore, one would have $p={}_{\al-1}P(X;t_{\al,p})\ge{}_{\al-1}P(X;t_{\be,p})$ if it were true that $t_{\al,p}\le t_{\be,p}$, which would then contradict \eqref{eq:pl>pl}. 
Thus, $t_{\be,p}<t_{\al,p}$. 
In view of part~\eqref{ql,p>p*} of Proposition~\ref{prop:ql}, the latter inequality means that $\qlbe<\ql$ -- assuming that $p\in(p_*,1)$, $1<\al<\be<\infty$, and $X\in\XX_\be$. Therefore and by the inequality in \eqref{eq:ql<Q0}, 
one can write $\qlbe<{}_{\tilde\be-1}Q(X;p)\le\qlz$ for any $\tilde\be\in(1,\be)$, so that 
the inequality $\qlbe<\ql$ still holds if $p\in(p_*,1)$, $1=\al<\be<\infty$, and $X\in\XX_\be$. 

Concerning part~\eqref{ql decr in al} of Proposition~\ref{prop:ql}, it remains to show that, if $p\in(p_*,1)$ and $X\in\XX_\infty$, then $\ql\underset{\al\uparrow\infty}\longrightarrow-\infty$. 
To obtain a contradiction, assume the contrary: $p\in(p_*,1)$ and $X\in\XX_\infty$ but $\ql$ does not converge to $-\infty$ as $\al\uparrow\infty$. 
Because $\ql$ is nonincreasing in $\al\in[1,\infty)$ and in view of \eqref{eq:ql<x**}, 
$t_{\al,p}=\ql\underset{\al\uparrow\infty}\longrightarrow t_*$ for some $t_*\in(-\infty,x_{**})$ and hence $Q_\al(X;p)=B_\al(X;p)(t_{\al,p})
=t_{\al,p}+p^{-1/\al}\|(X-t_{\al,p})_+\|_\al
\underset{\al\uparrow\infty}\longrightarrow t_*+\|(X-t_*)_+\|_\infty
=t_*+(x_*-t_*)_+=x_*$. 
On the other hand, by 
the stability of $Q_\al(X;p)$ in $\al$ stated in Theorem~\ref{th:coher} and \eqref{eq:inv}, 
$Q_\al(X;p)
\underset{\al\uparrow\infty}\longrightarrow Q_\infty(X;p)<x_*$. 
This contradiction 
completes the proof of part~\eqref{ql decr in al} of Proposition~\ref{prop:ql}. 

\eqref{ql consist} If $X=c$ for some $c\in\R$ then $x_*=c$ and $p_*=1$, so that part~\eqref{ql consist} of Proposition~\ref{prop:ql} follows immediately from its part~\eqref{ql,p<p*}. 

\eqref{ql pos-hom} Concerning the positive homogeneity of $\ql$ stated in part~\eqref{ql pos-hom} of Proposition~\ref{prop:ql}, the case $\ka=0$ follows immediately by the consistency of $\ql$ and \eqref{eq:ql in R}.  
%
The 
case of any real $\ka>0$ follows by \eqref{eq:ql} and the identity $B_\al(\ka X;p)(\ka t)=\ka B_\al(X;p)(t)$ for all $t\in\R$.   

\eqref{ql tr-inv} The translation invariance of $\ql$ stated in part~\eqref{ql tr-inv} of Proposition~\ref{prop:ql}, follows immediately by \eqref{eq:ql} and the identity $B_\al(X+c;p)(t+c)=B_\al(X;p)(t)+c$ for all $t\in\R$. 

\eqref{ql part mono} If $X\le c$ for some $c\in\R$, then $x_*\le c$, and so, by part~\eqref{ql comp} of Proposition~\ref{prop:ql}, $\ql\le x_*\le c$. On the other hand, by the consistency property of $\ql$ stated in part~\eqref{ql consist} of Proposition~\ref{prop:ql}, ${}_{\al-1}Q(c;p)=c$. 
So, indeed $X\le c$ implies $\ql\le{}_{\al-1}Q(c;p)$. 
The inequality $\ql\le{}_{\al-1}Q(X+c;p)$ for any $c\in[0,\infty)$ follows immediately from the translation invariance of $\ql$ stated in part~\eqref{ql tr-inv} of Proposition~\ref{prop:ql}. 
Thus, part~\eqref{ql part mono} of Proposition~\ref{prop:ql} is checked. 

\eqref{ql not mono} To verify part~\eqref{ql not mono} of Proposition~\ref{prop:ql}, take indeed any $\al\in(1,\infty)$ and any $p\in(0,1)$. 
Take then any $p_*\in(0,p)$, so that $p\in(p_*,1)$. 
Let $Y=0$ and let $X$ be a r.v.\ taking values $0$ and $1$ with probabilities $1-p_*$ and $p_*$, respectively, so that $Y\le X$ and, of course, $X$ and $Y$ are in $\XX_\al$ for any $\al\in[0,\infty]$. Also, by \eqref{eq:x_**}, $x_{**,X}=0$. 
So, in view of relations \eqref{eq:ql<x**} and part~\eqref{ql consist} of Proposition~\ref{prop:ql}, 
$\ql<x_{**,X}=0={}_{\al-1}Q(0;p)={}_{\al-1}Q(Y;p)$. 
If now $Z=1$, then $Y\le Z$ and ${}_{\al-1}Q(Y;p)=0<1={}_{\al-1}Q(Z;p)$. 
Thus, $Y\le X$ and $Y\le Z$, whereas $\ql<{}_{\al-1}Q(Y;p)<{}_{\al-1}Q(Z;p)$, which means that 
${}_{\al-1}Q(\cdot;p)$ is not monotonic. 

\eqref{ql not subadd} The case $\al=1$ of part~\eqref{ql not subadd} of Proposition~\ref{prop:ql} follows, in view of \eqref{eq:qlz=Q0}, because $Q_0(X;p)$ is well known not to be subadditive or convex. 
Take now any $\al\in(1,\infty)$ and any $p\in(0,1)$. 
To verify part~\eqref{ql not subadd} of Proposition~\ref{prop:ql} for such $\al$ and $p$, let us use an idea from \cite{rocka-ur-zab}, which allows one to show that the non-subadditivity follows from the non-monotonicity and partial monotonicity.  
Thus, 
let $Y$ and $X$ be as in the above proof of part~\eqref{ql not mono} of Proposition~\ref{prop:ql}. Let $V:=Y-X$, so that $V\le0$ and hence, by  part~\eqref{ql part mono} of Proposition~\ref{prop:ql}, ${}_{\al-1}Q(V;p)\le0$. 
It follows that ${}_{\al-1}Q(V;p)+\ql\le\ql<{}_{\al-1}Q(Y;p)={}_{\al-1}Q(V+X;p)$. 
So, ${}_{\al-1}Q(\cdot;p)$ is not subadditive. Since${}_{\al-1}Q(\cdot;p)$ is positive homogeneous, it is not convex either. 

The proof of Proposition~\ref{prop:ql} is now quite complete. 
\end{proof}

\begin{proof}[Proof of Proposition~\ref{prop:gini}]
To prove the ``if'' part of the proposition, suppose that $H$ is $\frac12$-Lipschitz and take any r.v.'s $X$ and $Y$ such that $X\st Y$. We have to show that then $R_H(X)\le R_H(Y)$.  By \eqref{eq:st iff} and because $R_H(X)$ depends only on the distribution of $X$, w.l.o.g.\ $X\le Y$. Let $(\tX,\tY)$ be an independent copy of the pair $(X,Y)$. 
Then, by \eqref{eq:R_H}, the $\frac12$-Lipschitz condition, the triangle inequality, and the condition $X\le Y$, 
\begin{align*}
	R_H(X)-R_H(Y)&=\E(X-Y)+\E H(|X-\tX|)-\E H(|Y-\tY|) \\ 
	&\le\E(X-Y)+\tfrac12\,\E(|X-\tX|-|Y-\tY|) \\ 
	&\le\E(X-Y)+\tfrac12\,\E|X-\tX-Y+\tY| \\ 
	&\le\E(X-Y)+\tfrac12\,\E(|X-Y|+|\tX-\tY|) \\ 
	&=\E(X-Y)+\E|X-Y|=\E(X-Y)+\E(Y-X)=0, 	
\end{align*}
so that the ``if'' part of Proposition~\ref{prop:gini} is verified. 

To prove the ``only if'' part of the proposition, suppose that $R_H(X)$ is nondecreasing in $X$ with respect to the stochastic dominance of order $1$ and take any $x$ and $y$ in $[0,\infty)$ such that $x<y$. It is enough to show that then $|H(x)-H(y)|\le\frac12\,(y-x)$. 
Take also an arbitrary $p\in(0,1)$. 
Let 
$X$ and $Y$ be such r.v.'s that $\P(X=0)=1$ if $x=0$, 
$\P(X=x)=p=1-\P(X=0)$ if $x\in(0,\infty)$, and $\P(Y=y)=p=1-\P(Y=0)$. Then $X\st Y$, whence, by \eqref{eq:R_H},  
$
	0\ge\tfrac1p\,[R_H(X)-R_H(Y)]=x-y+2(1-p)[H(x)-H(y)], 
$ 
which yields $H(x)-H(y)\le\frac1{2(1-p)}\,(y-x)$ for an arbitrary $p\in(0,1)$ and hence 
\begin{equation}\label{eq:H(x)-H(y)}
	H(x)-H(y)\le\tfrac12\,(y-x). 
\end{equation}
Similarly, letting now $X$ and $Y$ be such r.v.'s that $\P(X=-y)=p=1-\P(X=0)$,  $\P(Y=0)=1$ if $x=0$, and $\P(Y=-x)=p=1-\P(Y=0)$ if $x\in(0,\infty)$, one has 
$X\st Y$ and hence   
$
	0\ge\tfrac1p\,[R_H(X)-R_H(Y)]=-y+x+2(1-p)[H(y)-H(x)], 
$ 
which yields 
$H(y)-H(x)\le\tfrac12\,(y-x)$. 
Thus, by \eqref{eq:H(x)-H(y)}, $|H(x)-H(y)|\le\frac12\,(y-x)$. 
\end{proof}

\begin{proof}[Proof of Proposition~\ref{prop:gini coher}]
To prove the ``if'' part of the proposition, suppose that $H=\ka\id$ for some $\ka\in[0,\frac12]$. 
We have to check that then $R_H(X)$ has the translation
invariance, subadditivity, positive homogeneity, and monotonicity properties and thus is coherent. 
As noted in the discussion in Section~\ref{risk}, $R_H(X)$ is translation
invariant for any function $H$. It is also obvious that $R_{\ka\id}(X)$ is positive homogeneneous for any $\ka\in[0,\infty)$. 
Next, as also noted in the discussion in Section~\ref{risk}, $R_H(X)$ is convex in $X$ whenever the function $H$ is convex and nondecreasing. Indeed, let then $(\tX_0,\tX_1)$ be an independent copy in distribution of a pair $(X_0,X_1)$ of r.v.'s, and introduce $X_\la:=(1-\la)X_0+\la X_1$ and $\tX_\la:=(1-\la)\tX_0+\la\tX_1$, for an arbitrary $\la\in(0,1)$. Then 
\begin{align*}
	R_H(X_\la)&=\E X_\la+\E H(|X_\la-\tX_\la|) \\ 
	&=(1-\la)\E X_0+\la\E X_1+\E H\big(|(1-\la)(X_0-\tX_0)+\la(X_1-\tX_1)|\big) \\
	&\le(1-\la)\E X_0+\la\E X_1+\E H\big((1-\la)|X_0-\tX_0|+\la|X_1-\tX_1|\big) \\ 
	&\le(1-\la)\E X_0+\la\E X_1+(1-\la)\E H(|X_0-\tX_0|)+\la\E H(|X_1-\tX_1|) \\ 
	&=(1-\la)R_H(X_0)+\la R_H(X_1). 
\end{align*}
So, the convexity property of $R_H(X)$ is verified, which, as noted earlier, is equivalent to the subadditivity given the positive homogeneity. 
Now, to finish the proof of ``if'' part of Proposition~\ref{prop:gini coher}, it remains to notice that the monotonicity property of $R_{\ka\id}(X)$ for $\ka\in[0,\frac12]$ follows immediately from Proposition~\ref{prop:gini}. 

To prove the ``only if'' part of the proposition, suppose that the function $H$ is such that $R_H(X)$ is coherent and thus positive homogeneous, monotonic, and subadditive (as noted before, $R_H(X)$ is translation
invariant for any $H$). 
Take any $p\in(0,1)$ and let $X$ here be a r.v.\ such that $\P(X=1)=p=1-\P(X=0)$. 
Then, by the positive homogeneity, for any real $u>0$ one has 
\begin{equation*}
	0=R_H(u X)-u R_H(X)=aA+B,  
\end{equation*}
where $B:=(1-u)H(0)$, $A:=H(u)-uH(1)-B$, and $a:=2p(1-p)$, so that 
the range of values of $a$ is the entire interval $(0,\frac12)$ as $p$ varies in the interval $(0,1)$. Thus, $aA+B=0$ for all $a\in(0,\frac12)$. 
On the other hand, $aA+B$ is a polynomial in $a$, with coefficients $A$ and $B$ not depending on $a$. It follows that $A=B=0$, which yields $H(u)=uH(1)$ for all $u\in(0,\infty)
$ and $H(0)=0$. Hence, $H(u)=uH(1)$ for all real $u\ge0$. 
In other words, $H=\ka\id$, with $\ka:=H(1)$. 
Then the monotonicity property and Proposition~\ref{prop:gini} imply that $|\ka|\le\frac12$. 
It remains to show that necessarily $\ka\ge0$. Take here $X$ and $Y$ to be independent standard normal r.v.'s. Then, by the subadditivity, 
\begin{equation*}
	2\ka\E|X|=R_{\ka\id}(X+Y)\le R_{\ka\id}(X)+R_{\ka\id}(Y)=2\sqrt2\,\ka\E|X|, 
\end{equation*}
whence indeed $\ka\ge0$. 
\end{proof}

\bibliographystyle{abbrv}


\bibliography{C:/Users/Iosif/Dropbox/mtu/bib_files/citations12.13.12}

\end{document}